\documentclass[a4paper]{article}

\usepackage[all]{xy}\usepackage[latin1]{inputenc}       
\usepackage[dvips]{graphics,graphicx}
\usepackage{amsfonts,amssymb,amsmath,xcolor,mathrsfs, amstext}
\usepackage{amsbsy, amsopn, amscd, amsxtra, amsthm, authblk, enumerate,dsfont}
\usepackage{upref}
\usepackage{geometry}
\geometry{left=2.5cm,right=2.5cm,top=3cm,bottom=3cm}
\usepackage[displaymath]{lineno}
\usepackage{float}
\usepackage{bbm}
\usepackage{multicol}
\usepackage{comment}
\usepackage{longtable, booktabs}

\allowdisplaybreaks

\usepackage[colorlinks,
            linkcolor=blue,
            anchorcolor=green,
            citecolor=blue
            ]{hyperref}

\numberwithin{equation}{section}

\newtheorem{theorem}{Theorem}[section]
\newtheorem{lemma}[theorem]{Lemma}
\newtheorem{remark}[theorem]{Remark}
\newtheorem{conjecture}[theorem]{Conjecture}
\newtheorem{proposition}[theorem]{Proposition}
\newtheorem*{proposition*}{Proposition}

\newtheorem{definition}[theorem]{Definition}
\newtheorem*{definitions*}{Definitions}

\newtheorem*{example*}{\bf Example}

\newtheorem{prob}[theorem]{\bf Problem}

\numberwithin{equation}{section}


\newcommand\dd{\mathop{}\!\mathrm{d}} 

\newcommand{\eqb}{\begin{equation}}
\newcommand{\eqe}{\end{equation}}

\title{Schramm-Loewner evolution contains a topological Sierpi\'nski carpet when $\kappa$ is close to 8}
\author{Haoyu Liu\thanks{Peking University} \qquad \qquad Zijie Zhuang\thanks{University of Pennsylvania}}

\begin{document}

\maketitle

\begin{abstract}
    We consider the Schramm-Loewner evolution (SLE$_\kappa$) for $\kappa \in (4,8)$, which is the regime where the curve is self-intersecting but not space-filling. We show that there exists $\delta_0>0$ such that for $\kappa \in (8 - \delta_0,8)$, the range of an SLE$_\kappa$ curve almost surely contains a topological Sierpi\'nski carpet. Combined with a result of Ntalampekos (2021), this implies that in this parameter range, SLE$_\kappa$ is almost surely conformally non-removable, and the conformal welding problem for SLE$_\kappa$ does not have a unique solution. Our result also implies that for $\kappa \in (8 - \delta_0,8)$, the adjacency graph of the complementary connected components of the SLE$_\kappa$ curve is disconnected.
\end{abstract}

\setcounter{tocdepth}{1}
\tableofcontents

\section{Introduction}\label{sec:intro}

The Schramm-Loewner evolution (SLE$_\kappa$) is a one-parameter family of random fractal curves connecting two boundary points in a simply connected domain. It was introduced by Schramm~\cite{Schramm-SLE} to describe the conjectural scaling limits of interfaces in two-dimensional statistical mechanics models. Since then, many such models have been proved to converge to SLE$_\kappa$ in the scaling limit, see, e.g.~\cite{Smirnov-per, LSW-looperased, Smirnov-ising}. For $\kappa>0$, SLE$_\kappa$ in the upper half-plane $\mathbb{H}$ from $0$ to $\infty$ is defined via a family of conformal maps $(g_t)_{t \geq 0}$ solving the Loewner equation:
\begin{equation}\label{eq:loewner}
\partial_t g_t(z) = \frac{2}{g_t(z) - W_t}, \quad g_0(z) = z,
\end{equation}
where $W_t = \sqrt{\kappa} B_t$ and $B_t$ is a standard Brownian motion. For $z \in \overline{\mathbb{H}}$, let $\tau(z) = \inf \{t\geq 0: \lim_{s \uparrow t} |g_s(z) - W_s| = 0\}$ and let $K_t = \{z \in \overline{\mathbb{H}}: \tau(z) \leq t \}$ for $t \geq 0$, which is an increasing family of compact sets. It was shown in~\cite{Rohde-Schramm-basic, LSW-looperased} that $(K_t)_{t \geq 0}$ is almost surely generated by a unique continuous curve $\eta:[0,\infty) \to \overline{\mathbb{H}}$ such that for each $t \geq 0$, the set $K_t$ is the union of $\eta([0, t])$ and the bounded connected components of $\overline{\mathbb{H}} \setminus \eta([0, t])$. The parameter $\kappa$ determines the roughness of SLE$_\kappa$: the random curve is simple for $\kappa \leq 4$, self-intersecting but not space-filling for $\kappa \in (4,8)$, and space-filling for $\kappa \geq 8$~\cite{Rohde-Schramm-basic}. Moreover, the Hausdorff dimension of the range of an SLE$_\kappa$ curve is $\min\{ 1 + \kappa/8, 2\}$~\cite{Rohde-Schramm-basic, Beffara-dimension}.

Since the introduction of SLE by Schramm, many other representations of SLE$_\kappa$ have appeared. For instance, in~\cite{SS-contour, Dubedat-SLE-GFF, IG1, IG4}, SLE$_\kappa$ curves are realized as flow lines of the Gaussian free field (GFF). Also SLE$_\kappa$ curves are characterized by the conformal restriction property and admit rich algebraic structures~\cite{LSW-restriction, Werner-sle-loop, baverez2024cft}. In addition, SLE$_\kappa$ curves play a central role in the study of Liouville quantum gravity (LQG), which describes the scaling limit of random planar maps~\cite{Sheffield-zipper, DMS21-LQG-MRT}. In particular, SLE$_\kappa$ arises as the welding interface between two conformally welded LQG disks, as we now recall. Let $\widehat{\mathbb{C}}$ be the Riemann sphere. Suppose $\mathbb{D}_1$ and $\mathbb{D}_2$ are two copies of the unit disk, and let $\phi: \partial \mathbb{D}_1 \to \partial \mathbb{D}_2$ be a homeomorphism. A \textit{conformal welding} with \textit{welding homeomorphism} $\phi$ corresponds to a Jordan loop $\eta$ on $\widehat{\mathbb{C}}$ and a pair of conformal maps $\psi_i$ mapping $\mathbb{D}_i$ to the two connected components of $\widehat{\mathbb{C}} \setminus \eta$ such that $\phi = \psi_2^{-1} \circ \psi_1$; see Figure~\ref{fig:weld} (left). Given a homeomorphism $\phi$, it is not obvious in general whether a conformal welding exists or whether it is unique up to a M\"obius transformation. Sheffield~\cite{Sheffield-zipper} showed that if one endows $\mathbb{D}_1$ and $\mathbb{D}_2$ with the random geometry of LQG and defines $\phi$ by identifying the LQG quantum boundary length measures on $\partial \mathbb D_1$ and $\partial \mathbb D_2$, then a conformal welding exists and the interface is an SLE$_\kappa$-type curve for $\kappa \in (0,4)$.\footnote{Sheffield's original conformal welding is for chordal SLE$_\kappa$ where one glues $\mathbb{D}_1$ and $\mathbb{D}_2$ along two boundary arcs to obtain a chordal SLE$_\kappa$. The version we presented here is for SLE$_\kappa$ loop which was proved in~\cite{AHS-loop} based on Sheffield's result~\cite{Sheffield-zipper}. The case for $\kappa \in (4,8)$ was proved in~\cite{ACSW-loop}.} Sheffield's result was extended to the case $\kappa = 4$~\cite{HP-critical-welding}; see also~\cite{Baverez-welding} for a new proof of Sheffield's welding result for $\kappa \in (0,4]$. For $\kappa > 4$, several conformal welding results were established in~\cite{DMS21-LQG-MRT}. In particular, SLE$_\kappa$-type curves for $\kappa \in (4,8)$ arise as the welding interface when one glues together two independent stable looptrees, where each loop is replaced with a conditionally independent quantum disk; see Figure~\ref{fig:weld} (right).

\begin{figure}[htbp]
    \centering
    \begin{minipage}{0.47\textwidth}
    \includegraphics[width=\textwidth]{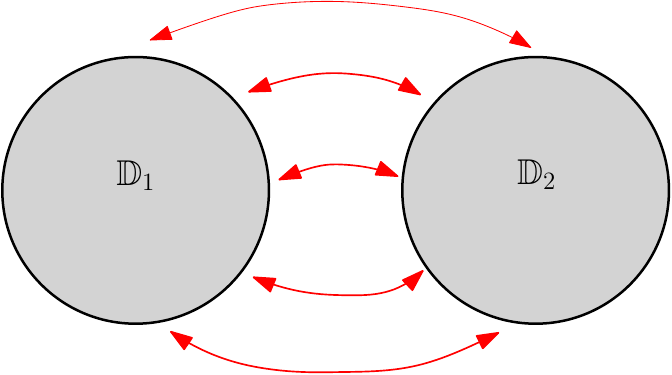}
    \end{minipage}
    \hfill
    \begin{minipage}{0.47\textwidth}
    \includegraphics[width=\textwidth]{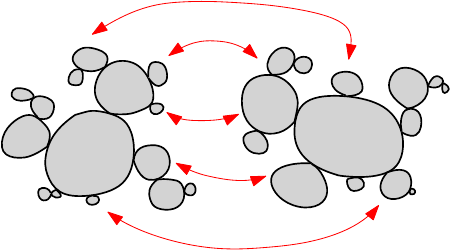}
    \end{minipage}
    \caption{SLE$_\kappa$ curves arise as conformal welding interfaces~\cite{Sheffield-zipper, DMS21-LQG-MRT}. \textbf{Left:} The case $\kappa \in (0,4]$. \textbf{Right:} The case $\kappa \in (4,8)$.}
    \label{fig:weld}
\end{figure}

The conformal removability of the welding interface is a sufficient condition for the uniqueness of conformal welding. Recall that a compact set $K \subset \widehat{\mathbb{C}}$ is said to be \emph{conformally removable} if every homeomorphism $\phi: \widehat{\mathbb{C}} \to \widehat{\mathbb{C}}$ that is conformal on $\widehat{\mathbb{C}} \setminus K$ is conformal on all of $\widehat{\mathbb{C}}$, and thus M\"obius. In light of this, it is natural to ask whether SLE$_\kappa$ curves are conformally removable for different values of $\kappa$. 
In the case $\kappa \in (0,4)$, it was proved in~\cite{Rohde-Schramm-basic} that SLE$_\kappa$ is the boundary of a H\"older domain and such boundaries are conformally removable by a criterion of Jones and Smirnov~\cite{JS-removable}. For $\kappa = 4$, SLE$_4$ does not form the boundary of a H\"older domain and moreover it was shown in~\cite{KMS-SLE4regularity} that the condition in~\cite{JS-removable} for conformal removability does not hold for SLE$_4$. Recently,~\cite{KMS-removability-SLE4} proved the conformal removability of SLE$_4$ using a new criterion. For $\kappa \geq 8$, SLE$_\kappa$ curves have positive Lebesgue measure and are thus conformally non-removable. In the regime $\kappa \in (4,8)$, which is the remaining case, the complement of an SLE$_\kappa$ curve consists of a countable collection of simply connected components, making it more difficult to check the conformal removability~\cite[Question 8]{Sheffield-zipper}. However, in a recent work~\cite{KMS-removability-nonsimple}, the authors proved the conformal removability of SLE$_\kappa$ for those values of $\kappa$ for which the adjacency graph of the complementary connected components is connected (see Theorem~\ref{thm:disconnect} for details). This condition was shown in~\cite{GP20-connectivity} to hold for $\kappa \in (4,\kappa_0)$, where $\kappa_0 \approx 5.6158$.

In this paper, we will show that SLE$_\kappa$ is conformally non-removable when $\kappa$ is close to 8. We will use a criterion for conformal non-removability established in~\cite{Ntalampekos-carpet}. Recall that a topological Sierpi\'nski carpet $S\subset \widehat{\mathbb{C}}$ is a set that is homeomorphic to the standard Sierpi\'nski carpet. By a result of Whyburn~\cite{Whyburn-carpet}, a set $S \subset \widehat{\mathbb{C}}$ is a topological Sierpi\'nski carpet if and only if $S$ has empty interior and can be written as $S = \widehat{\mathbb{C}} \setminus \cup_{i=1}^\infty Q_i$, where $\{Q_i\}_{i \in \mathbb{N}}$ is a family of Jordan domains satisfying:
\begin{enumerate}[(a)]
    \item $\overline{Q}_i \cap \overline{Q}_j = \emptyset$ for all $i \neq j$.
    \item The spherical diameter of $Q_i$ converges to 0 as $i \to \infty$.
\end{enumerate}
The following result is taken from~\cite[Theorem 1.1]{Ntalampekos-carpet}.

\begin{theorem}[\cite{Ntalampekos-carpet}]\label{thm:carpet-non-remove}
    All topological Sierpi\'nski carpets are conformally non-removable.
\end{theorem}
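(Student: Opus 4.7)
Let $S = \widehat{\mathbb{C}} \setminus \bigcup_i Q_i$ be a topological Sierpi\'nski carpet. I aim to construct a non-M\"obius homeomorphism $\phi : \widehat{\mathbb{C}} \to \widehat{\mathbb{C}}$ that is conformal on $\bigcup_i Q_i$; such a $\phi$ is precisely a certificate of conformal non-removability of $S$. I begin by fixing a single tile, say $Q_1$, and selecting a non-trivial conformal automorphism $\sigma_1$ of $Q_1$: taking a Riemann map $\psi_1 : \mathbb{D} \to Q_1$ and a non-trivial M\"obius $\rho \in \mathrm{Aut}(\mathbb{D})$, I set $\sigma_1 := \psi_1 \circ \rho \circ \psi_1^{-1}$. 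By Carath\'eodory's theorem, $\sigma_1$ extends to a non-identity homeomorphism of $\overline{Q_1}$ acting non-trivially on $\partial Q_1 \subset S$. The main task is to extend $\sigma_1$ to a homeomorphism of $\widehat{\mathbb{C}}$ that is additionally conformal on every $Q_i$ with $i \geq 2$.

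\textbf{Iterative construction.} I would build $\phi$ as a uniform limit of homeomorphisms $\phi_n$, where $\phi_n$ is conformal on $Q_1, \ldots, Q_n$ and equals a Schoenflies-type extension of its boundary data elsewhere. Starting from a Schoenflies extension $\phi_1$ of $\sigma_1$, the inductive step from $\phi_n$ to $\phi_{n+1}$ would proceed in two substeps: (i) replace $\phi_n|_{Q_{n+1}}$ by a Riemann map $Q_{n+1} \to \phi_n(Q_{n+1})$, using the three real parameters of $\mathrm{Aut}(\mathbb{D})$ to align three chosen boundary values with those prescribed by $\phi_n|_{\partial Q_{n+1}}$; (ii) absorb the residual boundary discrepancy by a Schoenflies correction supported in a small neighborhood $U_{n+1}$ of $\overline{Q_{n+1}}$, chosen disjoint from $\overline{Q_j}$ for every $j \leq n$. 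Condition (a) ensures such a $U_{n+1}$ exists, and condition (b) lets me take its spherical diameter to be $O(\mathrm{diam}(Q_{n+1})) \to 0$, so $\phi_n$ and $\phi_n^{-1}$ both converge uniformly to a homeomorphism $\phi$ conformal on each $Q_i$. To see $\phi$ is not M\"obius, observe that any global M\"obius restricting to $\sigma_1$ on $Q_1$ must preserve $\partial Q_1$ setwise, forcing $\partial Q_1$ to be a round circle; this is prevented by appropriate initial choice of $Q_1$ and $\sigma_1$ (and even if all tiles happen to be round disks, $\sigma_1$ can be chosen so that no single M\"obius transformation preserves all the $Q_i$).

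\textbf{Main obstacle.} The principal technical hurdle lies in the inductive step: after aligning three boundary points, the Riemann map $Q_{n+1} \to \phi_n(Q_{n+1})$ can still differ from the prescribed $\phi_n|_{\partial Q_{n+1}}$ by a non-negligible amount, because $\phi_n|_{Q_{n+1}}$ is only a Schoenflies extension and need not be close to a conformal map. A distortion estimate for conformal maps between nearby Jordan domains, or a careful choice of alignment points, is required to show that the residual boundary mismatch is $O(\mathrm{diam}(Q_{n+1}))$, which then feeds the convergence of $\phi_n$ via (b). One must also verify that $\phi$ extends continuously to the ``fractal remainder'' $S \setminus \bigcup_i \partial Q_i$; here (b) is again decisive, since any approach to a point of the remainder through the boundaries $\partial Q_{i_n}$ forces $\mathrm{diam}(Q_{i_n}) \to 0$, so the perturbation dies in the limit. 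An alternative strategy would be to invoke a carpet analogue of Schramm's circle-packing uniformization to produce a conformal re-packing of all tiles simultaneously, sidestepping the iteration --- but proving the existence of a non-trivial such re-packing for arbitrary topological carpets is itself a substantial undertaking, likely at least as delicate as the original theorem.
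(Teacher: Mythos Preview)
This theorem is quoted from \cite{Ntalampekos-carpet}; the paper does not prove it but summarizes Ntalampekos's argument in the paragraph following the statement. That summary shows his route differs from yours in two structural ways. First, rather than perturbing a single carpet, he proves the stronger statement that for \emph{any} two carpets $S_1,S_2$ there is a homeomorphism of $\widehat{\mathbb{C}}$ taking $S_1$ to $S_2$ and conformal off $S_1$; non-removability then follows by choosing $S_2$ of a different Lebesgue measure, which forces the map to be non-M\"obius more cleanly than your automorphism-of-one-tile device. Second, his iteration is hierarchical rather than tile-by-tile: at each stage both carpets are partitioned into finitely many peripheral disks plus finitely many \emph{smaller sub-carpets}, conformal maps are assigned to matched peripheral disks, and these are extended to a global homeomorphism. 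All subsequent corrections are confined to the sub-carpets, whose diameters shrink on \emph{both} the source and target side because the partition is imposed on both $S_1$ and $S_2$.

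Your obstacle is genuine, and the fixes you propose do not address it. The boundary mismatch between the Riemann map $R$ and $\phi_n|_{\partial Q_{n+1}}$ is automatically at most $\mathrm{diam}(\phi_n(Q_{n+1}))$, so no distortion estimate for conformal maps is needed there; the real difficulty is that the bound $\|\phi_{n+1}-\phi_n\|_\infty \le \mathrm{diam}(\phi_n(U_{n+1}))$ involves the \emph{image} diameter, controlled only through $\phi_n$ itself. Your Schoenflies interpolations in the annuli $U_k\setminus Q_k$ can have unbounded distortion, and every later tile $Q_m\subset U_k\setminus Q_k$ inherits that distortion, so the image diameters need not be small and the argument is circular. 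For a point of $S$ lying in infinitely many $U_n$ there is no mechanism making $\phi_n(p)$ Cauchy. Ntalampekos's hierarchical scheme avoids this precisely because the target carpet $S_2$ is fixed in advance and partitioned in tandem: the regions hosting later corrections are sub-carpets of $S_2$ whose diameters go to zero by construction, independently of the distortion accumulated in the map so far.
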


In fact, for any two topological Sierpi\'nski carpets $S_1$ and $S_2$, Ntalampekos~\cite{Ntalampekos-carpet} constructed in a topological way a homeomorphism $\phi: \widehat{\mathbb{C}} \to \widehat{\mathbb{C}}$ such that $\phi(S_1) = S_2$ and $\phi$ is conformal on $\widehat{\mathbb{C}} \setminus S_1$. The construction relies on inductively partitioning Sierpi\'nski carpets into smaller Sierpi\'nski carpets along with finite collections of peripheral disks. At each stage, we assign conformal maps to matching peripheral disks and extend them to a global homeomorphism. These homeomorphisms converge to a limiting homeomorphism that maps each peripheral disk of $S_1$ conformally onto one of $S_2$, thereby satisfying the desired properties. This implies Theorem~\ref{thm:carpet-non-remove} since if $S_1$ has zero (resp.\ positive) Lebesgue measure, we may choose $S_2$ to have positive (resp.\ zero) Lebesgue measure so that $\phi$ cannot be M\"obius.

For $\kappa \in (4,8)$, let $\eta$ be an SLE$_\kappa$ curve from $0$ to $\infty$ in $\mathbb{H}$. Our main result is the following.

\begin{theorem}\label{thm:carpet}
     There exists $\delta_0>0$ such that for $\kappa \in (8 - \delta_0,8)$, the range of the curve $\eta$ almost surely contains a topological Sierpi\'nski carpet.
\end{theorem}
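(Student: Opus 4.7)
The plan is to verify Whyburn's criterion for a subset of the range of $\eta$: exhibit a countable family of Jordan domains $\{Q_i\}_{i\in\mathbb{N}}$ with pairwise disjoint closures and spherical diameters tending to zero, whose union covers $\widehat{\mathbb{C}}\setminus\eta([0,\infty))$. Then $S := \widehat{\mathbb{C}}\setminus\bigcup_i Q_i$ is automatically contained in the range of $\eta$ and qualifies as a topological Sierpi\'nski carpet.

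The natural first attempt is to take the $Q_i$ to be the connected components of $\widehat{\mathbb{C}}\setminus\eta$. For $\kappa\in(4,8)$ each such component is a Jordan domain (by the imaginary geometry description of SLE), and condition (b) holds almost surely since $\eta$ is continuous and the total area of the components in any bounded region is finite. The only obstruction to applying Whyburn directly is condition (a): two complementary components may share a boundary point at a multiple point of $\eta$. My strategy is to coarsen the family by grouping components into chain-adjacency clusters, that is, equivalence classes under the transitive closure of the relation $\overline U\cap\overline V\neq\emptyset$, and enclosing each cluster in a single slightly inflated Jordan domain so that distinct enclosures are separated. For this coarsening to yield a carpet almost surely, it suffices that:
\begin{enumerate}[(i)]
\item every chain-adjacency cluster is finite, so it can be enclosed by a Jordan curve with controlled diameter;
\item distinct clusters are uniformly separated at each scale, so the enclosures can be chosen with pairwise disjoint closures and diameters tending to zero.
\end{enumerate}

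The technical heart of the argument is (i). Adjacency of two complementary components corresponds to a triple point of $\eta$ lying on both of their boundary arcs. Using the conformal welding description of SLE$_\kappa$ from~\cite{DMS21-LQG-MRT}, in which complementary components correspond to jumps of an $\alpha$-stable process with $\alpha = 4/\kappa$ realized as quantum disks welded into looptrees, adjacency between two disks translates into a pinch point of the underlying looptree structure. For $\kappa$ close to $8$, and hence $\alpha$ close to $1/2$, I would estimate the expected number of components at a given scale that are adjacent to a typical component of comparable size via a second-moment computation in the LQG/stable-process framework, and show this expectation becomes arbitrarily small as $\kappa\uparrow 8$. A branching or Borel--Cantelli argument applied across dyadic scales then promotes the bound to almost-sure finiteness of every cluster. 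Once (i) is in hand, property (ii) and the diameter condition follow from a relatively routine separation argument together with the observation that only finitely many clusters have diameter exceeding any prescribed threshold.

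The main obstacle is (i). Small single-adjacency probabilities alone do not rule out long adjacency chains forming infinite clusters; excluding such chains requires a quantitative tail estimate on cluster sizes that degenerates as $\kappa\to 8$. Extracting this estimate from the stable looptree and quantum-disk welding description of SLE$_\kappa$ is precisely where the proximity of $\kappa$ to $8$ is needed, and I expect it to occupy the bulk of the technical work.
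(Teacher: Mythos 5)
Your proposal takes a fundamentally different route from the paper, and I believe it runs into a gap that the paper deliberately avoids. You propose to build the carpet from the actual complementary components of $\eta$, grouping them into chain-adjacency clusters and then enclosing each cluster in a slightly inflated Jordan domain. For this to work you need (i) every cluster to be finite and (ii) distinct clusters to be uniformly separated at every scale. Neither is established (or even asserted) in the paper, and both appear to be substantially harder than the statement you are trying to prove. Theorem~\ref{thm:disconnect} only shows the adjacency graph of bubbles is \emph{disconnected}; it says nothing about its connected components being finite, and there is no reason to expect finiteness of clusters from the imaginary geometry or looptree pictures—long adjacency chains are not ruled out merely because a single adjacency is unlikely, as you yourself acknowledge. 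More seriously, your condition (ii) is essentially the ``strong disconnectivity'' of the bubble adjacency graph—that closures of distinct clusters are pairwise disjoint—which the paper explicitly flags as an open problem (Problem~\ref{prob:sle-strong-disconnect}). Your second-moment computation would at best control adjacency probabilities at a fixed scale, but it does not address the accumulation/chaining effects across scales that make (i) and (ii) genuinely difficult. There is also a secondary issue: even for a finite cluster whose bubbles touch at isolated multiple points of $\eta$, the filling has pinch points on its boundary and is not a Jordan domain; your inflation step must then be carried out globally and consistently across infinitely many clusters at infinitely many scales without creating overlaps, which is itself a nontrivial construction that you leave unaddressed.

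The paper sidesteps all of this by never working with the bubbles directly. It constructs a set $\mathcal{K}\subset\eta$ of points $v$ at which the two boundary flow lines $\eta^L(v),\eta^R(v)$ meet only at $v$ (Lemma~\ref{lem:flow-line-chordal}), and then runs a Mandelbrot-type fractal percolation in the imaginary geometry coupling: a box at scale $\epsilon^n$ is ``$n$-good'' if flow lines started nearby do not intersect after escaping small neighborhoods (Definition~\ref{def:n-good}), and a multiscale $(m,n)$-good recursion (Definition~\ref{def:(m,n)-good}) plays the role of the $m$-good condition in Section~\ref{sec:fractal}. The probability that a box is $n$-good tends to $1$ as $\kappa\uparrow 8$ because SLE$_\kappa$ near $\kappa=8$ has no large bubbles in the bulk (Lemmas~\ref{lem:bubble-diameter}, \ref{lem:kappa-to-8}); this high-probability input is then pushed through the recursion by a GFF coarse-graining/Peierls argument (Lemma~\ref{lem:induction-bound}, Proposition~\ref{prop:whole-GFF}). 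On the good event, a carpet is produced inside $\mathcal{K}$ by removing small boxes around bad vertices, exactly as in the fractal percolation proof of Proposition~\ref{prop:carpet-fractal} (Proposition~\ref{prop:find-carpet}). Crucially, the resulting Jordan domains $Q_i$ are not bubbles or bubble clusters at all—they are boxes, whose disjointness and Jordan-domain structure are enforced by the construction, with no need to understand the topology of the actual bubble clusters. Your approach, if it could be completed, would yield a strictly stronger (and currently open) result; the paper's approach proves exactly what is needed while remaining agnostic about the fine adjacency structure of the bubbles.
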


The key intuition in proving Theorem~\ref{thm:carpet} comes from Mandelbrot's fractal percolation model~\cite{mandelbrot-fractal}. Mandelbrot introduced this model in 1974 to generate random self-similar fractal sets, which were shown to undergo phase transitions~\cite{CCD-fractal, CCD-3d-fractal}, as we will recall in a moment. Later on, fractal percolation was used to study various random fractal objects---for instance, Brownian motion~\cite{Peres-intersection}, Brownian loop soup and CLE~\cite{SW-CLE}, and log-correlated Gaussian fields~\cite{DGZ-thick-point}. Our work is the first to apply fractal percolation to non-simple SLE, which is itself a random fractal object. We now describe the model of fractal percolation in detail. Let $p \in (0,1)$ denote the retaining probability. We begin with the unit square $[0,1]^2$.
In each subsequent step, every retained box is divided into $3 \times 3$ disjoint sub-boxes of equal side length, and each sub-box is independently retained with probability $p$. When $p > 1/9$, the final retained set---defined as the intersection of the retained sets at all steps---is non-empty with positive probability. In~\cite{CCD-fractal}, a phase transition for the final retained set was established: there exists $p_c \in [\frac{1}{\sqrt{3}},1)$ such that when $p < p_c$, the set is almost surely totally disconnected, whereas when $p \in [p_c, 1)$, it contains a horizontal crossing of $[0,1]^2$ with positive probability. Using the idea from~\cite{CCD-fractal}, one can show that when $p$ is sufficiently close to 1, the final retained set contains a topological Sierpi\'nski carpet with positive probability; see Section~\ref{sec:fractal}. We will define a similar fractal percolation process within the framework of the imaginary geometry construction of SLE$_\kappa$~\cite{IG1,IG4}, which realizes SLE curves as flow lines of the Gaussian free field, so that the final retained set is contained in the trace of SLE$_\kappa$; see Section~\ref{sec:main-proof}. Applying the fractal percolation argument, we show that this final retained set in imaginary geometry contains a topological Sierpi\'nski carpet when $\kappa$ is sufficiently close to 8, analogous to the case of $p$ close to 1 in Mandelbrot's fractal percolation, thereby completing the proof of Theorem~\ref{thm:carpet}. However, since SLE is not independent across disjoint regions but exhibits correlations, we make suitable adjustments to the fractal percolation argument.

By definition, if $E \subseteq E'$ and $E$ is conformally non-removable, then $E'$ is also conformally non-removable. Combining this property with Theorems~\ref{thm:carpet} and~\ref{thm:carpet-non-remove}, we obtain the following result.

\begin{theorem}\label{thm:non-removable}
    For $\kappa \in (8 - \delta_0,8)$, the range of an SLE$_\kappa$ curve is almost surely conformally non-removable.
\end{theorem}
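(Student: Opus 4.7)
The plan is to derive Theorem~\ref{thm:non-removable} as an immediate corollary of Theorem~\ref{thm:carpet} and Theorem~\ref{thm:carpet-non-remove}, using the elementary monotonicity of conformal non-removability under inclusion of sets, which is precisely the property pointed out in the paragraph preceding the statement. The proof is essentially a one-page deduction; all of the substantive work lies in Theorem~\ref{thm:carpet}.

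First I would record the monotonicity observation: if $E \subseteq E' \subseteq \widehat{\mathbb{C}}$ and $E$ is conformally non-removable, then $E'$ is also conformally non-removable. Indeed, by hypothesis there is a homeomorphism $\phi \colon \widehat{\mathbb{C}} \to \widehat{\mathbb{C}}$ which is conformal on $\widehat{\mathbb{C}} \setminus E$ but is not a M\"obius transformation. Since $\widehat{\mathbb{C}} \setminus E' \subseteq \widehat{\mathbb{C}} \setminus E$, the same $\phi$ is automatically conformal on the smaller open set $\widehat{\mathbb{C}} \setminus E'$, and it is still non-M\"obius; hence $\phi$ witnesses the non-removability of $E'$.

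Next I would apply this in the SLE setting. Fix $\kappa \in (8 - \delta_0, 8)$ and let $K$ denote the range of the SLE$_\kappa$ curve $\eta$, regarded as a closed (and hence compact) subset of $\widehat{\mathbb{C}}$ after adjoining the point at infinity. By Theorem~\ref{thm:carpet}, on an event of probability one there exists a topological Sierpi\'nski carpet $S \subseteq K$. By Theorem~\ref{thm:carpet-non-remove}, the carpet $S$ is conformally non-removable. Applying the monotonicity observation with $E = S$ and $E' = K$ yields that $K$ is almost surely conformally non-removable, which is precisely the conclusion of Theorem~\ref{thm:non-removable}.

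There is no genuine obstacle in this deduction: it is a two-line corollary once Theorem~\ref{thm:carpet} and Ntalampekos's Theorem~\ref{thm:carpet-non-remove} are combined with the set-theoretic monotonicity of non-removability. The real content of the paper---the construction of a fractal percolation scheme inside the imaginary geometry coupling of SLE$_\kappa$ that produces a topological Sierpi\'nski carpet inside $\eta$ when $\kappa$ is sufficiently close to $8$---is devoted to proving Theorem~\ref{thm:carpet} itself, and that is where the main difficulty would sit.
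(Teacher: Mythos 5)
Your proposal is correct and follows exactly the paper's own deduction: the paper likewise observes that conformal non-removability is monotone under inclusion ($E \subseteq E'$ with $E$ non-removable forces $E'$ non-removable) and combines this with Theorems~\ref{thm:carpet} and~\ref{thm:carpet-non-remove}. Nothing to add.
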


Theorem~\ref{thm:carpet}, together with a refined result in~\cite{Ntalampekos-carpet}, implies the non-uniqueness of conformal welding for SLE$_\kappa$ in the same parameter range.

\begin{theorem}\label{thm:welding}
    For $\kappa \in (8-\delta_0,8)$, the solution to the conformal welding problem for SLE$_\kappa$ is almost surely not unique.
\end{theorem}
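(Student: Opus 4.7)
The plan is to use Theorem~\ref{thm:carpet} together with the construction of \cite{Ntalampekos-carpet} underlying Theorem~\ref{thm:carpet-non-remove} to produce two conformally inequivalent welding triples realizing the same welding homeomorphism.

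First, on the almost sure event provided by Theorem~\ref{thm:carpet}, there is a topological Sierpi\'nski carpet $S_1 \subseteq \eta$, and since $\kappa < 8$ the set $S_1$ has zero planar Lebesgue measure. I would fix an auxiliary topological Sierpi\'nski carpet $S_2 \subset \widehat{\mathbb{C}}$ of positive Lebesgue measure (for instance, a ``fat'' standard carpet in which the removed squares shrink sufficiently quickly). The construction of Ntalampekos~\cite{Ntalampekos-carpet}, in the refined form alluded to by the authors, produces a homeomorphism $\phi:\widehat{\mathbb{C}}\to\widehat{\mathbb{C}}$ with $\phi(S_1) = S_2$ that is conformal on each peripheral disk of $S_1$; since $S_1$ has empty interior those disks cover $\widehat{\mathbb{C}}\setminus S_1$, and since $S_1\subseteq\eta$ the map $\phi$ is in fact conformal on all of $\widehat{\mathbb{C}}\setminus\eta$. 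Moreover $\phi$ cannot be M\"obius, because it sends the null set $S_1$ to the positive-measure set $S_2$.

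Second, I would transport the SLE$_\kappa$ welding $(\eta,\psi_1,\psi_2)$ from \cite{DMS21-LQG-MRT,ACSW-loop} through $\phi$, setting $\eta':=\phi(\eta)$ and $\psi'_i:=\phi\circ\psi_i$. Because $\phi$ is conformal on the image of each $\psi_i$, the new maps $\psi'_i$ are still conformal parametrizations of the corresponding side(s) of $\eta'$, and a direct computation gives $(\psi'_2)^{-1}\circ\psi'_1=\psi_2^{-1}\circ\phi^{-1}\circ\phi\circ\psi_1=\psi_2^{-1}\circ\psi_1$, which is exactly the welding homeomorphism associated with the original triple. Hence $(\eta',\psi'_1,\psi'_2)$ is a second valid conformal welding for the same welding homeomorphism.

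Finally, to conclude non-uniqueness I would rule out M\"obius equivalence of the two triples. If there were a M\"obius $M$ with $\eta'=M(\eta)$ and $\psi'_i=M\circ\psi_i$ for $i=1,2$, then $\phi$ and $M$ would agree on the dense set $\widehat{\mathbb{C}}\setminus\eta$, so by continuity $\phi=M$ on $\widehat{\mathbb{C}}$, contradicting the fact that $\phi$ is not M\"obius. The main obstacle is isolating from \cite{Ntalampekos-carpet} the precise refined statement needed here: one must check that the homeomorphism $\phi$ is genuinely conformal on every peripheral disk of $S_1$ and that the target $S_2$ can be chosen freely enough to force a Lebesgue-measure mismatch. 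Once this is granted, the remaining argument is a routine transport-of-welding computation.
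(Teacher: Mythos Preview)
Your approach is essentially identical to the paper's: find a carpet $S\subset\eta$, invoke Ntalampekos to get a homeomorphism $\phi$ conformal off $S$ sending $S$ to a positive-measure carpet, and push the welding through $\phi$. The precise refined statement you are looking for is \cite[Theorem~1.3]{Ntalampekos-carpet}, which the paper cites directly.

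There is, however, a small gap in your final step. As the paper makes explicit in Section~\ref{sec:welding}, non-uniqueness of welding means $[\phi(\eta)]\ne[\eta]$ in $\mathcal{L}/{\sim_{\text{M\"ob}}}$, i.e., you must rule out $\phi(\eta)=M(\eta)$ for \emph{every} M\"obius $M$. Your contradiction assumes both $\eta'=M(\eta)$ \emph{and} $\psi'_i=M\circ\psi_i$; but the second condition does not follow from the first (a priori there could be a M\"obius $M$ with $M(\eta)=\phi(\eta)$ while $M^{-1}\circ\phi$ acts as a nontrivial conformal automorphism on each complementary component). Since $\eta$ is not conformally removable, you cannot simply force $M^{-1}\circ\phi$ to be M\"obius. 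The paper closes this gap with the measure argument you already noted in your first paragraph: $\phi(\eta)\supset\phi(S_1)=S_2$ has positive Lebesgue measure, whereas $\eta$---and hence $M(\eta)$ for any M\"obius $M$---has zero Lebesgue measure since $\kappa<8$. This rules out $\phi(\eta)=M(\eta)$ directly, without any reference to the maps $\psi_i$.
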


Let us mention that the conformal non-removability does not imply the non-uniqueness of conformal welding, so Theorem~\ref{thm:welding} is really not a direct consequence of Theorem~\ref{thm:non-removable}. We will discuss conformal welding in more detail in Section~\ref{sec:welding}.
However, we remark that an abstract measurability argument was used in~\cite{DMS21-LQG-MRT} to show that there is almost surely a unique way to obtain an SLE$_\kappa$-decorated LQG surface from conformal welding for all $\kappa \in (4,8)$. A stronger version of the uniqueness result was proved in~\cite{MMQ-welding} where it was shown that the conformal welding problem for SLE$_\kappa$ has a unique solution if one requires the welding interface to be an SLE$_\kappa$ curve.

\begin{remark}(Different domains or variants of SLE)
    Since the properties in Theorem~\ref{thm:carpet} are topological, an analogous result holds for SLE$_\kappa$ in any simply connected domain when $\kappa \in (8 - \delta_0,8)$. (Note that the topological Sierpi\'nski carpet found in the curve $\eta$ from Theorem~\ref{thm:carpet} can be required to be bounded and have positive distance from $\partial \mathbb{H}$, so Theorem~\ref{thm:carpet} indeed extends to simply connected domains whose boundaries are not simple curves.) The SLE$_\kappa$ curve considered above is usually referred to as chordal SLE$_\kappa$ curve, which connects two boundary points of a simply connected domain. There are also other variants of SLE$_\kappa$ such as radial SLE$_\kappa$, SLE$_\kappa$ loops, or multiple SLE$_\kappa$ curves. For each of these variants, we can find a part of it that has the same law as a chordal SLE$_\kappa$. Therefore, Theorem~\ref{thm:carpet} also extends to these variants. As a consequence, by Theorem~\ref{thm:carpet-non-remove} and the arguments in Section~\ref{sec:welding}, for $\kappa \in (8 - \delta_0, 8)$, the range of any variant of SLE$_\kappa$ is almost surely conformally non-removable, and the solution to the conformal welding for these SLE$_\kappa$ variants is almost surely not unique.
\end{remark}

Theorem~\ref{thm:carpet} also partially answers the bubble connectivity problem posed in Question 11.2 of~\cite{DMS21-LQG-MRT}. A \textit{bubble} of $\eta$ is a connected component of $\mathbb{H} \setminus \eta$. Consider the adjacency graph of bubbles where two bubbles are adjacent if their boundaries intersect. It was shown in~\cite{GP20-connectivity} for $\kappa \in (4, \kappa_0)$, where $\kappa_0 \approx 5.6158$, the adjacency graph is almost surely connected, i.e., for any two bubbles $U$ and $V$, there exists a finite sequence of adjacent bubbles $U = U_1, U_2, \ldots, U_n = V$. Note that if the range of $\eta$ contains a topological Sierpi\'nski carpet, then all connected bubbles in the adjacency graph must lie within the same Jordan domain in Whyburn's criterion. In this case, the adjacancy graph must be disconnected. Therefore, we obtain the following theorem, and Theorem~\ref{thm:carpet} cannot hold for $\kappa \in (4,\kappa_0)$.

\begin{theorem}\label{thm:disconnect}
    For $\kappa \in (8 - \delta_0,8)$, the adjacency graph of the bubbles of $\eta$ is almost surely disconnected.
\end{theorem}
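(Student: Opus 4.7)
The plan is to deduce Theorem~\ref{thm:disconnect} directly from Theorem~\ref{thm:carpet} via Whyburn's topological characterization of Sierpi\'nski carpets; no new probabilistic input is needed.

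First, I would invoke Theorem~\ref{thm:carpet} in the stronger form mentioned in the remark: almost surely the range of $\eta$ contains a topological Sierpi\'nski carpet $S$ which is bounded and at positive distance from $\partial \bbH$. By Whyburn's theorem, write $\widehat{\bbC}\setminus S = \bigcup_{i \geq 0} Q_i$ where the $Q_i$ are Jordan domains with pairwise disjoint closures and spherical diameters tending to $0$. Let $Q_0$ be the unique unbounded component; then all $Q_i$ with $i \geq 1$ are eventually contained in a small neighborhood of $S$ (by the diameter condition), hence for $i$ large enough every $Q_i$ is a bounded Jordan subdomain of $\bbH$.

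Next, the key topological step. Since $S \subseteq \eta$, every bubble $U$ of $\eta$ satisfies $U \subseteq \bbH\setminus \eta \subseteq \widehat{\bbC}\setminus S$, and since $U$ is connected it is contained in a unique component $Q_{i(U)}$ of $\widehat{\bbC}\setminus S$. Then $\overline{U}\subseteq \overline{Q_{i(U)}}$, so for two bubbles $U,V$ with $i(U)\neq i(V)$, property (a) gives $\partial U \cap \partial V \subseteq \overline{Q_{i(U)}}\cap \overline{Q_{i(V)}}=\emptyset$, so $U$ and $V$ are non-adjacent. By induction along any finite chain of adjacent bubbles, the map $U\mapsto i(U)$ is constant on each connected component of the adjacency graph.

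It remains to exhibit bubbles in at least two distinct $Q_i$'s. For $\kappa\in(4,8)$ the range of $\eta$ has Hausdorff dimension $1+\kappa/8<2$ and hence zero Lebesgue measure. Pick two distinct indices $i,j\geq 1$ with $Q_i,Q_j\subset \bbH$; each is a non-empty open set, so $Q_i\setminus \eta$ and $Q_j\setminus\eta$ have positive Lebesgue measure and in particular are non-empty. Any point in $Q_i\setminus \eta$ lies in some bubble of $\eta$, which then lies entirely in $Q_i$; similarly for $Q_j$. These two bubbles belong to different connected components of the adjacency graph by the previous paragraph, so the graph is disconnected almost surely on the event provided by Theorem~\ref{thm:carpet}, completing the proof.

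I do not expect any real obstacle here: the only mild care is the choice of the carpet $S$ to lie strictly inside $\bbH$ (so that the bounded $Q_i$'s are genuine subdomains of $\bbH$ and their interiors indeed host bubbles), which is exactly the strengthening already noted in the remark following Theorem~\ref{thm:carpet}.
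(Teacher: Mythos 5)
Your proof is correct and takes the same route the paper sketches in the paragraph immediately preceding the theorem: the carpet's peripheral Jordan domains $Q_i$ have pairwise disjoint closures by Whyburn's criterion, so adjacency of bubbles can only occur within a single $Q_i$, and zero Lebesgue measure of $\eta$ guarantees bubbles in at least two distinct $Q_i$'s. You simply flesh out the details (containment of bubbles in a unique $Q_i$, the inductive constancy of $i(U)$ along chains, and the existence of bubbles in two components), which the paper leaves implicit.
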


In general, there is no simple characterization of conformal removability. In particular, it was proved in~\cite{bishop2020conformal} that the collection of conformally removable compact subsets of $[0,1]^2$ is not Borel with respect to the Hausdorff metric. Also, it was only known recently that the Sierpi\'nski gasket is conformally non-removable~\cite{Ntalampekos-gasket}. However, we have the following conjecture for the topological properties of SLE$_\kappa$ when $\kappa \in (4,8)$.\footnote{We learned from Ewain Gwynne and Konstantinos Kavvadias that they have an argument showing that the set of $\kappa$ in $(4,8)$ for which the adjacency graph of the complementary connected components  of SLE$_\kappa$ is connected is open. \label{footnote:gk-open}}

\begin{conjecture}\label{conj:removability}
    For $\kappa \in (4,8)$, let $\eta$ be an SLE$_\kappa$ curve from $0$ to $\infty$ in $\mathbb{H}$. There exists $\kappa_c \in (4,8)$ such that
    \begin{enumerate}[(1)]
        \item \label{conj:case1} For $\kappa \in (4, \kappa_c)$, the adjacency graph for the connected components of $\mathbb{H} \setminus \eta$ is almost surely connected.

        \item \label{conj:case2} For $\kappa \in (\kappa_c,8)$, the adjacency graph for the connected components of $\mathbb{H} \setminus \eta$ is almost surely disconnected, and moreover, the range of $\eta$ contains a topological Sierpi\'nski carpet. 
            
    \end{enumerate}
\end{conjecture}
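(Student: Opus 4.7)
The plan is to define
\[
\kappa_c := \sup \{\kappa \in (4,8) : \text{the bubble adjacency graph of SLE$_\kappa$ is a.s.\ connected}\}
\]
and argue that this is the threshold predicted by the conjecture. Writing $A$ for the set above and $B := \{\kappa \in (4,8) : \text{the range of SLE$_\kappa$ a.s.\ contains a topological Sierpi\'nski carpet}\}$, the results already in hand give $(4,\kappa_0) \subseteq A$ by \cite{GP20-connectivity} and $(8-\delta_0,8) \subseteq B$ by Theorem~\ref{thm:carpet}, so $\kappa_c \in [\kappa_0, 8-\delta_0] \subset (4,8)$. The observation preceding Theorem~\ref{thm:disconnect}---that a carpet in the range forces the bubble graph to be disconnected---implies $A \cap B = \emptyset$. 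The conjecture would then follow from two monotonicity statements: (a) $A$ is downward-closed in $(4,8)$ and (b) $B$ is upward-closed in $(4,8)$. Combined with the above inclusions and disjointness, these force $A = (4,\kappa_c)$ and $B \supseteq (\kappa_c,8)$, matching the conjecture.

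For (a), the natural approach is to couple SLE$_{\kappa_2}$ and SLE$_{\kappa_1}$ for $4 < \kappa_2 < \kappa_1 < 8$ via the imaginary geometry construction \cite{IG1,IG4}, realizing both curves as flow lines of a common Gaussian free field at angles determined by $\kappa_1$ and $\kappa_2$, and then trying to refine any chain of adjacent bubbles of SLE$_{\kappa_1}$ into one of SLE$_{\kappa_2}$ using the flow-line interaction rules from \cite{IG1}. A zero-one law (using scale invariance of chordal SLE) then upgrades any positive-probability statement to the almost-sure one required for $\kappa_2 \in A$. For (b), the natural approach is to quantify the $\kappa$-dependence in the proof of Theorem~\ref{thm:carpet}: the construction in Section~\ref{sec:main-proof} identifies a cascade of shell regions whose retention probabilities $p_n(\kappa)$ tend to $1$ as $\kappa \uparrow 8$, and if these can be coupled monotonically in $\kappa$ and compared to the abstract fractal percolation of Section~\ref{sec:fractal}, then the carpet property is monotone in $\kappa$ and $B$ is upward-closed. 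That the upward threshold in (b) coincides with $\kappa_c$ rather than being strictly larger would likely follow from a sharp-threshold inequality such as OSSS or Russo's approximate zero-one law applied to the percolation process, together with the carpet-versus-connected dichotomy built into the construction.

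The main obstacle will be establishing the monotonicities (a) and (b). There is no known coupling of SLE$_\kappa$'s across different $\kappa$ that preserves the combinatorial topology of the bubble arrangement; in particular, the imaginary geometry coupling is not monotone in any naive sense, since flow lines at different angles can cross, bounce off each other, and merge in delicate ways that do not translate into a monotone inclusion of bubble structures. The openness of $A$ due to Gwynne and Kavvadias (footnote~\ref{footnote:gk-open}) is suggestive but does not by itself rule out an oscillating configuration of alternating $A$- and $B$-intervals on $(4,8)$. Pinning down a single $\kappa_c$ is the heart of the difficulty and will likely require either a new monotone coupling of SLE curves across $\kappa$ or a sharp-threshold theorem for the non-standard fractal percolation model built on top of imaginary geometry.
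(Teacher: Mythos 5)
This statement is a \emph{conjecture}, not a theorem: the paper does not prove it, and explicitly lists it as an open problem in Section~\ref{sec:open} (``Prove Conjecture~\ref{conj:removability}''). So there is no proof in the paper against which to compare your proposal. What you have written is a plan, not a proof, and you are candid about this: the decisive steps---monotonicity of the bubble-connectivity property in $\kappa$ (your (a)), monotonicity of the carpet property (your (b)), and the coincidence of the two thresholds---are exactly the parts you flag as ``the main obstacle.'' Without them your argument only recovers what is already known, namely that both properties hold on some sub-intervals of $(4,8)$ and are mutually exclusive.

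Your diagnosis of the obstacle matches the paper's own view: the discussion following the open-problem statements in Section~\ref{sec:open} says explicitly that ``the main missing ingredient is the monotonicity of SLE$_\kappa$ with respect to $\kappa$,'' and suggests mating-of-trees as a possible route---precisely the kind of structural input you say is needed. Two smaller points worth keeping in mind. First, the imaginary geometry coupling does not realize SLE$_{\kappa_1}$ and SLE$_{\kappa_2}$ for $\kappa_1\neq\kappa_2$ as flow lines of a \emph{common} field; the parameters $\lambda,\lambda',\chi$ in~\eqref{eq:ig-parameter} all depend on $\kappa$, so ``same GFF, different angles'' does not give two SLEs of different $\kappa$. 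Second, even granting (a) and (b), the conjecture's case~\eqref{conj:case2} asserts both disconnection \emph{and} a topological Sierpi\'nski carpet for all $\kappa>\kappa_c$, so one would additionally need to rule out a gap where the bubble graph is disconnected but no carpet appears; your appeal to a sharp-threshold inequality is aimed at this, but no such inequality is currently available for this non-product, scale-inhomogeneous percolation built on the GFF. In short, the proposal is a reasonable sketch whose acknowledged gaps coincide with why the statement remains open.
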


If Conjecture~\ref{conj:removability} were proved, one could essentially determine the conformal removability of SLE$_\kappa$ from its topological properties, except at the critical value. In particular, for $\kappa \in (4,\kappa_c)$, by~\cite{KMS-removability-nonsimple}, we know that the range of $\eta$ is almost surely conformally removable. For $\kappa \in (\kappa_c,8)$, by~\cite{Ntalampekos-carpet}, we know that the range of $\eta$ is almost surely conformally non-removable. We also conjecture a phase transition for the conformal removability and bubble connectivity of exotic SLE$_\kappa(\rho)$ with $\kappa \in (0,4)$ and $\rho>-2-\frac{\kappa}{2}$, which includes Conjecture~\ref{conj:removability} as a special case; see Section~\ref{sec:open}.

For fractal percolation, we can in fact show that throughout the connected phase $p \in (p_c, 1)$, the final retained set contains a topological Sierpi\'nski carpet with positive probability; see Proposition~\ref{prop:super-sierpinski}. However, the behavior at the critical point $p = p_c$ remains unclear. Similar phase transitions also occur in Brownian loop soups~\cite{SW-CLE}, where one samples a Poissonian collection of Brownian loops in the unit disk $\mathbb D$, with the intensity $c >0$ serving as the percolation parameter. Two Brownian loops are considered connected if they can be linked by a finite chain of intersecting loops, thereby belonging to the same loop soup cluster. It was shown in~\cite{SW-CLE} that when the intensity $c>1$, there is a single Brownian loop soup cluster whose closure is $\overline{\mathbb{D}}$, and, if we remove from $\mathbb{D}$ the regions enclosed by Brownian loops, the retained set is totally disconnected (this is analogous to the totally disconnected phase in fractal percolation if we interpret the regions enclosed by Brownian loops as removed boxes). When the intensity $c \in (0,1]$, there are infinitely many Brownian loop soup clusters. Moreover, if we remove from $\overline{\mathbb{D}}$ the regions enclosed by Brownian loops, the retained set contains a topological Sierpi\'nski carpet (analogous to the connected phase). In fact, the set of points in $\overline{\mathbb D}$ not surrounded by any Brownian loop soup cluster has the law of a CLE$_\kappa$ carpet\footnote{For $\kappa \in (8/3,4]$, CLE$_\kappa$ is a random collection of simple loops in $\mathbb{D}$, each of which locally looks like an SLE$_\kappa$ curve. The CLE$_\kappa$ carpet is defined as the set of points in $\overline{\mathbb{D}}$ that are not surrounded by any CLE$_\kappa$ loop.} for an explicit $\kappa = \kappa(c) \in (8/3,4]$, which is almost surely a topological Sierpi\'nski carpet. A key additional property of the Brownian loop soup is its conformal invariance, which is absent in fractal percolation. In light of the Brownian loop soup case, we conjecture that at $\kappa = \kappa_c$, scenario~\eqref{conj:case2} in Conjecture~\ref{conj:removability} holds. We refer to Section~\ref{sec:open} for further discussion and open problems related to other random fractals.

\medskip
\noindent\textbf{Organization of the paper.} In Section~\ref{sec:fractal}, we review Mandelbrot's fractal percolation. We will prove our main result Theorem~\ref{thm:carpet} in Section~\ref{sec:main-proof} modulo some technical lemmas that will be proved in subsequent sections. In Section~\ref{sec:SLE}, we provide the necessary background on SLE, GFF, and imaginary geometry (IG) and prove Lemmas~\ref{lem:flow-line-not-touch} and~\ref{lem:kappa-to-8}. In Section~\ref{sec:GFF}, we prove Lemma~\ref{lem:induction-bound} which is entirely about the Gaussian free field. Section~\ref{sec:carpet} is dedicated to proving Proposition~\ref{prop:find-carpet}. In Section~\ref{sec:welding}, we discuss the conformal welding problem for SLE and prove Theorem~\ref{thm:welding}. Finally, in Section~\ref{sec:open}, we discuss some open problems.

\medskip
\noindent\textbf{Basic Notations.} In this paper, we use $|\cdot|_\infty$ to denote the $L^\infty$-norm. Let $B(x,r) = \{ z \in \mathbb{C}: |z-x|_\infty<r \}$ denote the open Euclidean box.
For two sets $A,B \subset \mathbb R^2$, let $d(A,B)=\inf\{|x-y|_\infty:x \in A, y \in B\}$ and $\mathrm{diam}(A)=\sup\{|x-y|_\infty:x,y \in A\}$. Define $\mathbb{B} = (-1,1)^2$. For two quantities $a,b$ depending on a parameter $r$, we write $a=o_r(b)$ if $a/b \to 0$ as $r \to \infty$ or $r \to 0$, depending on the context.

\medskip
\noindent\textbf{Acknowledgements.} We are grateful to Xin Sun for encouragement and enlightening discussions at various stages of this project. We also thank Morris Ang, Ewain Gwynne, Konstantinos Kavvadias and Pu Yu for helpful discussions and comments on the manuscript. H.L.\ is partially supported by National Key R\&D Program of China (No.\ 2023YFA1010700) and the Fundamental Research Funds for the Central Universities, Peking University. Z.Z.\ was partially supported by NSF grant DMS-1953848.

\section{Sierpi\'nski carpet in Mandelbrot's fractal percolation}
\label{sec:fractal}
As mentioned earlier, the proof of Theorem~\ref{thm:carpet} is based on insights from fractal percolation~\cite{mandelbrot-fractal,CCD-fractal}. To better illustrate the proof idea, we will prove a parallel result for this model in this section.

Mandelbrot's fractal percolation model is defined as follows. Fix a positive integer $N \ge 2$, and let $p \in [0,1]$. We partition the unit box $[0,1]^2$ into $N$-adic boxes: for $n \ge 0$, let $\mathfrak B_n=\{[\frac{i-1}{N^n},\frac{i}{N^n}] \times [\frac{j-1}{N^n},\frac{j}{N^n}]:1 \le i,j \le N^n\}$ be the collection of boxes of side-length $N^{-n}$, such that each box in $\mathfrak B_n$ contains $N^2$ sub-boxes in $\mathfrak B_{n+1}$. For $B \in \cup_{n \ge 1} \mathfrak B_n$, let $\sigma_B$ be independent Bernoulli random variables with $\mathbb{P}[\sigma_B=1]=1-\mathbb{P}[\sigma_B=0]=p$. Let $A_0=[0,1]^2$. For $n \ge 1$, we inductively define $A_n=A_{n-1} \cap (\cup_{B \in \mathfrak B_n: \sigma_B=1} B)$. We may view (modulo some boundary issues) that boxes $B \in \mathfrak B_n$ with $\sigma_B=0$ as being removed at step $n$, and $A_n$ is the closed set of points retained after step $n$.

Let $A_\infty=\cap_{n \ge 0} A_n$ be the retained set, which is a random fractal set with Hausdorff dimension $\max\{2+\frac{\log p}{\log N},0\}$. It is easy to see that when $p \le \frac{1}{N^2}$, we have $A_\infty=\emptyset$ almost surely; whereas when $p > \frac{1}{N^2}$, the set $A_\infty$ is non-empty with positive probability. The seminal work~\cite{CCD-fractal} studied the connectivity properties of $A_\infty$ and proved the existence of a non-trivial phase transition: there exists $p_c=p_c(N) \in [\frac{1}{\sqrt{N}},1)$ such that when $\frac{1}{N^2}<p<p_c$, the set $A_\infty$ is a.s. totally disconnected, while when $p_c \le p \le 1$, the set $A_\infty$ contains a horizontal crossing of $[0,1]^2$ with positive probability.

The counterpart of Theorem~\ref{thm:carpet} for fractal percolation is the following.

\begin{figure}
    \centering
    \includegraphics[width=0.4\linewidth]{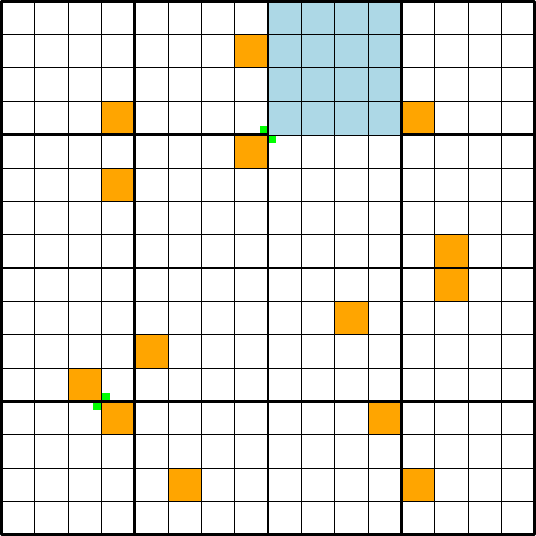}
    \caption{$N=4$ and on the event that $[0,1]^2$ is $\infty$-good. The regions in light-blue (resp.\ in light-blue or orange) are $[0,1]^2 \setminus A_1^\dagger$ (resp.\ $[0,1]^2 \setminus A_2^\dagger$). The set $A_2^*$ is obtained from $A_1^* \cap A_2^\dagger$ by further removing the green boxes.}
    \label{fig:fractal}
\end{figure}

\begin{proposition}\label{prop:carpet-fractal}
    There exists $p_0=p_0(N) \in (0,1)$ such that for $p \in (p_0,1)$, with positive probability, $A_\infty$ contains a topological Sierpi\'nski carpet.
\end{proposition}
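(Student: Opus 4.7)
The plan is to produce, on a positive-probability event, a topological Sierpi\'nski carpet $S \subseteq A_\infty$ via a multi-scale construction whose combinatorial backbone is a supercritical Galton--Watson branching process. We will exhibit $S$ as the complement in $[0,1]^2$ of a family of Jordan holes $\{Q_B\}$ indexed by the surviving nodes of a random subtree of the $N$-adic tree.

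For a level-$n$ box $B \in \mathfrak B_n$, call $B$ \emph{good} if all $4(N-1)$ level-$(n+1)$ sub-boxes of $B$ that touch $\partial B$ (the \emph{frame} sub-boxes) are retained. This local event has probability $q := p^{4(N-1)}$, depends only on the configuration one level below inside $B$, and is independent across disjoint level-$n$ boxes. Build a random tree rooted at $[0,1]^2$: the children of a good node are its $4(N-1)$ frame sub-boxes, a non-good node has no children, and iterate. The collection of good surviving descendants is (essentially) a Galton--Watson tree with mean offspring $4(N-1)\, q$, which exceeds $1$ whenever $p$ is close to $1$; thus the tree survives with some positive probability $\rho(p) > 0$ by classical Galton--Watson theory. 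For each good $B$ at level $n$, take $Q_B \subset B$ to be the open central $(N-2) \times (N-2)$ block of level-$(n+1)$ sub-boxes---an open square of side $(N-2)N^{-(n+1)}$ at distance $N^{-(n+1)}$ from $\partial B$---augmented if necessary by absorbing non-good frame sub-boxes together with auxiliary ``green'' edge sub-boxes (as in Figure~\ref{fig:fractal}), so that the resulting hole remains a topological disk.

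Set $S := [0,1]^2 \setminus \bigcup_B Q_B$. By construction $S \subseteq A_\infty$, since every good surviving box is retained. On the survival event we verify Whyburn's criterion: (a) each $Q_B$ is a Jordan domain by the augmentation rule; (b) $\overline{Q_B} \cap \overline{Q_{B'}} = \emptyset$ for distinct $B, B'$, because same-level holes are separated by retained frame strips of width $\ge N^{-(n+1)}$, and a descendant hole lies strictly inside a good frame sub-box of $B$, hence at distance $\ge N^{-(n+2)}$ from $\overline{Q_B}$; (c) $\operatorname{diam}(Q_B) = O(N^{-n}) \to 0$; (d) $S$ has empty interior, since every point of $S$ lies in arbitrarily small good boxes each harboring a nearby hole. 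Whyburn's theorem then asserts that $S$ is a topological Sierpi\'nski carpet contained in $A_\infty$, which completes the proof.

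The principal obstacle is the green-box rule and its topological verification. Absorbing a non-good edge frame sub-box into $Q_B$ is harmless, as it shares a whole edge with the central block, but absorbing a non-good \emph{corner} frame sub-box creates a pinch---it touches the central block only at a single point---which forces us to also absorb an adjacent edge sub-box, sacrificed from the tree. A subtler issue arises between different tree-boxes: if two adjacent good boxes at the same level both extend their holes up to their shared boundary via absorbed sub-boxes, then the closures meet along a segment, violating disjointness. We therefore strengthen the good event to guarantee an extra protective layer of retained sub-boxes near $\partial B$ and coordinate green absorptions so that holes never reach $\partial B$ at a location shared with a neighbouring tree-box. Since each of these modifications remains a bounded-complexity local event whose probability tends to $1$ as $p \to 1$, the effective offspring distribution of the branching process loses only an $o_{p \to 1}(1)$ amount, preserving supercriticality and the positive survival probability $\rho(p)$.
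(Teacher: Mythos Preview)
Your Galton--Watson framework is an appealing alternative to the paper's approach, but there is a genuine gap: survival of the branching process is not enough to guarantee that the hole closures $\{\overline{Q_B}\}$ are pairwise disjoint, and your proposed ``protective layer'' fix cannot repair this while remaining a local condition. The problem is that GW survival only ensures that \emph{some} infinite branch exists; it does not prevent many frame sub-boxes of a given good node from being non-good simultaneously. Concretely, take a good box $B$ and a good frame sub-box $C$ of $B$ that shares an edge with the central block of $B$. If every frame sub-sub-box of $C$ lying along that shared edge happens to be non-good, they are all absorbed into $Q_C$, so $\overline{Q_C}$ meets $\overline{Q_B}$ along a segment. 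This is a perfectly typical configuration on the survival event---it has positive conditional probability---and no strengthening of ``good'' that depends only on finitely many levels below $B$ can rule it out, because the non-goodness of those sub-sub-boxes is determined arbitrarily deep in the tree. In the extreme, if along some branch the tree repeatedly has only one or two good children, the absorbed regions percolate and $S$ degenerates (in the limiting case of a single infinite ray, $S$ is a point).

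What is actually needed is a condition guaranteeing that at \emph{every} surviving node, the bad children are sparse---say at most one, or contained in a single small cluster---uniformly over all levels. This is exactly what the paper's recursive ``$m$-good'' definition achieves: a box is $m$-good if at least $N^2-1$ of its sub-boxes are both retained and $(m-1)$-good, and one shows by an explicit one-step recursion $\vartheta_m=\phi(\vartheta_{m-1})$ that $\inf_m\vartheta_m>0$ for $p$ close to $1$. On the $\infty$-good event, at most one sub-box is lost per level, so removed components grow by at most $O(N^{-k})$ at stage $k$ and a geometric series keeps distinct holes at distance $\geq N^{-k}/5$ apart forever. Your scheme, by contrast, controls only the \emph{existence} of good descendants, not their \emph{abundance}; to close the gap you would need to replace the bare GW survival event by something equivalent to the paper's recursion, at which point the branching-process packaging no longer buys anything.
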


A variant of Proposition~\ref{prop:carpet-fractal} (with the event replaced by ``there exists a horizontal crossing of $[0,1]^2$ in $A_\infty$") was proved in~\cite{CCD-fractal} and used to deduce that $p_c<1$. In fact, their proof (with a little more thought) also works for our formulation.

\begin{proof}[Proof of Proposition~\ref{prop:carpet-fractal}]
    For convenience, we will consider $N\ge 6$. Since the case for $N$ can be treated by comparing it with the case $N'= N^2$, the conclusion then holds for all $N \ge 2$.
    
    For each box $B \in \cup_{n \ge 1} \mathfrak B_n$, $B$ is called \emph{$0$-good} if $\sigma_B=1$; and we always say $[0,1]^2$ is $0$-good. $B$ is called \emph{$1$-good} if at least $(N^2-1)$ of its sub-boxes are $0$-good. For $m \ge 2$, $B$ is called \emph{$m$-good} if at least $(N^2-1)$ of its sub-boxes are both $0$-good and $(m-1)$-good. Note that the $m$-good properties of disjoint boxes of the same side-length are independent from each other.
    Also observe that for $m \ge 1$, the $m$-good condition becomes sharper as $m$ increases (that is, for $1 \le m<m'$, if $B$ is $m'$-good, then $B$ is also $m$-good). If $B$ is $m$-good for all $m \ge 0$, we say $B$ is \emph{$\infty$-good}.
    
    We claim that if $[0,1]^2$ is $\infty$-good, then
    \begin{equation}\label{eq:fractal-carpet-1}
        \mbox{the retained set } A_\infty \mbox{ contains a topological Sierpi\'nski carpet.}
    \end{equation}
    Suppose that $[0,1]^2$ is $\infty$-good. Let $A_0^\dagger = [0,1]^2$. For $n \geq 1$, we define $A_n^\dagger \subseteq A_n \cap A_{n-1}^\dagger$ to be the union of boxes in $\mathfrak B_n$ that are $\infty$-good and contained in $A_n \cap A_{n-1}^\dagger$. For each $B \in \mathfrak B_n$ such that $B \subset A_n^\dagger$, we see that at least $(N^2-1)$ of its sub-boxes are retained in $A_{n+1}^\dagger$. Typically, $\cap_{n \geq 0} A_n^\dagger$ is not a topological Sierpi\'nski carpet as the closures of its complementary connected components may intersect at the corner. Therefore, we will modify $A_n^\dagger$ to define $A_n^*$ by removing sub-boxes at the corner so that $\cap_{n \geq 0} A_n^*$, which is contained in $A_\infty$, is a topological Sierpi\'nski carpet.
    
    Let $A_0^*=[0,1]^2$, and for $n \ge 1$, we inductively define $A_n^* \subseteq A_n^\dagger$ as follows. Let $\mathcal C_n$ denote the collection of all complementary connected components of $A_{n-1}^* \cap A_n^\dagger$. Whenever the closures of two components in $\mathcal C_n$ intersect exactly at one point, we further remove from $A_{n-1}^* \cap A_n^\dagger$ the other two boxes in $\mathfrak B_{n+1}$ incident to that point, and denote the closure of the resulting set by $A_n^*$; see Figure~\ref{fig:fractal}. It is clear that the complementary components of $A_n^*$ have pairwise disjoint closures, and $\{A_n^*\}_{n \ge 0}$ is a decreasing sequence of closed sets.  

    We now show that
    \begin{equation}\label{eq:fractal-carpet-2}
        A_\infty^*:=\cap_{n \ge 0} A_n^* \subseteq A_\infty  \mbox{ is a topological Sierpi\'nski carpet,}
    \end{equation}
    which directly implies~\eqref{eq:fractal-carpet-1}.
    Let $\{Q_i\}_{i \ge 1}$ be the complementary connected components of $A_\infty^*$, including the unbounded one. We first observe that for any $k \ge 1$ and a complementary component $\mathcal Q^k$ of $A_k^*$, if we denote by $\mathcal Q^{k+m}$ the complementary component of $A_{k+m}^*$ that contains $\mathcal Q^k$, then $\cup_{m \ge 0} \mathcal Q^{k+m}$ is a complementary component of $A_\infty^*$. Conversely, for each complementary component $Q_i$ of $A_\infty^*$, there exists a positive integer $k$ such that $Q_i$ contains a complementary component of $A_k^*$. The crucial observation for proving~\eqref{eq:fractal-carpet-2} is that the evolutions of two complementary components of $A_k^*$ can never touch each other. In fact, if $\mathcal Q_1^k$ and $\mathcal Q_2^k$ are two complementary components of $A_k^*$, then $d(\mathcal Q_1^k,\mathcal Q_2^k) \ge N^{-k}(1-2N^{-1})$. This is because before possibly removing some boxes in $\mathfrak B_{k+1}$ at the corner, the $|\cdot|_\infty$-distance between two different components is at least $N^{-k}$, and such removal operation reduces the distance by no more than $2N^{-k-1}$.
    For $m \ge 0$, let $\mathcal Q_1^{k+m}$ (resp.\ $\mathcal Q_2^{k+m}$) be the complementary connected components of $A_{k+m}^*$ that contains $\mathcal Q_1^k$ (resp.\ $\mathcal Q_2^k$). Recall that for each $B \in \mathfrak B_k$ such that $B \subset A_k^\dagger$, at most one sub-box of $B$ fails to remain in $A_{k+1}^\dagger$, so we have
    \[ d(\mathcal Q_1^{k+1},\mathcal Q_2^{k+1}) \ge d(\mathcal Q_1^k,\mathcal Q_2^k)-2N^{-k-1}-2N^{-k-2} \ge N^{-k}(1-4N^{-1}-2N^{-2}), \]
    where the additional $2N^{-k-2}$ comes from the potential removal of some boxes in $\mathfrak B_{k+2}$ at the corner in the construction of $A_{k+1}^*$. In general, we can prove inductively that
    \[ d(\mathcal Q_1^{k+m},\mathcal Q_2^{k+m}) \ge N^{-k}(1-4N^{-1}-4N^{-2}-\cdots-4N^{-m}-2N^{-m-1}), \quad \mbox{for all } m \ge 1. \]
    This implies that $d(\mathcal Q_1^{k+m},\mathcal Q_2^{k+m}) \ge N^{-k}/5$ for all $m \ge 0$ (where we used $N \geq 6$), so $\cup_{m \ge 0} \mathcal Q_1^{k+m}$ and $\cup_{m \ge 0} \mathcal Q_2^{k+m}$ have disjoint closures.
    Therefore, for different $Q_i$ and $Q_j$, there exists a positive integer $k$ such that $Q_i$ (resp.\ $Q_j$) contains a \emph{unique} complementary connected component $\mathcal Q_i^k$ (resp.\ $\mathcal Q_j^k$) of $A_k^*$, and hence $Q_i=\cup_{m \ge 0} \mathcal Q_i^{k+m}$ and $Q_j=\cup_{m \ge 0} \mathcal Q_j^{k+m}$ have disjoint closures.
    A similar argument shows that each $Q_i$ is a simply connected domain whose boundary is a simple continuous curve; hence, each $Q_i$ is a Jordan domain. Moreover, since $A_\infty$ has Hausdorff dimension smaller than 2, $A_\infty^*$ has no interior points. By Whyburn's criterion~\cite{Whyburn-carpet}, $A_\infty^*$ is a topological Sierpi\'nski carpet, which proves~\eqref{eq:fractal-carpet-2}. The desired proposition now follows from Proposition~\ref{prop:fractal-rsw} below.
\end{proof}

\begin{proposition}\label{prop:fractal-rsw}
    For each $m \ge 0$, let $\vartheta_m$ be the probability that $[0,1]^2$ is $m$-good. Then there exists $p_0=p_0(N) \in (0,1)$ such that $\inf_{m \ge 0} \vartheta_m \ge 2/3$ for all $p \in (p_0,1)$.
\end{proposition}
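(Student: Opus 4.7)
The plan is to express $\vartheta_m$ as the $m$-th iterate of a simple one-dimensional map started from $1$, and to exploit the fact that this map has a super-attracting fixed point at $x = 1$, so that the iteration stays arbitrarily close to $1$, uniformly in $m$, whenever $p$ is close to $1$. Concretely, I would first let $q_m$ denote the probability that a generic box $B \in \mathfrak{B}_n$ (for any $n \ge 1$) is $m$-good. For such a $B$, the event ``$B$ is $0$-good'' depends only on $\sigma_B$, while for $m \ge 2$ the event ``$B$ is $(m-1)$-good'' depends only on the $\sigma$-labels of the descendants of $B$ at strictly finer scales; hence these two events are independent. Combined with the mutual independence of the $(m-1)$-good events across the $N^2$ children of $B$, this yields $q_m = f(p q_{m-1})$ for $m \ge 2$ and $q_1 = f(p)$, where
\[ f(x) := x^{N^2} + N^2 x^{N^2-1}(1-x) \]
is the probability of at least $N^2 - 1$ successes in $N^2$ i.i.d.\ $\mathrm{Bernoulli}(x)$ trials. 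Since $[0,1]^2$ is always $0$-good by convention, setting $\vartheta_0 := 1$ the same argument shows $\vartheta_m = q_m$ for $m \ge 1$, and hence
\[ \vartheta_m = f(p\vartheta_{m-1}), \qquad m \ge 1. \]

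The next step is a local analysis of $f$ around $x = 1$. A direct computation gives $f'(x) = N^2(N^2-1) x^{N^2-2}(1-x)$, so $f$ is non-decreasing on $[0,1]$ with $f(1) = 1$ and $f'(1) = 0$; moreover $f''(1) = -N^2(N^2-1)$, yielding $f(1-\delta) = 1 - C\delta^2 + O(\delta^3)$ with $C := N^2(N^2-1)/2$. I would pick $\alpha = \alpha(N) \in (0, 1/3]$ small enough that $f(1-\alpha) - (1-\alpha) \ge \alpha/2$; this is possible because the left-hand side equals $\alpha - C\alpha^2 + O(\alpha^3)$. By continuity of $p \mapsto f(p(1-\alpha))$ at $p = 1$, there then exists $p_0 = p_0(N) \in (0,1)$ such that $f(p(1-\alpha)) \ge 1 - \alpha$ for all $p \in (p_0, 1)$.

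The proposition will then follow by a one-line induction on $m$: $\vartheta_0 = 1 \ge 1 - \alpha$, and if $\vartheta_{m-1} \ge 1-\alpha$, then monotonicity of $f$ gives $\vartheta_m = f(p\vartheta_{m-1}) \ge f(p(1-\alpha)) \ge 1-\alpha \ge 2/3$. I do not anticipate any serious obstacle; the only substantive input is the identity $f'(1) = 0$, which makes the iteration $x \mapsto f(px)$ quadratically contracting from above near $1$. The one place that deserves care is the independence structure behind the recursion, namely that the $(m-1)$-good property of a box does not involve its own label $\sigma_B$, and therefore is independent of the $0$-good event for the same box.
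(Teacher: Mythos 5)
Your proof is correct and takes essentially the same route as the paper's: both derive the recursion $\vartheta_m = f(p\vartheta_{m-1})$ (the paper writes $\phi(x) = f(px)$), observe monotonicity of $f$, and run the same one-line induction once one knows that $f(p(1-\alpha)) \ge 1-\alpha$ for some fixed small $\alpha = \alpha(N)$ and all $p$ near $1$. The only cosmetic difference is that you isolate $f'(1)=0$ and invoke a Taylor expansion plus continuity in $p$, whereas the paper verifies the inequality $\phi(1-\nu) > 1-\nu$ (with $p_0 = (1-\nu/2)^{1/N^2}$) by a direct expansion of the explicit polynomial; the underlying mechanism, a super-attracting fixed point at $1$, is the same.
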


\begin{proof}
    By definition, $\vartheta_0=1$, and for $m \ge 1$, $\vartheta_m=\phi(\vartheta_{m-1})$, where
    \begin{align*}
        \phi(x)&=(px)^{N^2}+N^2(px)^{N^2-1}(1-px) \\
        &=N^2 p^{N^2-1} x^{N^2-1}-(N^2-1) p^{N^2} x^{N^2}.
    \end{align*}
    Note that $\phi$ increases on $[0,1]$. For sufficiently small $\nu>0$ and $p>p_0:=(1-\nu/2)^{1/N^2}$, we have
    \begin{align*}
        \phi(1-\nu) &\ge N^2 p^{N^2-1}(1-(N^2-1)\nu)-(N^2-1) p^{N^2}(1-N^2\nu+\tfrac{1}{2}N^2(N^2-1)\nu^2) \\
        &=N^2(p^{N^2-1}-p^{N^2})+p^{N^2}-\tfrac{1}{2} N^2(N^2-1)^2 p^{N^2} \nu^2 \\
        &\ge p^{N^2}-\tfrac{1}{2} N^2(N^2-1)^2 \nu^2>1-\nu.
    \end{align*}
    The last inequality follows by choosing $\nu$ sufficiently small depending on $N$. Then, it follows from simple induction that $\vartheta_m>1-\nu>2/3$ for all $m \ge 1$ and $p \in (p_0,1)$.
\end{proof}

The result of Proposition~\ref{prop:carpet-fractal} can be improved via a qualitative argument. One may hope to extend this argument to SLE$_\kappa$ to prove Conjecture~\ref{conj:removability}, but the main difficulty is the absence of an increasing coupling for SLE$_\kappa$ with $\kappa \in (4,8)$, under which the connectivity of the adjacency graph is decreasing.

\begin{proposition}\label{prop:super-sierpinski}
    For any $p>p_c$, the retained set $A_\infty$ contains a topological Sierpi\'nski carpet with positive probability.
\end{proposition}

\begin{proof}
    Consider the removed boxes in fractal percolation, i.e., the $N$-adic boxes for which $\sigma_B = 0$ and which are not contained in any other box with $\sigma_B = 0$. We say that two removed boxes $A$ and $B$ are \emph{connected} if there exists a finite chain of removed boxes $A = A_1, A_2, \ldots, A_{n-1}, A_n = B$ such that each pair $A_i$ and $A_{i+1}$ shares a non-trivial boundary segment. Indeed, if $A_i$ and $A_{i+1}$ intersect at exactly one point, there almost surely exists another removed box $A'$ containing that point such that $A'$ intersects both $A_i$ and $A_{i+1}$ along a non-trivial boundary segment. We define \emph{removed box clusters} as the unions of removed boxes that are connected in this way. The final retained set $A_\infty$ can be equivalently defined by removing from $[0,1]^2$ the interiors of all removed box clusters.

    For a removed box cluster $\mathcal{O}$, its \emph{filling} $F(\mathcal{O})$ is defined to be the complement of the unbounded connected component of $\mathbb{C} \setminus \overline{\mathcal{O}}$, and its \emph{outer boundary} $\partial^o\mathcal{O}$ is defined to be the boundary of $F(\mathcal{O})$. Consider the union of $\mathbb{C} \setminus [0,1]^2$ and all removed box clusters that intersect the boundary of $[0,1]^2$. Define $E$ as the event that its complement has interior points. It follows from the FKG inequality that $E$ occurs with positive probability when $p \geq p_c$~\cite{CCD-fractal}. On the event $E$, we choose an arbitrary connected component of its interior, denoted by $U$.
    We say that a removed box cluster $\mathcal{O}$ contained in $U$ is an outermost removed box cluster if it is not enclosed by the outer boundary of any other removed box cluster in $U$. Suppose that for some $p \in [p_c, 1)$, the following holds almost surely on the event $E$: $\partial U$ is a simple loop and for any two outermost removed box clusters $\mathcal{O}, \mathcal{O}'$ contained in $U$,
    \begin{equation}\label{eq:outer-non-intersect}
        \partial^o \mathcal{O} \mbox{ and } \partial^o \mathcal{O}' \mbox{ are simple loops and are disjoint from each other and from } \partial U. 
    \end{equation}
    We claim that for $p$ satisfying~\eqref{eq:outer-non-intersect}, on the event $E$, the set $A_\infty$ contains a topological Sierpi\'nski carpet. Indeed, let $\{F(\mathcal{O}_i)\}_{i \geq 1}$ denote the fillings of all outermost removed box clusters in $U$. Then $A_\infty \supset \overline{U} \setminus \cup_{i \geq 1}(F(\mathcal{O}_i))^\circ$, where $(F(\mathcal{O}_i))^\circ$ denotes the interior of $F(\mathcal{O}_i)$. Moreover, as implied by~\eqref{eq:outer-non-intersect}, there are infinitely many such outermost removed box clusters (since $A_\infty$ has no interior points). By~\eqref{eq:outer-non-intersect} and Whyburn's criterion, the set $\overline{U} \setminus \cup_{i \geq 1}(F(\mathcal{O}_i))^\circ$ is a topological Sierpi\'nski carpet.

    We next show that~\eqref{eq:outer-non-intersect} holds for any $p>p_c$ except for a countable set. In particular, combined with the monotonicity of $A_\infty$ in $p$, this implies that for any $p>p_c$, the set $A_\infty$ contains a topological Sierpi\'nski carpet with positive probability. We first show that except for a countable set of $p$, for any $p \in (0,1)$, it is almost surely the case that
    \begin{equation}\label{property:strong-disconnect}
        \mbox{the boundaries of any two different removed box clusters do not intersect.}
    \end{equation}
    Fix two $N$-adic boxes $A$ and $B$, and consider the removed box clusters $\mathcal{O}_A$ and $\mathcal{O}_B$ containing $A$ and $B$, respectively. There is a natural increasing coupling of fractal percolation for different $p \in (0,1)$ in terms of $\sigma_B$. The crucial observation is that there is almost surely at most one value of $p$ for which $\mathcal{O}_A \neq \mathcal{O}_B$ and $\overline{\mathcal{O}}_A \cap \overline{\mathcal{O}}_B \neq \emptyset$. Specifically, consider the largest $\widetilde p \in [0,1]$ such that $\overline{\mathcal{O}}_A$ and $\overline{\mathcal{O}}_B $ intersect at some point $z$; see Figure~\ref{fig:fractal-2}. Then, a.s., $\mathcal{O}_A = \mathcal{O}_B$ for all $p < \widetilde p$, since for typical $z$, there is a removed box containing $z$ as an interior point at parameter $p$ for any $p<\widetilde p$ (if $z$ lies on the boundary or at a corner of a removed box, then we can choose at most four removed boxes whose union contains $z$ as an interior point).
    By a first moment estimate and by enumerating over all pairs $(A,B)$, we obtain~\eqref{property:strong-disconnect}.

    We now show that~\eqref{eq:outer-non-intersect} holds for any $p \in [p_c,1)$ that satisfies~\eqref{property:strong-disconnect}. The fact that $\partial^o \mathcal{O}_1$ and $\partial^o \mathcal{O}_2$ are disjoint from each other and from $\partial U$ follows directly from~\eqref{property:strong-disconnect}. Using arguments similar to~\cite[Lemma 9.7]{SW-CLE}, we can show that $\partial^o \mathcal{O}_1$ is a continuous loop. (An important step in~\cite[Lemma 9.6]{SW-CLE} is the use of the BK inequality to show that there are finitely many removed box clusters crossing a fixed annulus, which also applies in fractal percolation.) Moreover, since the removed box cluster has no cut points, $\partial^o \mathcal{O}_1$ is simple. Similar arguments show that $\partial U$ is a continuous simple loop. This yields~\eqref{eq:outer-non-intersect} from~\eqref{property:strong-disconnect}. Combining this with the monotonicity of $A_\infty$ in $p$ implies that for any $p>p_c$, the set $A_\infty$ contains a topological Sierpi\'nski carpet with positive probability, concluding the proposition. \qedhere

   \begin{figure}
    \centering
    \includegraphics[width=0.4\linewidth]{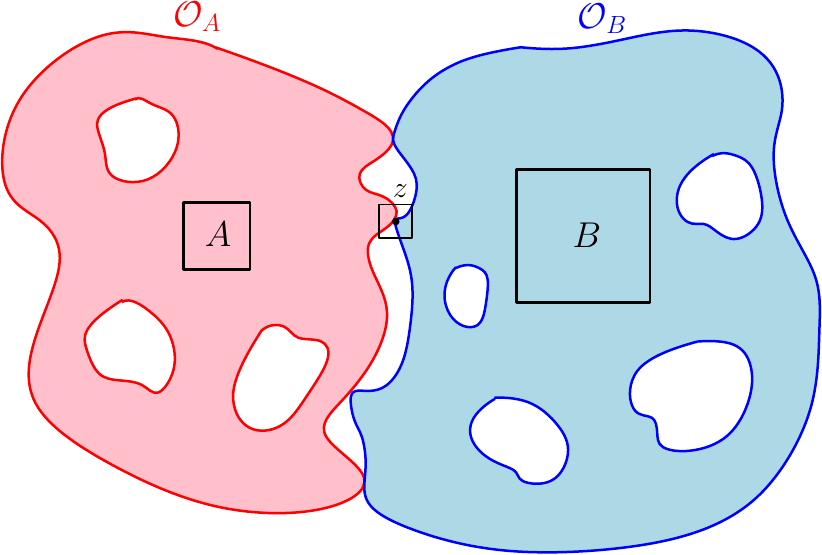}
    \caption{The boundaries of two removed box clusters intersect at some point. Decreasing $p$ by any positive amount will almost surely merge these two clusters into one.}
    \label{fig:fractal-2}
    \end{figure}
    
\end{proof}
    
    Finally, we remark that at present we do not know whether $A_\infty$ contains a topological Sierpi\'nski carpet at the critical point $p = p_c$. We also do not know whether~\eqref{property:strong-disconnect} holds for all $p > p_c$, although we conjecture this to be the case.
    We refer to Section~\ref{sec:open} for further discussion.

\section{A fractal percolation from imaginary geometry}\label{sec:main-proof}

In this section, we describe the proof outline of Theorem~\ref{thm:carpet} using imaginary geometry theory~\cite{IG1, IG4}. For convenience, we will work with chordal SLE$_\kappa$ processes in the box $\mathbb B=(-1,1)^2$.

\subsection{Setting up the fractal percolation}\label{subsec:setup-ig}

Let $\kappa \in (6,8)$, $\underline\kappa = 16/\kappa \in (2,8/3)$, and $\chi = \frac{2}{\sqrt{\underline\kappa}} - \frac{\sqrt{\underline\kappa}}{2} \in (\frac{1}{\sqrt{6}}, \frac{1}{\sqrt{2}})$. We first recall the imaginary geometry (IG) construction of the chordal space-filling SLE$_\kappa$ and chordal SLE$_\kappa$; see~\cite{IG1, IG4} for details. Let $h$ denote the Gaussian free field (GFF) on the box $\mathbb{B}$ with the imaginary geometry boundary conditions on $(\mathbb{B}, -i, i)$ so that the counterflow line of $h$ from $-i$ to $i$ corresponds to a chordal SLE$_\kappa$. For $(\mathbb{H}, 0, \infty)$, the corresponding GFF is a Dirichlet GFF $\widetilde h$ with boundary conditions $\lambda' = \frac{\pi}{\sqrt{\kappa}}$ on $\mathbb{R}_-$ and $-\lambda'$ on $\mathbb{R}_+$. If $\psi$ is a conformal map from $(\mathbb H,0,\infty)$ to $(\mathbb{B}, -i, i)$, then $h$ is given by $h=\widetilde h \circ \psi^{-1}-\chi \arg (\psi^{-1})'$.

For rational points $v \in \overline{\mathbb{B}}$, we can simultaneously define the $\pi/2$ and $-\pi/2$ angle flow lines of $h$ starting from $v$, denoted by $\eta^L(v)$ and $\eta^R(v)$. We orient these flow lines from $-i$ to $i$ after they hit and merge with the boundary. Since flow lines with the same angle will merge together after hitting, these flow lines form a space-filling tree and a space-filling dual tree. There a.s. exists a unique continuous curve $\widetilde \eta$ that traces the entire tree, which is the chordal space-filling SLE$_\kappa$ from $-i$ to $i$. Parameterizing $\widetilde \eta$ using capacity seen from $i$ then yields an ordinary chordal SLE$_\kappa$ from $-i$ to $i$, which is also the counterflow line of $h$ from $-i$ to $i$. Let $\eta$ denote this chordal SLE$_\kappa$. In other words, $\eta$ can be obtained by restricting $\widetilde \eta$ to the set of times when its tip lies on the frontier of the region $\widetilde \eta$ has traversed so far.

We record some qualitative properties that follow from~\cite{IG4} and will be used in this paper. We will elaborate Properties~\eqref{property:simple},~\eqref{property:flow-line-all}, and~\eqref{property:measurable} in more detail in Section~\ref{subsec:ig-prelim}.

\begin{enumerate}[(a)]
    
    \item \label{property:simple} For $\kappa \in (6,8)$, the $\pi/2$ and $-\pi/2$ angle flow lines of $h$ starting from rational points in $\overline{\mathbb{B}}$ are almost surely simple curves and do not cross each other.

    \item \label{property:flow-line-all} For all $v \in \overline{\mathbb{B}}$, let $\sigma_v$ denote the first time that $\widetilde \eta$ hits $v$, and define $\eta^L(v)$ and $\eta^R(v)$ as the left and right boundaries of $\widetilde \eta([\sigma_v,\infty))$. For rational points $v$ in $\overline{\mathbb B}$, these boundaries coincide with the $\pi/2$ and $-\pi/2$ angle flow lines. Moreover, for any $v \in \overline{\mathbb{B}}$, there exists a sequence of rational points $\{v_n\}_{n \geq 1}$ such that $\eta^{\mathsf q}(v_n)$ converges to $\eta^{\mathsf q}(v)$ as $n \to \infty$ for $\mathsf q \in \{L,R\}$ in the sense that for any $\epsilon>0$, $\eta^{\mathsf q}(v_n)$ and $\eta^{\mathsf q}(v)$ coincide after the first time exiting $B(v,\epsilon)$ for all sufficiently large $n$ (depending on $\epsilon$). Hence, $\eta^{\mathsf q}(v)$ are simple for all $v \in \overline{\mathbb{B}}$ and the flow line interaction rule applies to all $v \in \overline{\mathbb{B}}$.

    \item \label{property:measurable} Let $B(z,r) \subset \mathbb{B}$, and for each $v \in B(z, r)$ and $\mathsf q \in \{L,R\}$, let $\tau_v^{\mathsf q}$ be the first exit time of $\eta^{\mathsf q}(v)$ from $B(z,r)$. Then, $\{\eta^{\mathsf q}(v)|_{[0,\tau_v^{\mathsf q}]}\}_{v \in B(z,r), \mathsf q \in \{L,R\}}$ is measurable with respect to $h|_{B(z,r)}$ modulo $2 \pi \chi \mathbb{Z}$.
\end{enumerate}

Observe that if $v \in \mathbb B$ satisfies $\eta^L(v) \cap \eta^R(v)=\{v\}$, then it necessarily lies on the frontier of the range of $\widetilde \eta$ stopped upon hitting $v$; see Figure~\ref{fig:def-n-good} (left). In view of this, the following lemma is immediate from the construction of chordal SLE from space-filling SLE.

\begin{lemma}\label{lem:flow-line-chordal}
    Almost surely, every point $v \in \mathbb{B}$ such that $\eta^L(v) \cap \eta^R(v) = \{v\}$ is on the trace of $\eta$.
\end{lemma}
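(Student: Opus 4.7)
The plan is to prove the contrapositive: if $v\in\mathbb{B}$ does not lie on the trace of $\eta$, then $\eta^L(v)\cap\eta^R(v)\supsetneq\{v\}$. Combined with the description of $\eta$ as the restriction of $\widetilde\eta$ to times when its tip lies on the frontier of the already-traversed region, this will yield the lemma, essentially as the converse of the observation highlighted just before the lemma statement. Given $v\in\mathbb{B}\setminus\eta$, there is a unique bubble $U$ (connected component of $\mathbb{B}\setminus\eta$) containing $v$, and $\widetilde\eta$ enters $\overline U$ at some time $s_U$ and first exits at some time $t_U>s_U$, with $\widetilde\eta(s_U),\widetilde\eta(t_U)\in\eta\cap\partial U$ and $\sigma_v\in(s_U,t_U)$.

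The key step is to exhibit a specific point of $\eta^L(v)\cap\eta^R(v)$ different from $v$, namely the exit point $q:=\widetilde\eta(t_U)$. I would argue this topologically by writing
\[
\widetilde\eta([\sigma_v,\infty))=\widetilde\eta([\sigma_v,t_U])\cup\widetilde\eta([t_U,\infty)),
\]
where the first piece is a closed subset of $\overline U$ containing $v$ on its boundary, the second is a closed subset of $\overline{\mathbb{B}}\setminus U$, and the two share only the point $q$. Hence $q$ is a pinch point of $\widetilde\eta([\sigma_v,\infty))$ separating the ``unfilled bubble'' side (with $v$ on its boundary) from the ``outer future'' side (containing $i$). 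Since $\eta^L(v)$ and $\eta^R(v)$ are by definition the left and right boundaries of $\widetilde\eta([\sigma_v,\infty))$, both emanating from $v$ and proceeding toward $\partial\mathbb{B}$ near $i$, each must traverse $q$, giving $q\in\eta^L(v)\cap\eta^R(v)$ with $q\neq v$.

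The main obstacle I anticipate is upgrading this pointwise topological picture to a statement that holds simultaneously for every $v\in\mathbb{B}\setminus\eta$ on an almost sure event. For rational $v$, the claim is an essentially direct consequence of the flow-line description of $\eta^L(v)$ and $\eta^R(v)$ together with the flow-line interaction rules of~\cite{IG4}, since the flow lines $\eta^L(v)$, $\eta^R(v)$ can be tracked explicitly near the pinch point $q$ using the boundary data induced by $\widetilde\eta([0,\sigma_v])$. For general $v\in U$, I would take rational $v_n\in U$ with $v_n\to v$ and invoke property~(b): each $\eta^{\mathsf q}(v_n)$ passes through $q$ for all sufficiently large $n$, and this passes to the limit in the sense of that property. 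Once the almost sure event is established, the implication holds for every $v\in\mathbb{B}\setminus\eta$ simultaneously, and the lemma follows from the construction of chordal SLE from space-filling SLE.
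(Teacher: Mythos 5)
Your contrapositive formulation is logically identical to the observation the paper places in the sentence just before the lemma (of which it is the contrapositive, not the converse), and the paper treats the lemma as immediate from that observation together with the description of $\eta$ as the restriction of $\widetilde\eta$ to frontier-hitting times, pointing only to Figure~\ref{fig:def-n-good}~(left). Your choice of common point---where $\widetilde\eta$ enters and exits the bubble containing $v$---is exactly what that figure depicts, so the strategy matches the paper's.

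One step that needs more care is the assertion that $\widetilde\eta([\sigma_v,t_U])$ and $\widetilde\eta([t_U,\infty))$ share \emph{only} the point $q$, from which you conclude that $q$ is a cut point of $\widetilde\eta([\sigma_v,\infty))$. For $\kappa\in(4,8)$ the chordal curve $\eta$ has double points and can re-touch $\partial U$ from outside after the bubble is cut off; $\widetilde\eta([t_U,\infty))$ then meets $\partial U$ at further points, and some of these may lie in $\widetilde\eta([\sigma_v,t_U])$ as well, so the set-theoretic intersection can be strictly larger than $\{q\}$ and $q$ need not literally disconnect $\widetilde\eta([\sigma_v,\infty))$. The intended conclusion is nonetheless correct: what one actually needs is that, as the boundary curves $\eta^L(v)$ and $\eta^R(v)$ trace $\partial\widetilde\eta([0,\sigma_v])$ starting at $v$, the only way out of $\overline U$ along that boundary is through the pinch point $q$ where $\widetilde\eta$ enters the bubble. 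This is precisely what the left panel of Figure~\ref{fig:def-n-good} illustrates, and the paper itself does not justify it in any more detail than you do, so the gap is one the paper also leaves implicit; a fully rigorous version would need to argue this via the flow-line interaction rules or the local structure of the space-filling curve near $q$ rather than via the intersection of the two time-slices. Your passage from rational to general $v$ via property~(b) is fine.
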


\begin{figure}[htbp]
    \centering
    \begin{minipage}{0.46\textwidth}
    \includegraphics[width=\textwidth]{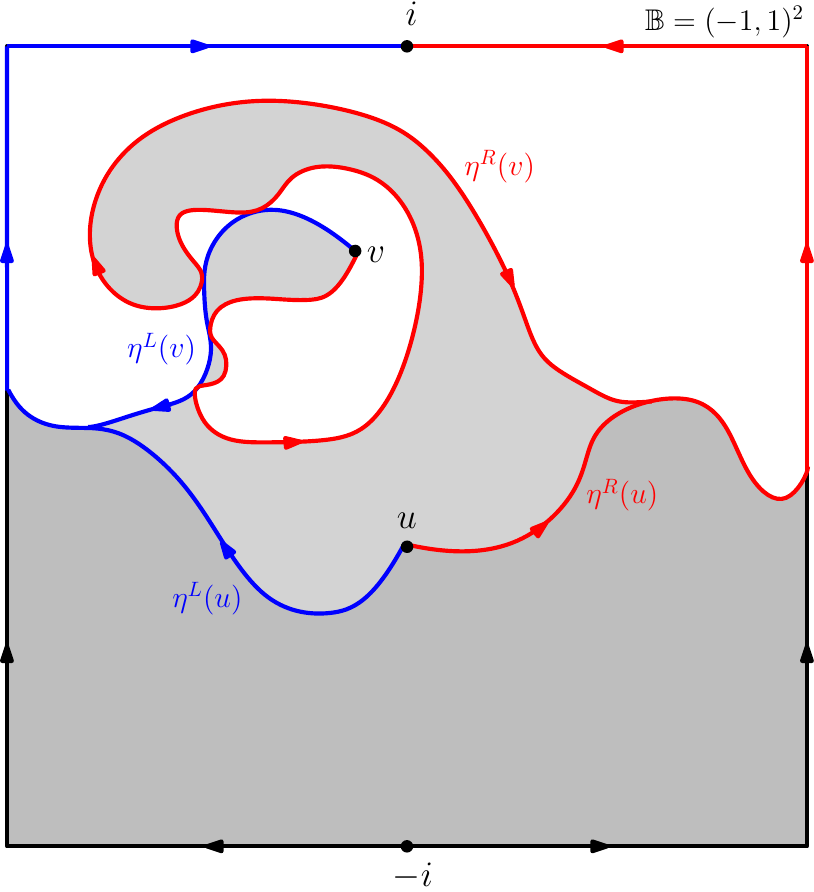}
    \end{minipage}
    \hfill
    \begin{minipage}{0.46\textwidth}
    \includegraphics[width=\textwidth]{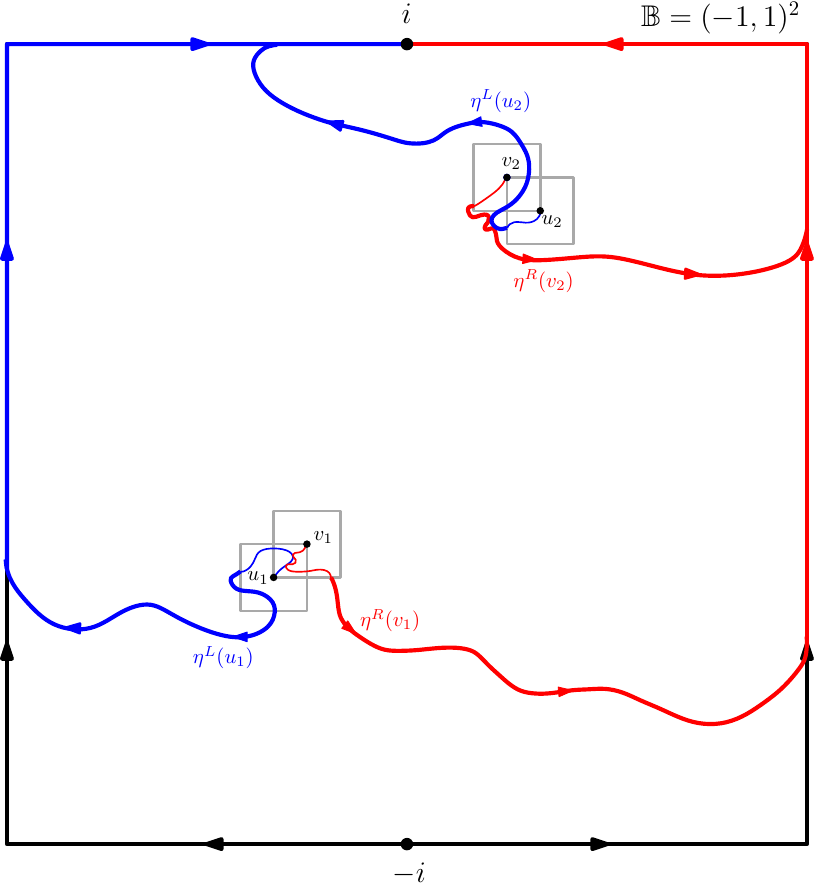}
    \end{minipage}
    \caption{\textbf{Left:} The region in gray (resp.\ in gray or light-gray) represents the range of $\widetilde \eta$ stopped upon hitting $u$ (resp.\ $v$). In this figure, the point $u$ lies on the trace of $\eta$ whereas $v$ does not. \textbf{Right:} In Definition~\ref{def:n-good}, the 0-good property requires that flow lines with different angles, after leaving the $\epsilon$-neighborhoods of starting points (marked in bold), are mutually non-intersecting. In this figure, the pair $(u_1,v_1)$ is admissible whereas $(u_2,v_2)$ is not.}
    \label{fig:def-n-good}
\end{figure}

In light of Lemma~\ref{lem:flow-line-chordal}, we define 
\begin{equation}\label{def:K}
    \mathcal{K} = \{ v \in [-1/2,1/2]^2: \eta^L(v) \cap \eta^R(v) = \{v\} \} \subset \eta([0,\infty]).
\end{equation}
The Hausdorff dimension of the set of points $v \in \overline{\mathbb{B}}$ such that $\eta^L(v) \cap \eta^R(v) = \{v\} $ is expected to be $4-16/\kappa$; see Remark~\ref{rem:dimension-k}. This is smaller than the Hausdorff dimension of $\eta$ which is $1 + \kappa/8$~\cite{Beffara-dimension}, but still tends to 2 as $\kappa$ tends to 8.

To prove Theorem~\ref{thm:carpet}, it suffices to show the following proposition.

\begin{proposition}\label{prop:carpet}
    There exists $\delta_0 > 0$ such that for $\kappa \in (8 - \delta_0,8)$, with positive probability, $\mathcal{K}$ contains a topological Sierpi\'nski carpet.
\end{proposition}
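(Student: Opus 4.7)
The plan is to transplant the fractal percolation scheme of Section~\ref{sec:fractal} into imaginary geometry. I would subdivide $[-1/2,1/2]^2$ into $N$-adic sub-boxes for a fixed large $N$ and attach to each such box $B$ a local \emph{good} event determined by $h$ restricted to a slight enlargement of $B$. Property~\eqref{property:measurable} then guarantees that these events for disjoint boxes at the same scale are independent modulo the $2\pi\chi\mathbb{Z}$ ambiguity, which is exactly what is needed to play the role of the independent Bernoulli variables $\sigma_B$ from Mandelbrot's model. Iterating as in Section~\ref{sec:fractal} yields the notions of $n$-good and $\infty$-good box, and the aim is to show that for $\kappa$ sufficiently close to $8$ the initial box $[-1/2,1/2]^2$ is $\infty$-good with positive probability and that on this event $\mathcal{K}$ contains a topological Sierpi\'nski carpet.

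The definition of the $0$-good event, suggested by Figure~\ref{fig:def-n-good} (right), should be of the following flavor: for every admissible pair of starting points $u,v$ in a slight enlargement of $B$, the flow lines $\eta^L(u)$ and $\eta^R(v)$ do not meet each other after they first exit small neighborhoods of $u$ and $v$. Two properties are needed. First, on $\infty$-good, every point $v$ in the final retained fractal set must satisfy $\eta^L(v)\cap\eta^R(v)=\{v\}$, so that this set is contained in $\mathcal{K}$ via Lemma~\ref{lem:flow-line-chordal}; this is the role of Lemma~\ref{lem:flow-line-not-touch}. Second, the probability of the $0$-good event should tend to $1$ as $\kappa\uparrow 8$; this is the content of Lemma~\ref{lem:kappa-to-8}. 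The intuition behind the second property is that as $\kappa\uparrow 8$ the chordal SLE$_\kappa$ approaches space-filling, so the left and right frontier boundaries of the past of any typical interior point spread apart rapidly, making any subsequent re-intersection atypical.

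Once these ingredients are in place, the induction from $(n-1)$-good to $n$-good mirrors the functional iteration $\vartheta_m=\phi(\vartheta_{m-1})$ in the proof of Proposition~\ref{prop:fractal-rsw}, but carried out at the level of the Gaussian free field to accommodate the $2\pi\chi\mathbb{Z}$ ambiguity inherent in the locality of the good events; this is the role of Lemma~\ref{lem:induction-bound}. Combined with Lemma~\ref{lem:kappa-to-8} and the fixed-point analysis from the proof of Proposition~\ref{prop:fractal-rsw}, one obtains a uniform lower bound (say $2/3$) on the probability that $[-1/2,1/2]^2$ is $m$-good for every $m\geq 0$, and hence by monotone convergence a strictly positive probability that it is $\infty$-good. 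On that event, Proposition~\ref{prop:find-carpet} upgrades goodness into the existence of a topological Sierpi\'nski carpet inside $\mathcal{K}$ by replaying the construction of $A_n^*$ from Section~\ref{sec:fractal}: corners are trimmed at every scale, complementary components are kept at positive pairwise distance by a geometric series estimate, and Whyburn's criterion concludes.

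The main obstacle, as I see it, is Lemma~\ref{lem:kappa-to-8}, the quantitative non-intersection estimate for angle flow lines as $\kappa\uparrow 8$. Unlike the Bernoulli case in Section~\ref{sec:fractal}, the $0$-good event depends on the whole local geometry of flow lines, so extracting a probability that tends to $1$ requires combining the angle-dependent interaction rules of~\cite{IG4} with the fact that the chordal SLE$_\kappa$ becomes space-filling at $\kappa=8$. A secondary technical difficulty is the $2\pi\chi\mathbb{Z}$ ambiguity inherent in property~\eqref{property:measurable}, which prevents one from running a plain independent-Bernoulli recursion and forces the GFF-level induction of Lemma~\ref{lem:induction-bound}.
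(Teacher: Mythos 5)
Your high-level plan matches the paper's architecture in Section~\ref{sec:main-proof}: define a scale-dependent \emph{good} event in terms of angle flow lines, iterate to an $\infty$-good event, show $\theta_m\ge 2/3$ uniformly in $m$ for $\kappa$ close to $8$, and conclude via Lemma~\ref{lem:flow-line-not-touch}, Proposition~\ref{prop:find-carpet}, and the monotonicity of $(m,0)$-goodness from Remark~\ref{rmk:(m,n)-good}\eqref{claim:remark-mn-1}. However, there is a genuine gap in how you propose to run the recursion, and it is the central technical point that makes this problem harder than Mandelbrot's model.

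You assert that Property~\eqref{property:measurable} ``guarantees that these events for disjoint boxes at the same scale are independent modulo the $2\pi\chi\mathbb{Z}$ ambiguity, which is exactly what is needed to play the role of the independent Bernoulli variables~$\sigma_B$.'' This is false. Locality of the events in the sense of measurability with respect to $h|_{B(x,4\epsilon^n)}$ does not give independence, because the GFF on disjoint boxes is globally correlated through its harmonic part; the paper says so explicitly right after Definition~\ref{def:n-good}. If you try to run the plain functional iteration $\vartheta_m=\phi(\vartheta_{m-1})$ from Proposition~\ref{prop:fractal-rsw} with the criterion ``at most one bad sub-box,'' the product structure you need is simply not there, and the step fails. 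You partly acknowledge a difficulty at the end, but you attribute it to the $2\pi\chi\mathbb{Z}$ ambiguity, which is a minor bookkeeping issue; the real obstruction is the correlation of the field. The paper's workaround is twofold and you should notice both parts: first, the $(m,n)$-good criterion (Definition~\ref{def:(m,n)-good}) is not ``all but one sub-box good'' but a percolation-style condition that the union of bad sub-boxes has no connected component of diameter exceeding $\frac{1}{4}\epsilon^n$; second, the recursion $\theta_{m-1}\ge 1-p\Rightarrow\theta_m\ge 1-p$ in Lemma~\ref{lem:induction-bound} is established not by a one-line polynomial estimate but by Proposition~\ref{prop:whole-GFF}, a coarse-graining and Peierls argument for GFF-local events that controls correlations via local absolute continuity (Lemma~\ref{lem:whole-plane-absolute}) and a multi-scale decomposition of the harmonic part (Lemma~\ref{lem:gff-nice}). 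Without this reformulation, the fractal-percolation analogy breaks precisely at the step you treat as automatic.
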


\begin{proof}[Proof of Theorem~\ref{thm:carpet} given Proposition~\ref{prop:carpet}]
    Let $\kappa \in (8-\delta_0, 8)$. Since Proposition~\ref{prop:carpet} is a topological property, it follows from conformal invariance that in any simply connected domain, the trace of a chordal SLE$_\kappa$ curve has a uniformly positive probability, denoted by $\mathfrak p = \mathfrak p(\kappa)$, of containing a topological Sierpi\'nski carpet. To prove Theorem~\ref{thm:carpet}, it suffices to find independent copies of chordal SLE$_\kappa$ in the trace of $\eta$, as we elaborate below. If each copy independently has a uniformly positive probability of containing a topological Sierpi\'nski carpet, then the trace of $\eta$ almost surely contains a topological Sierpi\'nski carpet.

    Let $\eta$ be a chordal SLE$_\kappa$ from $0$ to $\infty$ in $\mathbb{H}$. We parameterize $\eta$ by $[0,\infty)$ so that $\lim_{t \to \infty} \eta(t) = \infty$. Consider $D_k = \mathbb{H} \cap(B(0, 2^{k+1}) \setminus \overline{B(0, 2^k)})$ for $k \geq 1$. For each $k \geq 1$, let $\tau_k$ be the first time that $\eta$ enters $\overline{D}_k$ and $\widetilde{\tau}_k$ be the last time that $\eta$ exits $\overline{D}_k$. 
    By the reversibility of SLE$_\kappa$~\cite{IG3} and the domain Markov property, the conditional law of $\eta|_{[\tau_k, \widetilde{\tau}_k]}$ given $\eta|_{[0, \tau_k] \cup [\widetilde{\tau}_k, \infty)}$ is that of a chordal SLE$_\kappa$ from $\eta(\tau_k)$ to $\eta(\widetilde{\tau}_k)$ in the appropriate connected component of $\mathbb{H} \setminus \eta([0, \tau_k] \cup [\widetilde{\tau}_k, \infty))$. It follows that for any increasing sequence $\{k_n \}_{1 \leq n \leq M}$, conditioned on $E:= \{ \tau_{k_1} < \widetilde{\tau}_{k_1} < \tau_{k_2} < \widetilde{\tau}_{k_2} < \cdots < \tau_{k_M} < \widetilde \tau_{k_M}\}$, the events that $\eta|_{[\tau_k, \widetilde{\tau}_k]}$ contains a topological Sierpi\'nski carpet are independent. Therefore, with conditional probability at least $1 - (1 - \mathfrak p)^M$, at least one of these events occurs, which implies that the trace of $\eta$ contains a topological Sierpi\'nski carpet. Since $\eta$ is a continuous curve tending to infinity, for any fixed $M$, we can choose an increasing sequence $\{k_n\}_{1 \leq n \leq M}$ such that $E$ occurs with probability at least $1 - o_M(1)$. Taking $M$ to infinity yields the desired result.
\end{proof}

\subsection{Finding the Sierpi\'nski carpet}\label{subsec:disconnectivity}

In this section, we outline the proof of Proposition~\ref{prop:carpet} similarly to the proof strategy of Proposition~\ref{prop:carpet-fractal}. The very first idea is that the trace of SLE$_\kappa$ is a random self-similar fractal set, whose dimension $1+\kappa/8$~\cite{Beffara-dimension} is close to 2 when $\kappa$ is close to 8. Therefore, it is plausible that the trace of SLE$_\kappa$ can be compared to a fractal percolation model with parameter $p$ close to 1.

Let $\epsilon \in (0,100^{-1})$ be the coarse-gaining scale, which will be chosen explicitly in Proposition~\ref{prop:sle-rsw}.

\begin{definition}[$n$-good]\label{def:n-good}
We say that the origin is \textbf{$0$-good} if for any $u,v \in (-3/4, 3/4)^2$, the flow lines $\eta^L(u)$ and $\eta^R(v)$ do not intersect after leaving $B(u, \epsilon)$ and $B(v, \epsilon)$, respectively. More precisely, let $\tau_u$ denote the first exit time of $\eta^L(u)$ from $B(u, \epsilon)$, and let $\tau_v$ denote the first exit time of $\eta^R(v)$ from $B(v, \epsilon)$. The $0$-good property requires that $\eta^L(u)|_{[\tau_u,\infty)}$ and $\eta^R(v)|_{[\tau_v,\infty)}$ are disjoint for any $u,v \in (-3/4,3/4)^2$. Note that $u$ and $v$ are not necessarily different. Otherwise, we say the origin is \textbf{$0$-bad}.

For an integer $n \geq 1$ and $x \in \epsilon^n \mathbb{Z}^2 \cap (-3/4,3/4)^2$, we say that $x$ is \textbf{$n$-good} if for any $u,v \in B(x, 3 \epsilon^n)$, the flow lines $\eta^L(u)$ and $\eta^R(v)$ do not intersect after first leaving $B(u, \epsilon^{n+1})$ and $B(v, \epsilon^{n+1})$, respectively, and before leaving $B(x, 4 \epsilon^n)$. Otherwise, we say that $x$ is \textbf{$n$-bad}.
\end{definition}

See Figure~\ref{fig:def-n-good} (right) for an illustration. One should view the $n$-good property as an analog to the $0$-good condition $\sigma_B=1$ in fractal percolation. We introduce an additional labeling $n$ since the $n$-good property here is not homogeneous across scales due to the imaginary geometry boundary conditions, whereas the condition $\sigma_B = 1$ in fractal percolation is homogeneous across scales. Another key difference is that the $n$-goodness is not independent among different vertices on $\epsilon^n \mathbb{Z}^2$; however, it is a local event of the GFF (see Observation~\eqref{claim:remark-mn-3} in Remark~\ref{rmk:(m,n)-good} below), which allows us to control the correlation between distant vertices using the local absolute continuity of the GFF.

The following lemma allows us to find part of $\mathcal{K}$ using the $n$-good property: imagine removing from $[-1/2,1/2]^2$ all boxes $B(x, \epsilon^n)$ where $x$ is an $n$-bad vertices and $n \geq 1$. Assuming that the origin is $0$-good, the retained set is then contained in $\mathcal{K}$. This procedure is similar to that used in fractal percolation. The proof of Lemma~\ref{lem:flow-line-not-touch} will be given in Section~\ref{sec:SLE}.

\begin{lemma}\label{lem:flow-line-not-touch}
    The following holds almost surely. For any $w \in [-1/2,1/2]^2$, suppose that the following conditions are satisfied:
    \begin{enumerate}[(1)]
        \item The origin is $0$-good;
        \item For any $n \geq 1$ and $x \in \epsilon^n \mathbb{Z}^2$ with $w \in B(x, \epsilon^n)$, the point $x$ is $n$-good.
    \end{enumerate}
    Then, $\eta^L(w) \cap \eta^R(w) = \{w \}$, namely, $w \in \mathcal{K}$.
\end{lemma}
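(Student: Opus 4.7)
The plan is to argue by contradiction using a scale-matching argument. Suppose $\eta^L(w) \cap \eta^R(w)$ contains some $z \neq w$, and apply the relevant good-property with $u = v = w$ at a scale chosen so that $z$ witnesses a forbidden intersection of $\eta^L(w)$ and $\eta^R(w)$.

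First I would handle the easy case $|z - w|_\infty \geq \epsilon$. Here hypothesis (1) applies, since $w \in [-1/2, 1/2]^2 \subset (-3/4, 3/4)^2$. Let $\tau^L, \tau^R$ be the first exit times of $\eta^L(w), \eta^R(w)$ from $B(w, \epsilon)$. Since $z \notin B(w,\epsilon)$ lies on both curves, each curve must have first-exited $B(w, \epsilon)$ by the time it reaches $z$, so $z \in \eta^L(w)|_{[\tau^L, \infty)} \cap \eta^R(w)|_{[\tau^R, \infty)}$, contradicting $0$-goodness of the origin.

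For the main case $0 < |z - w|_\infty < \epsilon$, I choose the unique integer $n \geq 1$ with $\epsilon^{n+1} \leq |z - w|_\infty < \epsilon^n$, and let $x \in \epsilon^n \mathbb{Z}^2$ be a nearest lattice point to $w$, so that $|w - x|_\infty \leq \epsilon^n/2$. Then $w \in B(x, \epsilon^n) \subseteq B(x, 3\epsilon^n)$, so by hypothesis (2) the point $x$ is $n$-good. Moreover $|z - x|_\infty \leq |z - w|_\infty + |w - x|_\infty < \epsilon^n + \epsilon^n/2 = 3\epsilon^n/2 < 4\epsilon^n$, so $z \in B(x, 4\epsilon^n)$. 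Since $|z - w|_\infty \geq \epsilon^{n+1}$, the point $z$ lies outside $B(w, \epsilon^{n+1})$, so both $\eta^L(w)$ and $\eta^R(w)$ must have first-exited the small ball $B(w, \epsilon^{n+1})$ by the time they reach $z$. Hence $z$ is an intersection point of $\eta^L(w)$ and $\eta^R(w)$ inside $B(x, 4\epsilon^n)$ occurring after both curves have first-left their respective $\epsilon^{n+1}$-balls, directly contradicting the $n$-good property of $x$ applied with $u = v = w$. The final conclusion $w \in \mathcal{K}$ is then immediate from the definition~\eqref{def:K}.

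The argument is essentially a bookkeeping exercise, and I do not anticipate a serious obstacle. The only thing to verify is the scale-matching: choosing $n$ so that $\epsilon^{n+1} \leq |z - w|_\infty < \epsilon^n$ is precisely what simultaneously places $z$ outside $B(w, \epsilon^{n+1})$ and (together with taking $x$ a nearest lattice point) inside $B(x, 4\epsilon^n)$. Property~\eqref{property:flow-line-all} guarantees that $\eta^L(w)$ and $\eta^R(w)$ are well-defined simple curves for every (not merely rational) $w \in \overline{\mathbb{B}}$, so the relevant first-exit times are meaningful; no flow line interaction beyond the hypotheses is needed.
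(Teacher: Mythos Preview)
Your argument has a genuine gap in the main case $0 < |z-w|_\infty < \epsilon$. The $n$-good property of $x$ only forbids intersections of $\eta^L(u)|_{[\tau_u,\sigma_u]}$ and $\eta^R(v)|_{[\tau_v,\sigma_v]}$, where $\sigma_u,\sigma_v$ are the \emph{first exit times from $B(x,4\epsilon^n)$}. You have verified that $z \in B(x,4\epsilon^n)$, but this does not imply that either flow line reaches $z$ before first exiting $B(x,4\epsilon^n)$. A flow line may well leave $B(x,4\epsilon^n)$, wander around, and later return to hit $z$; in that scenario $z$ lies on the flow line only after time $\sigma$, and the $n$-good property with $u=v=w$ says nothing about it. So the contradiction you claim does not follow.

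This is exactly the obstacle the paper's proof addresses. When $\eta^L(w)$ exits $B(x,4\epsilon^n)$ before hitting $z$, the paper takes $\sigma_1$ to be the \emph{last} such exit time prior to hitting $z$, and lets $w_1$ be the first point after $\sigma_1$ at which $\eta^L(w)$ re-enters $\overline{B(x,\tfrac{5}{2}\epsilon^n)}$. One then checks $w_1 \in B(x,3\epsilon^n)$ and $|w_1-z|_\infty > \epsilon^{n+1}$, and---crucially---invokes Property~\eqref{property:ig-dense} to replace $w_1$ by a nearby point so that $\eta^L(w_1)$ coincides with the tail of $\eta^L(w)$ after $w_1$. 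With this new starting point, $\eta^L(w_1)$ hits $z$ after leaving $B(w_1,\epsilon^{n+1})$ but \emph{before} exiting $B(x,4\epsilon^n)$. An analogous $w_2$ is built for $\eta^R$, and the pair $(w_1,w_2)$ then genuinely violates $n$-goodness. Your bookkeeping for the scale choice and the case $|z-w|_\infty \ge \epsilon$ is fine; what is missing is this relocation of the starting points and the appeal to Property~\eqref{property:ig-dense}.
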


The following lemma, which will also be proved in Section~\ref{sec:SLE}, shows that the $n$-good property holds with probability close to 1 when $\kappa$ is close to 8. This follows from the observation that if the flow lines touch, then the chordal SLE$_\kappa$ will form bubbles. On the other hand, as $\kappa$ tends to 8, one can show that all the bubbles of a chordal SLE$_\kappa$ in the bulk have diameters tending to 0; see Lemma~\ref{lem:bubble-diameter}.

\begin{lemma}\label{lem:kappa-to-8}
    Fix any $q>0$ and $\epsilon < 100^{-1}$. There exists $\delta = \delta(q, \epsilon) >0$ such that for all $\kappa \in (8 - \delta, 8)$, we have
    \begin{align*}
    &\mathbb{P}[\mbox{the origin is } 0 \mbox{-good}] \geq 1-q,\\
    \mbox{and} \quad &\mathbb{P}[x \mbox{ is } n \mbox{-good}] \geq 1 - q \quad \mbox{for all } n \geq 1 \mbox{ and } x \in \epsilon^n \mathbb{Z}^2 \cap (-3/4,3/4)^2.
    \end{align*}
\end{lemma}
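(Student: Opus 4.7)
The plan is to follow the hint given in the text: reduce the event ``$x$ is $n$-bad'' to the existence of a macroscopic bubble of the chordal SLE$_\kappa$ curve $\eta$ in a bounded region, and then invoke Lemma~\ref{lem:bubble-diameter} to see that such bubbles become vanishingly rare as $\kappa \to 8$. The key geometric claim to establish first is this: if $\eta^L(u)$ and $\eta^R(v)$, for $u,v$ in the relevant range, meet at some $w$ after first exiting $B(u, r)$ and $B(v, r)$ (with $r = \epsilon$ in the $0$-good case and $r = \epsilon^{n+1}$ in the $n$-good case for $n\ge 1$), then $\eta$ has a bubble of diameter at least $r$ contained in an $O(\epsilon^n)$-neighborhood of $x$ (resp.\ of $[-3/4, 3/4]^2$). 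The reason from imaginary geometry is that $\eta^L(v) \cup \eta^R(v)$ together bound the range $\widetilde\eta([\sigma_v, \infty))$, so an intersection of $\eta^L(u)$ with $\eta^R(v)$ at a point $w$ far from both $u$ and $v$ forces $\widetilde\eta$ to double back on itself, producing a complementary component of $\eta$ that contains at least one of $u, v$ and whose boundary reaches $w$. The continuum in $u, v$ is handled by approximating with rational starting points using Property~\eqref{property:flow-line-all} and invoking the flow line interaction rules of~\cite{IG1, IG4}.

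Once this geometric reduction is in hand, the $0$-good bound is immediate: if the origin is $0$-bad, then $\eta$ has a bubble of diameter at least $\epsilon$ in a fixed compact subset of $\mathbb{B}$, and by Lemma~\ref{lem:bubble-diameter} this event has probability at most $q$ for $\kappa$ sufficiently close to $8$. For the $n$-good bound with $n \geq 1$ and $x \in \epsilon^n\mathbb{Z}^2 \cap (-3/4, 3/4)^2$, the event is measurable with respect to $h|_{B(x, 4\epsilon^n)}$ modulo $2\pi\chi\mathbb{Z}$ by Property~\eqref{property:measurable}. Since $\dist(x, \partial\mathbb{B}) \geq 1/4$, a Markov decomposition of $h$ on $B(x, 4\epsilon^n)$ followed by rescaling to unit size transports the $n$-bad event, with Radon-Nikodym derivatives uniformly bounded in $L^2$ over $n \ge 1$ and $x$, to a ``unit-scale'' $0$-bad-like event for an IG picture on a unit ball. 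Applying the geometric claim at the unit scale together with a unit-scale version of Lemma~\ref{lem:bubble-diameter} then yields the required bound.

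The main obstacle is the geometric claim itself: extracting a bubble of $\eta$ of the correct size from an intersection of flow lines starting at different bulk points requires careful case analysis in the IG framework, since for $\kappa \in (6,8)$ and angle gap $\pi$ the interaction rules of flow lines admit several patterns that must each be either ruled out or shown to produce a macroscopic bubble. A secondary technical point is transferring Lemma~\ref{lem:bubble-diameter} to the rescaled local picture; this should follow from locality of flow lines (Property~\eqref{property:measurable}) and standard control on the harmonic part of the Markov decomposition.
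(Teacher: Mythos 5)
Your proposal has the right high-level structure — reduce the $n$-bad event to a macroscopic bubble of the SLE, invoke Lemma~\ref{lem:bubble-diameter}, and handle $n\geq 1$ by locality and rescaling — and you correctly flag the geometric extraction of a bubble as the main obstacle. However, the sketch of that geometric step does not match what actually has to happen, and the rescaling is not pinned down.

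On the geometry: the paper does \emph{not} produce a bubble of $\eta$ containing $u$ or $v$. Instead it introduces two auxiliary flow lines $\eta^L(w^+)$ and $\eta^R(w^-)$ emanating from the prime ends to the left of $\eta^R(v)$ and to the right of $\eta^L(u)$ at the intersection point $w$, and uses the interaction rules to show that one of the two pockets (formed by $\eta^L(u)\cup\eta^L(w^+)$ or by $\eta^R(v)\cup\eta^R(w^-)$) must close up before $w$. The resulting pocket sits inside a bubble that has diameter at least $\epsilon$; crucially, depending on which side $\eta^L(u)$ hits $\eta^R(v)$, it may be a bubble of the \emph{time-reversed} curve $\overline\eta$ rather than of $\eta$ itself — which is precisely why Lemma~\ref{lem:bubble-diameter} is stated and proved for both $\eta$ and $\overline\eta$. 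Your description (``a complementary component of $\eta$ that contains at least one of $u,v$'') glosses over this distinction and over the need for the auxiliary flow lines; as stated it is not something the interaction rules straightforwardly deliver. On the rescaling for $n\geq 1$: it is not enough to Markov-decompose and rescale within the IG GFF on $\mathbb B$, because that field is not scale-invariant. The paper passes through the whole-plane GFF modulo $2\pi\chi\mathbb Z$, which \emph{is} exactly scale-invariant, applying Lemma~\ref{lem:transfer} twice (IG field $\to$ whole-plane field, rescale, whole-plane field $\to$ IG field); your ``rescaling to unit size'' gives a zero-boundary GFF on a unit box, not an IG GFF on $\mathbb B$, and the missing step is exactly the exactly-scale-invariant reference measure that makes the bounds uniform in $n$ and $x$.
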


For simplicity, we define $V_0 = \{0\}$ and $V_n = \epsilon^n \mathbb{Z}^2 \cap (-3/4,3/4)^2$. Recall from Definition~\ref{def:n-good} that the $n$-good and $n$-bad properties have been defined for points in $V_n$. We now introduce the $(m,n)$-good property for points in $V_n$, which is an analog of $m$-goodness in fractal percolation.

\begin{definition}[$(m,n)$-good]\label{def:(m,n)-good}
    We define the \textbf{$(m,n)$-good} property for points in $V_n$ and integers $m \geq 0$. If a point is not $(m,n)$-good, we say that it is \textbf{$(m,n)$-bad}.
    The definition is given inductively on $m$. For $m = 0$ and $n \geq 0$, we say $x \in V_n$ is \textbf{$(0,n)$-good} if and only if $x$ is $n$-good.

    For $m \geq 1$, given the definition of $(j,n)$-good for $0 \leq j \leq m-1$, we define the $(m,n)$-good property for $x \in V_n$ as follows. Let $J_x$ be the set of points in $B(x,\epsilon^n) \cap V_{n+1}$ that are either $(m-1,n+1)$-bad or $(0, n+1)$-bad. Consider the collection of closed boxes $\{ \overline{B(w, \epsilon^{n+1})} \}_{w \in J_x}$ and the connected components formed by their union on $\mathbb{C}$, where two boxes that share exactly one point are also viewed as connected. We say that $x$ is \textbf{$(m,n)$-good} if all these components have $|\cdot|_\infty$-diameter at most $\frac{1}{4} \epsilon^n$.
\end{definition}

\begin{remark}\label{rmk:(m,n)-good}
    We will use the following observations along the proof.
    \begin{enumerate}[(1)]
        \item \label{claim:remark-mn-1} For $m \ge 1$, the $(m,n)$-good condition becomes sharper as $m$ increases. Specifically, for any $1 \le m<m'$ and $n \geq 0$, if $x \in V_n$ is $(m',n)$-good, then $x$ is also $(m,n)$-good.
        
        \item \label{claim:remark-mn-3} It follows from Property~\eqref{property:measurable} that for any $n \geq 1$ and $x \in V_n$, the event that $x$ is $(m,n)$-good is measurable with respect to $h|_{B(x, 4 \epsilon^n)}$ modulo $2 \pi \chi \mathbb{Z}$.
    \end{enumerate}
\end{remark}

For $m \geq 0$, we define the probability
\begin{equation}\label{eq:def-thetam}
    \theta_m = \theta_m(\epsilon) := \inf_{n \geq 0} \inf_{x \in V_n} \mathbb{P}[x \mbox{ is } (m,n)\mbox{-good}].
\end{equation}
Note that when $m=0$, the $(m,n)$-good property is equivalent to the $n$-good property. Hence, the inequalities in Lemma~\ref{lem:kappa-to-8} are equivalent to $\theta_0 \geq 1-q$. The following lemma is an analog of the induction inequality in Proposition~\ref{prop:fractal-rsw}. Since the $(m,n)$-good property is not independent among different vertices, we need an alternative argument relying on the percolation of the GFF, which will be postponed to Section~\ref{sec:GFF}. The proof is based on Observation~\eqref{claim:remark-mn-3} in Remark~\ref{rmk:(m,n)-good}, a coarse-graining argument for the GFF, and Peierls' argument; however, it does not require any knowledge of SLE.

\begin{lemma}\label{lem:induction-bound}
    There exists a universal constant $c_1>0$ and a function $\mathcal{E}: (0,1) \to (0,1)$ such that for all $\chi \in (\frac{1}{\sqrt{6}}, \frac{1}{\sqrt{2}})$, $p \in (0, c_1)$, $\epsilon \in (0,  \mathcal{E}(p))$ and $m \geq 1$, the following holds:
    \[ \mbox{If } \min\{ \theta_0, \theta_{m-1}\} \geq 1-p, \mbox{ then } \theta_m \geq 1-p. \]
\end{lemma}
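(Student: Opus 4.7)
The plan is to run a Peierls-type counting argument at the finer scale $\epsilon^{n+1}$, with the real work going into a decoupling estimate for the GFF. The SLE/flow-line structure does not enter the proof: I would only use that $(m,n)$-goodness is defined through a recursion of local GFF events (Observation~\eqref{claim:remark-mn-3} in Remark~\ref{rmk:(m,n)-good}).

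First I would reduce the $(m,n)$-bad event at $x \in V_n$ to a long-bad-path event at scale $\epsilon^{n+1}$. By Definition~\ref{def:(m,n)-good}, if $x$ is $(m,n)$-bad then $J_x \subset V_{n+1} \cap B(x,\epsilon^n)$ contains a $*$-connected cluster of sub-boxes whose union has $|\cdot|_\infty$-diameter exceeding $\epsilon^n/4$; a standard topological extraction produces a self-avoiding $*$-path $w_1,\dots,w_L$ in $V_{n+1}$ with $L \geq L_\epsilon := \lfloor 1/(4\epsilon) \rfloor$, every vertex of which lies in $J_x$ and hence is either $(m-1,n+1)$-bad or $(0,n+1)$-bad. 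Writing $A_w$ for this combined event at $w \in V_{n+1}$, the inductive hypothesis and a union bound give $\mathbb{P}[A_w] \leq 2p$. I then apply a union bound over paths: there are $O(\epsilon^{-2})$ possible starting vertices in $B(x,\epsilon^n)\cap V_{n+1}$ and at most $8^L$ self-avoiding $*$-paths of length $L$ from a fixed vertex. Hence it suffices to prove, for each fixed path, a bound
\begin{equation*}
    \mathbb{P}\Big[\bigcap_{i=1}^{L} A_{w_i}\Big] \;\leq\; (C_1 p)^{L/C_0}
\end{equation*}
for universal constants $C_0, C_1$. Summing over $L \geq L_\epsilon$ and starting points yields $\mathbb{P}[x \text{ is } (m,n)\text{-bad}] \leq C \epsilon^{-2} \bigl(8(C_1 p)^{1/C_0}\bigr)^{L_\epsilon}$ up to a geometric factor, which is at most $p$ once $p < c_1 := (2 \cdot 8^{C_0} C_1)^{-1}$ and $\epsilon < \mathcal{E}(p)$ for a suitable threshold $\mathcal{E}(p)$; taking infima over $n$ and $x$ then gives $\theta_m \geq 1-p$.

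The main obstacle is proving the path bound in the presence of GFF dependence. To decouple, I would use Observation~\eqref{claim:remark-mn-3}: $A_w$ is measurable with respect to $h|_{B(w,4\epsilon^{n+1})}$ modulo $2\pi\chi\mathbb{Z}$. A greedy thinning of the path extracts a subsequence $w_{i_1},\dots,w_{i_{L'}}$ with pairwise $|\cdot|_\infty$-distance at least $10\epsilon^{n+1}$ and $L' \geq L/C_0$; for these the balls $D_j := B(w_{i_j}, 4\epsilon^{n+1})$ are pairwise disjoint. The Markov property of the GFF gives $h|_{D_j} = \mathfrak h_j + \widetilde h_j$, with $\mathfrak h_j$ the harmonic extension from $\partial D_j$ (determined by $h|_{\mathbb{B}\setminus \cup_j D_j}$) and the $\widetilde h_j$ independent zero-boundary GFFs on the $D_j$; conditional on $(\mathfrak h_j)_j$, the events $A_{w_{i_j}}$ are independent. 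The real difficulty is that $\mathbb{P}[A_{w_{i_j}} \mid \mathfrak h_j]$ need not be bounded by $2p$: the $2\pi\chi\mathbb{Z}$-invariance of $A_w$ only absorbs \emph{constant} global shifts of the field, while $\mathfrak h_j$ is a non-constant harmonic function. I would address this by combining (i) a Borell--TIS / Gaussian concentration bound showing that the oscillation $\mathrm{osc}_{D_j}(\mathfrak h_j) := \sup_{D_j} \mathfrak h_j - \inf_{D_j} \mathfrak h_j$ has a sub-Gaussian tail with an $\epsilon$-independent scale, and (ii) a quantitative absolute-continuity comparison: on the event $\{\mathrm{osc}_{D_j}(\mathfrak h_j) \leq M\}$, after subtracting from $\mathfrak h_j$ the integer multiple of $2\pi\chi$ closest to $\mathfrak h_j(w_{i_j})$ (which leaves $A_{w_{i_j}}$ unchanged by the mod-$2\pi\chi\mathbb Z$ invariance), one is left with a harmonic perturbation of $L^\infty$-norm at most $M + 2\pi\chi$, whose effect on the law of $h|_{D_j}$ can be compared to the unconditional law via a Gaussian Radon--Nikodym bound of the form $e^{C M^2}$, uniformly in $\chi \in (\frac{1}{\sqrt 6},\frac{1}{\sqrt 2})$. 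Picking $M$ to be a large absolute constant so that $e^{C M^2}$ is a universal number and the Borell-TIS exceedance $e^{-c M^2}$ is negligible, one obtains $\mathbb{E}\bigl[\prod_j \mathbb{P}[A_{w_{i_j}}\mid \mathfrak h_j]\bigr] \leq (C_1 p)^{L'}$, closing the path bound. This quantitative Gaussian step is where the main work of the lemma lies, and the restriction to the compact range $\chi \in (\frac{1}{\sqrt 6},\frac{1}{\sqrt 2})$ is used only to keep the constants in (ii) universal.
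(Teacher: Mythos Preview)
Your high-level architecture matches the paper's: reduce $(m,n)$-badness to a long $*$-path of bad sub-vertices, thin to get separation, use the Markov decomposition $h|_{D_j}=\mathfrak h_j+\widetilde h_j$ so that conditionally on the harmonic parts the events are independent, and then handle the harmonic parts by absolute continuity modulo $2\pi\chi\mathbb Z$. The paper carries out exactly this scheme (Section~\ref{sec:GFF}, in particular the concluding argument in Section~\ref{subsec:outline-prop5.1}).

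The gap is in your decoupling step. You assert that a fixed absolute $M$, together with the marginal Borell--TIS bound $\mathbb{P}[\mathrm{osc}_{D_j}(\mathfrak h_j)>M]\le e^{-cM^2}$, yields
\[
\mathbb{E}\Big[\prod_{j=1}^{L'} \mathbb{P}[A_{w_{i_j}}\mid \mathfrak h_j]\Big]\le (C_1 p)^{L'}.
\]
This does not follow. Writing $G_j=\{\mathrm{osc}_{D_j}(\mathfrak h_j)\le M\}$, your absolute-continuity step only gives $\mathbb{P}[A_{w_{i_j}}\mid \mathfrak h_j]\le C_2 p\,\mathbf 1_{G_j}+\mathbf 1_{G_j^c}$, so the expectation is controlled by $\mathbb{E}\big[(C_2 p)^{\#\{j:G_j\}}\big]$, and you need that \emph{many} of the $G_j$ hold simultaneously. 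But the oscillations $\mathrm{osc}_{D_j}(\mathfrak h_j)$ are functionals of the \emph{same} field outside $\bigcup_j D_j$ and are genuinely correlated: for two balls at separation comparable to their radius the covariance of the relevant increments is of order one, and along a line of $L'$ balls the covariance decays only like $1/|j-k|$. For correlated Gaussians with a nontrivial common component one has $\mathbb{P}\big[\bigcap_j G_j^c\big]\gtrsim e^{-cM^2}$ \emph{independently of $L'$} (think of $X_j=\sqrt\rho\,W+\sqrt{1-\rho}\,Z_j$), so a fixed absolute $M$ cannot force a product-type bound. In short, marginal sub-Gaussian tails for $\mathrm{osc}_{D_j}(\mathfrak h_j)$ are not enough; you need joint control of how many $j$ can have large oscillation along the path, and your sketch provides none.

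This is precisely what the paper's extra machinery supplies. Instead of trying to bound $\mathbb{P}[\bigcap_j G_j^c]$ directly, the paper performs a multiscale decomposition of the harmonic part (equation~\eqref{eq:gff-decompose} and Definition~\ref{def:nice-boxes}) and proves that the set $\mathscr U$ of vertices with large oscillation is deterministically covered by well-separated boxes at a hierarchy of scales (Lemma~\ref{lem:gff-nice}). The Peierls path is then run on a ``fractal'' graph where steps may be either unit boxes or these big covering boxes; Lemma~\ref{lem:path} shows any long self-avoiding connected path in this graph has at most $(C_1)^L$ choices and at least half of its steps are unit boxes. On those unit boxes the oscillation is small by construction, so your absolute-continuity step applies cleanly there (this is Lemma~\ref{lem:sec5-pre}), and the product bound goes through with $\lambda^{L/2}$ replacing $(C_1 p)^{L'}$. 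Your proposal is essentially the paper's argument with the coarse-graining layer (Lemmas~\ref{lem:gff-nice} and~\ref{lem:path}) removed; without it the product bound is unjustified.
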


The following proposition is analogous to Proposition~\ref{prop:fractal-rsw}.

\begin{proposition}\label{prop:sle-rsw}
    There exist $\delta > 0$ and a dyadic $\epsilon < 100^{-1}$ such that the following holds:
    \[ \theta_m \geq \tfrac{2}{3} \quad \mbox{for all } m \geq 0 \mbox{ and } \kappa \in (8 - \delta,8). \]
\end{proposition}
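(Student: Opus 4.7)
The plan is to obtain $\theta_m \geq 2/3$ uniformly in $m$ by a straightforward induction, using Lemma~\ref{lem:induction-bound} as the inductive step and Lemma~\ref{lem:kappa-to-8} to supply the base case. The key observation is that both lemmas are phrased precisely so as to mesh: the induction preserves a bound of the form $\theta_m \geq 1-p$ provided we have $\theta_0 \geq 1-p$, and the second lemma lets us push $\theta_0$ arbitrarily close to $1$ by taking $\kappa$ close to $8$.

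First I would fix the percolation threshold $p$. Choose $p := \tfrac{1}{2}\min\{c_1, 1/3\}$, where $c_1$ is the universal constant in Lemma~\ref{lem:induction-bound}, so that $p \in (0, c_1)$ and $1 - p \geq 2/3$. With $p$ fixed, select a dyadic $\epsilon$ satisfying $\epsilon < \min\{\mathcal{E}(p),\, 100^{-1}\}$, using that dyadic rationals accumulate at $0$. This single choice of $\epsilon$ is compatible both with the hypothesis $\epsilon \in (0,\mathcal{E}(p))$ of Lemma~\ref{lem:induction-bound} and with the hypothesis $\epsilon < 100^{-1}$ of Lemma~\ref{lem:kappa-to-8}.

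Next I would apply Lemma~\ref{lem:kappa-to-8} with $q = p$ and the chosen $\epsilon$ to produce $\delta_1 = \delta(p,\epsilon) > 0$ such that $\theta_0 \geq 1-p$ for all $\kappa \in (8-\delta_1, 8)$. Set $\delta := \min\{\delta_1, 2\}$, which forces $\kappa \in (6,8)$ and hence ensures $\chi \in (1/\sqrt{6}, 1/\sqrt{2})$, the range required by Lemma~\ref{lem:induction-bound}. With these choices, induct on $m$: the base case $m=0$ is the bound on $\theta_0$ just established, and if $\theta_{m-1} \geq 1-p$, then $\min\{\theta_0, \theta_{m-1}\} \geq 1-p$ and the hypotheses of Lemma~\ref{lem:induction-bound} are in force, yielding $\theta_m \geq 1-p \geq 2/3$. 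Since $p$, $\epsilon$, and $\delta$ are independent of $m$, this gives the uniform conclusion.

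There is no substantive obstacle in this step: the proposition amounts to choosing the parameters $p$, $\epsilon$, $\delta$ in the correct order so that the two lemmas can be chained. All the genuine difficulty is hidden in Lemma~\ref{lem:induction-bound} (the GFF percolation/Peierls argument that propagates the good-event probability across scales) and Lemma~\ref{lem:kappa-to-8} (the SLE input showing that bubbles shrink as $\kappa \uparrow 8$, making flow-line interactions local). The only care required is to fix $p$ before $\epsilon$, and $\epsilon$ before $\delta$, so that each lemma's parameters are legal when we invoke it.
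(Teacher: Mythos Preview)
Your proof is correct and follows essentially the same argument as the paper: fix $p$ small enough (the paper uses $q=\min\{c_1/2,1/3\}$), then pick a dyadic $\epsilon<\min\{100^{-1},\mathcal E(p)\}$, apply Lemma~\ref{lem:kappa-to-8} to get $\theta_0\geq 1-p$, and induct via Lemma~\ref{lem:induction-bound}. Your explicit cap $\delta\leq 2$ to force $\chi\in(1/\sqrt 6,1/\sqrt 2)$ is a harmless precaution that the paper leaves implicit in its standing assumption $\kappa\in(6,8)$.
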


\begin{proof}[Proof of Proposition~\ref{prop:sle-rsw} given Lemmas~\ref{lem:kappa-to-8} and~\ref{lem:induction-bound}]
    Applying Lemma~\ref{lem:kappa-to-8} with $q = \min \{c_1/2, 1/3 \}$ and a dyadic $\epsilon < \min \{100^{-1}, \mathcal E(q) \}$, we obtain a constant $\delta = \delta(q, \epsilon)>0$ such that $\theta_0 \geq 1-q$ for all $\kappa \in (8 - \delta, 8)$. Combining this with Lemma~\ref{lem:induction-bound} and an induction argument yields that $\theta_m \geq 1- q \geq 2/3$ for all $m \geq 0$ and $\kappa \in (8 - \delta, 8)$.
\end{proof}

The following proposition, which will be proved in Section~\ref{sec:carpet}, is based on Lemma~\ref{lem:flow-line-not-touch} and observations similar to~\eqref{eq:fractal-carpet-1} in Proposition~\ref{prop:carpet-fractal}.

\begin{proposition}\label{prop:find-carpet}
    For any dyadic $\epsilon<100^{-1}$, on the event that the origin is $(m,0)$-good for all $m \geq 0$, the set $\mathcal{K}$ contains a topological Sierpi\'nski carpet.
\end{proposition}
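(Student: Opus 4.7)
The plan is to mimic the inductive construction in the proof of Proposition~\ref{prop:carpet-fractal}, where the hypothesis that the origin is $(m,0)$-good for all $m \ge 0$ plays the role of $[0,1]^2$ being $\infty$-good. For $n \ge 0$, set $\mathcal{G}^\infty_n := \{x \in V_n : x \text{ is } (m,n)\text{-good for all } m \ge 0\}$, so the hypothesis reads $0 \in \mathcal{G}^\infty_0$.

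The key structural observation I will establish first is that for every $x \in \mathcal{G}^\infty_n$, writing $\widetilde J_x := (V_{n+1} \cap B(x,\epsilon^n)) \setminus \mathcal{G}^\infty_{n+1}$, the union $\bigcup_{w \in \widetilde J_x} \overline{B(w, \epsilon^{n+1})}$ has all connected components of $|\cdot|_\infty$-diameter at most $\tfrac{1}{4}\epsilon^n$ (with boxes meeting at a single point viewed as connected, as in Definition~\ref{def:(m,n)-good}). To see this, let $J_x^{(m-1)}$ denote the set appearing in that definition for parameter $m$: the sequence $\{J_x^{(m-1)}\}_{m \ge 1}$ is non-decreasing by Remark~\ref{rmk:(m,n)-good}\eqref{claim:remark-mn-1}, its union equals $\widetilde J_x$, and it lives inside the finite set $V_{n+1} \cap B(x,\epsilon^n)$. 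Hence it stabilizes at $\widetilde J_x$ for all sufficiently large $m$, and the $(m,n)$-goodness of $x$ for such $m$ yields the diameter bound.

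Given this, I would recursively construct nested compacta $[-1/2,1/2]^2 = A_0^* \supseteq A_1^* \supseteq \cdots$, paralleling the construction in the proof of Proposition~\ref{prop:carpet-fractal}. At step $n \ge 1$, I remove from $A_{n-1}^*$ the open filled clusters $\bigcup_{w \in C} B(w,\epsilon^n)$ for each connected cluster $C$ of vertices in $V_n \setminus \mathcal{G}^\infty_n$ whose filled region lies inside $A_{n-1}^*$, producing an intermediate set $A_n^\dagger$; then $A_n^*$ is obtained from $A_n^\dagger$ by additionally removing $\epsilon^{n+1}$-scale sub-boxes wherever two complementary components of $A_n^\dagger$ would otherwise have closures touching in only a finite set, exactly as in the fractal-percolation construction. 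The cluster-diameter bound $\tfrac{1}{4}\epsilon^{n-1}$ (applied at each $\mathcal{G}^\infty_{n-1}$-vertex via the structural observation) is the substitute for the ``at most one bad sub-box'' property of fractal percolation and keeps the removals at scale $\epsilon^n$ well-separated.

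The verification then follows the fractal-percolation argument line by line. A distance induction in the style of $d(\mathcal{Q}_1^{k+m},\mathcal{Q}_2^{k+m}) \ge N^{-k}(1-2N^{-1}-\cdots)$ from the proof of Proposition~\ref{prop:carpet-fractal} shows that distinct complementary components of $A_\infty^* := \bigcap_n A_n^*$ have pairwise disjoint closures and spherical diameters tending to zero. Since $A_\infty^* \subseteq \mathcal{K}$ lies on the trace of $\eta$, which has Lebesgue measure zero for $\kappa<8$, the set $A_\infty^*$ has empty interior, so Whyburn's criterion identifies it as a topological Sierpi\'nski carpet. The inclusion $A_\infty^* \subseteq \mathcal{K}$ follows from Lemma~\ref{lem:flow-line-not-touch}: for any $w \in A_\infty^*$, every $x \in V_n$ with $w \in B(x,\epsilon^n)$ must lie in $\mathcal{G}^\infty_n$ (else $w$ would have been removed at step $n$) and is therefore $(0,n)$-good, i.e., $n$-good; together with the origin being $(0,0)$-good, this is exactly what the lemma requires. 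The main obstacle I anticipate is the geometric bookkeeping of the spacer step across all scales---specifically, carrying out the inductive distance estimate of Proposition~\ref{prop:carpet-fractal} with bad regions of variable shape rather than single $N$-adic sub-boxes---where the fact that $\tfrac{1}{4}\epsilon^{n-1}$ is much smaller than the parent scale $\epsilon^{n-1}$ provides ample room for the spacers but the argument must be made precise.
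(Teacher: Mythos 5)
Your proposal follows the paper's proof essentially verbatim: you introduce the $(\infty,n)$-good classification (the paper writes $(\infty,n)$-bad), establish the cluster-diameter bound $\tfrac14\epsilon^n$ at each $(\infty,n)$-good vertex by observing that the monotone, eventually-stabilizing sequence $J_x^{(m-1)}$ exhausts the $(\infty,n+1)$-bad set, build the nested compacta with corner spacers as in Proposition~\ref{prop:carpet-fractal}, run the same geometric distance induction, and invoke Lemma~\ref{lem:flow-line-not-touch} together with Whyburn's criterion. One small caution on the construction step: you restrict the removal at stage $n$ to bad clusters ``whose filled region lies inside $A_{n-1}^*$,'' but for the verification that every $x\in V_n$ with $w\in B(x,\epsilon^n)$ and $w\in A_\infty^*$ is $(\infty,n)$-good (and hence $n$-good) you need to remove \emph{every} $(\infty,n)$-bad box meeting the current set, as the paper does; clusters straddling $\partial A_{n-1}^*$ must not be exempted, otherwise the hypothesis of Lemma~\ref{lem:flow-line-not-touch} can fail for boundary-adjacent points of $A_\infty^*$.
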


Finally, we complete the proof of Proposition~\ref{prop:carpet} assuming Lemmas~\ref{lem:flow-line-not-touch}, \ref{lem:kappa-to-8} and \ref{lem:induction-bound}, as well as Proposition~\ref{prop:find-carpet}.

\begin{proof}[Proof of Proposition~\ref{prop:carpet}]
    Note that Proposition~\ref{prop:sle-rsw} combined with Observation~\eqref{claim:remark-mn-1} in Remark~\ref{rmk:(m,n)-good} implies that the event in Proposition~\ref{prop:find-carpet} occurs with probability at least $1/3$.
\end{proof}

\section{Regularity estimates for SLE$_\kappa$ when $\kappa$ is close to 8}\label{sec:SLE}

In this section, we analyze the behavior of flow lines and SLE$_\kappa$ as $\kappa$ tends to 8. First, in Section~\ref{subsec:sle-gff-prelim}, we will collect the definitions and basic properties of different variants of SLE and GFF; the latter will also be used several times in Section~\ref{sec:GFF}. In Section~\ref{subsec:ig-prelim}, we provide an overview of the imaginary geometry coupling of SLE and GFF, and the construction of space-filling SLE. Specifically, we will make sense of the flow lines emanating from every point as mentioned in Section~\ref{subsec:setup-ig}. In Section~\ref{subsec:flow-line-not-touch}, we prove Lemma~\ref{lem:flow-line-not-touch}. Finally, we will prove Lemma~\ref{lem:kappa-to-8} in Section~\ref{subsec:n-good-probability}.

\subsection{Preliminaries on SLE and GFF}\label{subsec:sle-gff-prelim}

As mentioned in~\eqref{eq:loewner}, chordal SLE$_\kappa$ is a random curve generated by a family of conformal maps $(g_t)_{t \ge 0}$ that solve the Loewner equation with driving function $W_t=\sqrt{\kappa} B_t$. In this work, we will also consider variants of SLE$_\kappa$ called chordal SLE$_\kappa(\underline{\rho})$, in which one keeps track of several additional points, which we refer to as force points.

For $\kappa>0$, SLE$_\kappa(\underline{\rho})$ processes on the upper half-plane $\mathbb H$ with force points $x_1,\ldots,x_m \in \mathbb{R}$ and weights $\rho_1,\ldots,\rho_m$ are defined by the same Loewner equation~\eqref{eq:loewner}, except that the driving function $W$ is given by the solution of
\begin{equation}\label{eq:loewner-general}
    \dd W_t=\sqrt{\kappa} \dd B_t+\sum_{j=1}^m \frac{\rho_j}{W_t-g_t(x_j)} \dd t,
\end{equation}
where the evolutions of force points, $(g_t(x_j))_{t \ge 0}$, are determined by solving~\eqref{eq:loewner} for $z \in \overline{\mathbb H}$. It was proved in~\cite{IG1} that~\eqref{eq:loewner-general} has a unique solution up until the continuation threshold $\inf\{t \ge 0:\sum_{j:x_j<0, g_t(x_j)=W_t} \rho_j \wedge \sum_{j:x_j>0, g_t(x_j)=W_t} \rho_j \le -2\}$, and that SLE$_\kappa(\underline{\rho})$ is a continuous curve. For a simply connected domain $D$ with boundary points $x,y$, the chordal SLE$_\kappa(\underline{\rho})$ in $D$ from $x$ to $y$ is defined using conformal maps.

Let us now review the definition and basic properties of the two-dimensional Gaussian free field (GFF). We refer the reader to~\cite{Sheffield-gff-notes, Dubedat-SLE-GFF, WP-gff-notes} for more details. Let $D \subseteq \mathbb C$ be a simply connected domain with harmonically non-trivial boundary. We denote by $H_0(D)$ the Hilbert closure of $C_0^\infty(D)$ under the Dirichlet inner product $\langle f,g \rangle_\nabla=(2\pi)^{-1} \int_D \nabla f \cdot \nabla g \dd x$. Let $(f_n)_{n \ge 1}$ be an orthonormal basis of $H_0(D)$, and let $(\alpha_n)_{n \ge 1}$ be a sequence of i.i.d. standard Gaussian random variables. The random series $\sum_{n \ge 1} \alpha_n f_n$ converges almost surely in the space of distributions, whose limit is called a \emph{zero-boundary} GFF on $D$. The GFF on $D$ with boundary conditions $\xi$ is referred to as the sum of a zero-boundary GFF and a (deterministic) harmonic function that agrees with $\xi$ on $\partial D$.

The whole-plane GFF is defined in a similar way. For $D=\mathbb C$, let $H(D)$ be the Hilbert closure of $\{f \in C^\infty(D): \int_D f \dd x=0\}$ under the Dirichlet inner product $\langle \cdot,\cdot \rangle_\nabla$. Let $(g_n)_{n \ge 1}$ be an orthonormal basis of $H(D)$, and let $(\alpha_n)_{n \ge 1}$ be a sequence of i.i.d. standard Gaussian random variables. Then $\sum_{n \ge 1} \alpha_n g_n$ viewed as a random distribution modulo global additive constant is called a whole-plane GFF. For $s>0$, we can define the whole-plane GFF modulo $s\mathbb Z$ (so that two distributions are equivalent if they differ by an integer multiple of $s$). We first sample a whole-plane GFF $\mathfrak h$ modulo global additive constant. Then we independently sample a uniform random variable $U$ from $[0,s)$ and fix the additive constant of $\mathfrak h$ by requiring that the average of $\mathfrak h$ on the unit circle is equal to $U$. 

For a random distribution $h$ and an open set $V$, we define the $\sigma$-algebra generated by $h|_V$ as the $\sigma$-algebra generated by the distributional pairing $\phi \mapsto (h, \phi)$, where $\phi$ ranges over smooth functions compactly supported in $V$. For $s>0$, we define the $\sigma$-algebra generated by $h|_V$ modulo $s \mathbb Z$ as follows. Fix a reference function $\phi_0$ that is compactly supported in $V$ and satisfies $\int \phi_0 \dd x = 1$. Let $(h,\phi_0) = ns + r$ where $n$ is an integer and $r \in [0,s)$. The $\sigma$-algebra is generated by the distributional pairing $\phi \mapsto (h - ns, \phi)$, where $\phi$ ranges over smooth functions compactly supported in $V$. We extend these notation to a closed set $K$ by taking the intersection of the $\sigma$-algebras with respect to $O(K,\epsilon)$ for $\epsilon>0$, where $O(K,\epsilon)$ is the $\epsilon$-neighborhood of $K$.

We now recall some basic properties of the GFF. The zero-boundary GFF $h$ on a domain $D \subset \mathbb{C}$ with harmonically non-trivial boundary satisfies the domain Markov property. Specifically, for any subdomain $U \subset D$ with harmonically non-trivial boundary, if $h^U$ denotes the harmonic extension of $h|_{D \setminus U}$ on $U$. Then, $h - h^U$ is a zero-boundary GFF on $U$ and is independent of $\sigma(h|_{D \setminus U})$. We also recall the domain Markov property for the whole-plane GFF from~\cite[Lemma 2.2]{GMS-harmonic}. We will only need the case for boxes.

\begin{lemma}[Domain Markov property]\label{lem:whole-plane-markov}
    For any $B(z,r) \subset B(0,R)$, let $\mathfrak h$ be a whole-plane GFF normalized so that $\mathfrak h_R(0) = 0$, where $\mathfrak h_R(0)$ is the average of $\mathfrak h$ over $\partial B_R(0)$. let $\mathfrak h^{z,r}$ be the harmonic extension of $\mathfrak h|_{\mathbb{C} \setminus B(z,r)}$ on $B(z,r)$, and define $\mathring{\mathfrak h}^{z,r} = \mathfrak h - \mathfrak h^{z,r}$. Then, $\mathring{\mathfrak h}^{z,r}$ is independent of $\mathfrak h|_{\mathbb{C} \setminus B(z,r)}$, and has the same law as a zero-boundary Gaussian free field on $B(z,r)$.
\end{lemma}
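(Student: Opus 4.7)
My plan is to adapt the standard proof of the domain Markov property for the zero-boundary GFF to the whole-plane setting, using an orthogonal decomposition of the Dirichlet Hilbert space $H$ underlying the whole-plane GFF induced by the subdomain $B(z,r)$, and carefully tracking how the additive-constant normalization $\mathfrak h_R(0)=0$ interacts with this decomposition.

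First, I would recall from Section~\ref{subsec:sle-gff-prelim} that $\mathfrak h$, viewed modulo constants, is an isonormal process on $H$ with covariance $\langle\cdot,\cdot\rangle_\nabla$. The space $H$ admits an orthogonal decomposition
\[ H = H_0(B(z,r)) \oplus H_{\mathrm{har}}, \]
where $H_0(B(z,r))$ is the closure of $C_c^\infty(B(z,r))$ and $H_{\mathrm{har}}$, its orthogonal complement, consists by a standard integration-by-parts argument of (equivalence classes of) functions that are weakly harmonic on $B(z,r)$. Choosing orthonormal bases of the two subspaces and combining them gives an orthonormal basis of $H$, so the series defining the GFF splits as $\mathfrak h = \mathfrak h^{\mathrm{int}} + \mathfrak h^{\mathrm{har}}$, where $\mathfrak h^{\mathrm{int}}$ and $\mathfrak h^{\mathrm{har}}$ are independent Gaussian fields built from independent Gaussian coefficients. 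The field $\mathfrak h^{\mathrm{int}}$ is supported in $\overline{B(z,r)}$ and its covariance is the Dirichlet Green's function on $B(z,r)$, so its restriction to $B(z,r)$ has the law of a zero-boundary GFF on $B(z,r)$, while $\mathfrak h^{\mathrm{har}}$ is distributionally harmonic on $B(z,r)$.

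Next, I would handle the normalization. Since $\mathfrak h^{\mathrm{int}}$ is supported in $B(z,r) \subset B(0,R)$, its circle average over $\partial B(0,R)$ vanishes, so imposing $\mathfrak h_R(0) = 0$ only fixes the additive constant of $\mathfrak h^{\mathrm{har}}$ and leaves $\mathfrak h^{\mathrm{int}}$ unchanged; in particular $\mathfrak h^{\mathrm{int}}$ remains independent of the (now well-defined as a distribution) $\mathfrak h^{\mathrm{har}}$. Finally, I would identify the two components with the objects in the lemma: on $\mathbb{C} \setminus B(z,r)$ we have $\mathfrak h^{\mathrm{int}} = 0$, so $\mathfrak h|_{\mathbb{C} \setminus B(z,r)} = \mathfrak h^{\mathrm{har}}|_{\mathbb{C} \setminus B(z,r)}$; since $\mathfrak h^{\mathrm{har}}$ is harmonic on $B(z,r)$, its restriction to $B(z,r)$ is the harmonic extension of these boundary values, so $\mathfrak h^{z,r} = \mathfrak h^{\mathrm{har}}|_{B(z,r)}$ and hence $\mathring{\mathfrak h}^{z,r} = \mathfrak h^{\mathrm{int}}|_{B(z,r)}$. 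Both conclusions of the lemma follow from the properties of $\mathfrak h^{\mathrm{int}}$ established above together with the fact that $\mathfrak h|_{\mathbb{C} \setminus B(z,r)}$ is measurable with respect to $\mathfrak h^{\mathrm{har}}$.

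The main obstacle I would expect is technical rather than conceptual: verifying that the projection onto $H_0(B(z,r))$ defines a genuine random distribution (rather than only a modulo-constants object) and that circle averages, harmonic extensions, and the orthogonal-projection statements behave correctly at the level of these rough distributional objects. These verifications are standard and essentially comprise the cited result from~\cite{GMS-harmonic}, relying on approximation by smooth test functions and on the fact that circle averages of the whole-plane GFF form a well-defined Gaussian process whose behaviour under the orthogonal splitting is controlled by the Dirichlet pairing.
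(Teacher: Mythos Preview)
Your proposal is correct and follows the standard argument. Note that the paper does not actually prove this lemma itself: it simply cites \cite[Lemma~2.2]{GMS-harmonic} and states the result for boxes. Your orthogonal decomposition $H = H_0(B(z,r)) \oplus H_{\mathrm{har}}$ together with the observation that the normalization $\mathfrak h_R(0)=0$ only affects the harmonic part (since $\mathfrak h^{\mathrm{int}}$ is supported in $B(z,r)\subset B(0,R)$) is exactly the approach taken in that reference, so there is nothing to compare.
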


The following estimates will be used repeatedly in the proof.

\begin{lemma}\label{lem:gff-estimate-5.11}
    Fix $s \in (0,1)$, there exists a universal constant $C = C(s)>0$ such that the following holds.
    \begin{enumerate}
        \item Let $\mathfrak h$ be a whole-plane GFF normalized so that $\mathfrak h_R(0) = 0$. For any $B(z,r) \subset B(0,R)$, let $\mathfrak h^{z,r}$ be the harmonic extension of $\mathfrak h|_{\mathbb{C} \setminus B(z,r)}$ in $B(z,r)$. Then, for any $t \geq 1$,
        $$
        \mathbb{P} \Big[ \sup_{u,v \in B(z, sr)} |\mathfrak h^{z, r}(u) - \mathfrak h^{z,r}(v)| \geq t \Big] \leq C e^{-t^2 / C}.
        $$ \label{lem5.9-claim-1}
        \item Let $B(z,r)$ and $B(y,l)$ be two boxes such that $B(z,r) \subset B(y,l)$. Let $h_{y,l}$ be a zero-boundary GFF on $B(y,l)$ and let $h^{z,r}_{y,l}$ be the harmonic extension of $h_{y,l}|_{B(y,l) \setminus B(z,r)}$ on $B(z,r)$. Then, for any $t \geq 1$,
        $$
        \mathbb{P} \Big[ \sup_{u,v \in B(z, sr)} |h^{z, r}_{y,l}(u) - h^{z,r}_{y,l}(v)| \geq t \Big] \leq C e^{-t^2 / C}.
        $$ \label{lem5.9-claim-2}
    \end{enumerate}
\end{lemma}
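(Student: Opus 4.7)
\emph{Approach.} I would prove the two claims in parallel, since each concerns the oscillation on $B(z, sr)$ of a smooth Gaussian random harmonic function on $B(z,r)$, namely the harmonic projection of the underlying GFF. The proof combines a uniform variance bound for increments of this harmonic projection (obtained from explicit properties of the Green's function) with standard Gaussian concentration to upgrade the variance bound to a sub-Gaussian tail.

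\emph{Step 1 (Reduction).} By translation and scaling invariances of the GFF, I would reduce to the case $z = 0, r = 1$. In claim (1) the scaling affects the normalization $\mathfrak h_R(0) = 0$ only by an additive constant, which cancels in increments; in claim (2) the scaling sends $B(y,l)$ to another box $B(\tilde y, \tilde l)$ containing $B(0,1)$, and since the asserted bound is uniform in the outer box, this suffices.

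\emph{Step 2 (Covariance of the harmonic projection).} Using the domain Markov decomposition of the GFF on $B(0,1)$ into its harmonic and zero-boundary parts, I would identify the harmonic projection $X$ (equal to $\mathfrak h^{0,1}$ or $h^{0,1}_{y,l}$) as a smooth (real-analytic) Gaussian random field on $B(0,1)$ whose covariance is the difference of regular parts of Green's functions,
\[ K(u,v) \;=\; H_{D_{\mathrm{out}}}(u,v) - H_{B(0,1)}(u,v), \]
where $H_D(u,v) := G_D(u,v) + \log|u-v|$ and $D_{\mathrm{out}}$ is $\mathbb{C}$ (up to an additive constant fixed by the normalization $\mathfrak h_R(0) = 0$) in claim (1) or $B(y,l)$ in claim (2). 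Since $K$ is harmonic in each variable on $B(0,1) \times B(0,1)$, it is smooth there.

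\emph{Step 3 (Uniform variance bound for increments).} The heart of the proof is the estimate
\[ \mathrm{Var}(X(u) - X(v)) \;=\; K(u,u) + K(v,v) - 2K(u,v) \;\leq\; C(s) \]
uniformly for $u,v \in B(0,s)$ and over $D_{\mathrm{out}}$. The point is that although $H_{D_{\mathrm{out}}}(u,v)$ itself can grow logarithmically in the size of $D_{\mathrm{out}}$, the symmetric combination $H_{D_{\mathrm{out}}}(u,u) + H_{D_{\mathrm{out}}}(v,v) - 2 H_{D_{\mathrm{out}}}(u,v)$ is controlled by the second derivatives of $H_{D_{\mathrm{out}}}$ on $B(0, (1+s)/2)$, which by interior regularity for harmonic functions are uniformly bounded on compact subsets of $D_{\mathrm{out}}$, independently of the growing additive part. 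Using domain monotonicity of the Green's function to sandwich the box $B(y,l)$ between its inscribed and circumscribed concentric disks reduces the estimate to an explicit calculation on disks, where $H_{B(0,l)}(u,v) = \log|l^2 - u\bar v| - \log l$ and the symmetric combination is manifestly bounded. This cancellation is the main technical point of the proof.

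\emph{Step 4 (Gaussian concentration).} With the uniform variance bound in hand, the centered Gaussian field $(X(u) - X(v))_{u,v \in B(0,s)}$ has bounded intrinsic metric; because $X$ is smooth (analytic), the modulus of continuity is controlled via a Dudley entropy integral. Combining the resulting bound on $\mathbb{E}\bigl[\sup_{u,v \in B(0,s)} |X(u) - X(v)|\bigr]$ with the Borell--TIS concentration inequality yields
\[ \mathbb{P}\Bigl[\sup_{u,v \in B(0,s)} |X(u) - X(v)| \geq t\Bigr] \;\leq\; C e^{-t^2/C}, \quad t \geq 1, \]
with $C$ depending only on $s$, as desired.
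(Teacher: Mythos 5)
Your proposal is correct and takes essentially the same route as the paper: the paper's proof invokes Green's function estimates and Fernique's inequality to show the expectation of the supremum is uniformly bounded (citing a lemma of Ding--Dunlap), then upgrades to a sub-Gaussian tail via the Borell--TIS inequality. Your Steps 2--4 just unpack what those citations encapsulate, with the Dudley entropy integral standing in for Fernique's inequality, which are interchangeable here.
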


\begin{proof}
    By Green's function estimates and Fernique's inequality, it is easy to see that the expectation of the supremum is uniformly bounded; see, e.g.,~\cite[Lemma 2.9]{DingDunlap20}. The tail estimates then follow from the Borell-TIS inequality, see, e.g.,~\cite[Theoerm 2.1.1]{adler-taylor-fields}.
\end{proof}

We will also rely on the following local absolute continuity property.

\begin{lemma}[Local absolute continuity]\label{lem:whole-plane-absolute}
    Fix $0 < b < b' < 1$ and $T>0$. For any $M>0$ and $\alpha>0$, there exists a constant $C = C(b, b', T, \alpha, M)>0$ such that the following holds. Let $h$ be a zero-boundary GFF on $B(z,r)$. Let $g$ be any deterministic harmonic function in $B(z,r)$ satisfying $\sup_{u,v \in B(z,b'r)} |g(u) - g(v)| \leq M$, and let $s \in (0,T)$. Then, $(h+g)|_{B(z, br)}$ and $h|_{B(z,br)}$ are absolutely continuous with respect to each other viewed as random generalized functions modulo $s \mathbb{Z}$. Furthermore, the Radon-Nikodym derivative $H_g$ of the former with respect to the latter satisfies
        $$
        \max\{ \mathbb{E}[(H_g)^{-\alpha}], \mathbb{E}[(H_g)^\alpha] \} \leq C.
        $$
\end{lemma}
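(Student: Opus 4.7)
The plan is two-fold: absorb the bounded oscillatory part $g - g(z)$ of $g$ via the Cameron--Martin theorem for the GFF, and handle the (possibly unbounded) additive constant $g(z)$ by a scalar density comparison that crucially uses the modulo-$s\mathbb{Z}$ hypothesis.

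Write $c := g(z)$ and $g_0 := g - c$. Then $g_0$ is harmonic on $B(z,r)$ with $g_0(z) = 0$ and $|g_0| \le M$ on $B(z,b'r)$; interior gradient estimates for harmonic functions give $|\nabla g_0| \lesssim M/r$ on any concentric ball $B(z,b''r)$ with $b'' \in (b,b')$. Fix such a $b''$, choose a smooth cutoff $\phi \in C_c^\infty(B(z,b''r))$ with $\phi \equiv 1$ on $B(z,br)$ and $|\nabla \phi| \lesssim 1/r$, and set $\bar g := \phi g_0 \in H_0(B(z,r))$. By the scale-invariance of the two-dimensional Dirichlet energy, $\|\bar g\|_\nabla \le C_*(b,b')\, M$. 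The Cameron--Martin theorem for the zero-boundary GFF then yields that the law of $h+\bar g$ is absolutely continuous with respect to the law of $h$ with Radon--Nikodym derivative
\[
H_{\bar g} \;=\; \exp\!\Bigl((h,\bar g)_\nabla - \tfrac12 \|\bar g\|_\nabla^2\Bigr),
\]
where $(h,\bar g)_\nabla$ is centered Gaussian of variance at most $(C_* M)^2$. Consequently all $\pm\alpha$-moments of $H_{\bar g}$ are bounded in terms of $\alpha$ and $M$. Restricting to $B(z,br)$ (on which $\bar g = g_0$) and projecting onto the $\sigma$-algebra generated modulo $s\mathbb{Z}$ replaces $H_{\bar g}$ by a conditional expectation of itself. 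Its $\pm\alpha$-moments remain bounded by Jensen's inequality applied to the convex function $x^{-\alpha}$, and to $x^\alpha$ when $\alpha \ge 1$; for $\alpha \in (0,1)$ one uses $\mathbb{E}[X^\alpha] \le (\mathbb{E} X)^\alpha = 1$.

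It remains to compare $((h+\bar g) + c) \bmod s\mathbb{Z}$ with $(h+\bar g) \bmod s\mathbb{Z}$ on $B(z,br)$. Recall that, in the convention introduced above Lemma~\ref{lem:gff-estimate-5.11}, a random distribution modulo $s\mathbb{Z}$ is encoded by its pairings against mean-zero test functions in $B(z,br)$ together with the fractional part $(h+\bar g, \phi_0) \bmod s \in [0,s)$. Adding the constant $c$ preserves all mean-zero pairings and translates the fractional part by $c \bmod s$. Conditional on the mean-zero pairings, $(h+\bar g, \phi_0)$ is Gaussian with some random mean and a deterministic strictly positive conditional variance $\sigma_\star^2$ depending only on $\phi_0$ and $B(z,br)$; positivity here follows from the fact that $\phi_0$, having integral $1$, has a nontrivial component orthogonal to mean-zero test functions in the $H^{-1}$ inner product that governs the covariance of the GFF. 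Wrapping this conditional Gaussian onto $[0,s)$ produces a periodic density bounded, uniformly in the conditional mean and in $s \in (0,T)$, between two positive constants $c_0(T)$ and $C_0(T)$. Hence the Radon--Nikodym derivative associated with the shift by $c$ is uniformly bounded above and below by positive constants depending only on $T$.

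Chaining these two absolute continuity statements and applying H\"older's inequality produces the Radon--Nikodym derivative $H_g$ together with the advertised bound on $\mathbb{E}[H_g^{\pm\alpha}]$; mutual absolute continuity is automatic from the two-sided moment bound. The main obstacle is the last step: absent the modulo-$s\mathbb{Z}$ reduction, the additive constant $c = g(z)$ could be arbitrarily large and no uniform Radon--Nikodym control would be possible, so the structure of the statement is essential to the proof.
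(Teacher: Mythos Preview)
Your proof is correct, but it takes a more elaborate route than the paper's. The paper exploits the modulo-$s\mathbb{Z}$ structure at the very first step rather than the last: since $h+g$ and $h+g-ns$ represent the same generalized function modulo $s\mathbb{Z}$, one simply replaces $g$ by $\tilde g := g - ns$ with $n$ chosen so that $\sup_{B(z,b'r)}|\tilde g| \le M+T$. Now $\tilde g$ is \emph{bounded}, not merely of bounded oscillation, so a single Cameron--Martin step (after multiplying by a cutoff, exactly as you do for $g_0$) gives the Radon--Nikodym derivative $H_{\tilde g}$ of $(h+\tilde g)|_{B(z,br)}$ versus $h|_{B(z,br)}$ with all moments controlled; projecting to the modulo-$s$ $\sigma$-algebra and invoking Jensen/H\"older finishes. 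This avoids your third step entirely---there is no need to analyze the conditional law of $(h,\phi_0)$ or wrapped Gaussian densities.

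Your approach, by contrast, splits $g = g_0 + c$ and handles the constant $c$ via a direct density comparison on $[0,s)$. This is valid, though one small imprecision: the wrapped Gaussian density on $[0,s)$ is \emph{not} bounded between constants $c_0(T),C_0(T)$ uniformly in $s\in(0,T)$ (for small $s$ it blows up like $1/s$); what \emph{is} uniformly bounded is the \emph{ratio} of two such densities with the same variance $\sigma_\star^2$ and different means, which is what you actually need. The paper's shortcut sidesteps this and the accompanying verification that $\sigma_\star^2>0$.
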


\begin{proof}
    We first replace $g$ with $\tilde g:= g - ns$ for some integer $n$ such that $\sup_{u \in B(z,b'r)} |\tilde g(u)| \leq M + T$. Similar to~\cite[Lemma 4.1]{MQ18-geodesic}, we can explicitly compute the Radon-Nikodym derivative $H_{\tilde g}$ of $(h+g - ns)|_{B(z, br)}$ and $h|_{B(z,br)}$ viewed as random generalized functions and show that $\max\{ \mathbb{E}[(H_{\tilde g})^{-\beta}], \mathbb{E}[(H_{\tilde g})^\beta] \} \leq C'(\beta)$ for any $\beta>0$. The desired lemma then follows from H\"older's inequality.
\end{proof}

The following lemma, which will be used in Section~\ref{subsec:n-good-probability}, is a consequence of Lemmas~\ref{lem:gff-estimate-5.11} and~\ref{lem:whole-plane-absolute}.

\begin{lemma}\label{lem:transfer}
    For $\kappa \in (6,8)$, let $\mathbb{P}$ be the law of a imaginary geometry GFF on $\mathbb{B}$ as defined at the beginning of Section~\ref{subsec:setup-ig}, and $\widetilde{\mathbb{P}}$ be the law of a whole-plane GFF modulo $2 \pi \chi \mathbb{Z}$. Let $B(z,r) \subset (-7/8,7/8)^2$ and $A$ be an event that is measurable with respect to $h|_{B(z,r)}$ modulo $2 \pi \chi \mathbb{Z}$. Suppose that $\mathbb P[A]=1-o_\nu(1)$ as $\nu \to 0$, then $\widetilde{\mathbb P}[A]=1-o_\nu(1)$ with a rate uniformly in $\kappa$ and $B(z,r)$; and vice versa.
\end{lemma}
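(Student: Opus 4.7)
\emph{Strategy and setup.} The plan is to decompose both $h$ (sampled under $\mathbb P$) and a representative $\mathfrak h$ of a sample under $\widetilde{\mathbb P}$ on a slightly larger ball $B(z,r')$ into a common-law zero-boundary GFF plus a random harmonic piece, via the domain Markov property, and then use Lemma~\ref{lem:whole-plane-absolute} to transfer bounds between the induced laws on $B(z,r)\bmod 2\pi\chi\mathbb Z$. Concretely I would take $r'=2r$ when $r\le 1/16$ and $r'=r+\tfrac{1}{16}$ otherwise, guaranteeing $B(z,r')\subset(-15/16,15/16)^2$ (so $B(z,r')$ is uniformly far from the IG singular points $\pm i$) while the ratio $b:=r/r'$ stays in the compact interval $[1/2,14/15]$. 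Fixing $b':=29/30$, all constants produced by Lemma~\ref{lem:whole-plane-absolute} applied with parameters $(b,b',T)=(b,b',2\pi/\sqrt{2})$ become uniform in $(z,r)$.

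\emph{Decomposition and oscillation control.} By the Markov property, $h|_{B(z,r')}=\mathring h^{\mathbb P}+g^{\mathbb P}$, and by Lemma~\ref{lem:whole-plane-markov}, $\mathfrak h|_{B(z,r')}=\mathring h^{\widetilde{\mathbb P}}+g^{\widetilde{\mathbb P}}$ for two zero-boundary GFFs on $B(z,r')$ of common law; let $\mu_0$ denote the induced law of their restriction to $B(z,r)$ modulo $2\pi\chi\mathbb Z$. To bound $\mathrm{osc}_{B(z,b'r')}g^{\mathbb P}$, I would split $h=h_{\mathbb B,0}+H_{\mathrm{IG}}$, with $h_{\mathbb B,0}$ a zero-boundary GFF on $\mathbb B$ and $H_{\mathrm{IG}}=H\circ\psi^{-1}-\chi\arg(\psi^{-1})'$ the deterministic IG harmonic function. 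Since $\chi$ and $\lambda'=\pi/\sqrt\kappa$ remain in compact sets for $\kappa\in(6,8)$, the smoothness of $H_{\mathrm{IG}}$ on $(-15/16,15/16)^2$ yields $\mathrm{osc}_{B(z,b'r')}H_{\mathrm{IG}}\le M_0$ for a universal constant $M_0$. Combined with Lemma~\ref{lem:gff-estimate-5.11}(2) for $h_{\mathbb B,0}$ and Lemma~\ref{lem:gff-estimate-5.11}(1) for $\mathfrak h$, this gives
\[
    \mathbb P[\mathrm{osc}_{B(z,b'r')}g^{\mathbb P}>M+M_0]+\widetilde{\mathbb P}[\mathrm{osc}_{B(z,b'r')}g^{\widetilde{\mathbb P}}>M]\le Ce^{-M^2/C}.
\]

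\emph{Sandwich and transfer.} Conditional on the harmonic piece lying in the good event above, Lemma~\ref{lem:whole-plane-absolute} applied on $B(z,r')$ with parameters $(b,b',T,\alpha,M+M_0)$ gives absolute continuity of $h|_{B(z,r)}\bmod 2\pi\chi\mathbb Z$ (resp.\ $\mathfrak h|_{B(z,r)}\bmod 2\pi\chi\mathbb Z$) with respect to $\mu_0$, with Radon-Nikodym derivative $H$ satisfying $\max\{\mathbb E_{\mu_0}[H^\alpha],\mathbb E_{\mu_0}[H^{-\alpha}]\}\le C_{M,\alpha}$. Two applications of H\"older's inequality then produce the two-sided polynomial sandwich
\[
    \mu_g(A^c)\le C_{M,\alpha}^{1/\alpha}\mu_0(A^c)^{(\alpha-1)/\alpha},\qquad \mu_0(A^c)\le C_{M,\alpha}^{1/\alpha}\mu_g(A^c)^{(\alpha-1)/\alpha},
\]
uniformly over realizations $g$ of the harmonic piece in the good event, and for both $\mathbb P$ and $\widetilde{\mathbb P}$. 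Now suppose $\mathbb P[A^c]=\nu$ is small; I would pick $M=M(\nu)$ growing slowly with $\log(1/\nu)$ (e.g., $M^2=(\log(1/\nu))^{1/2}$) so that the bad-event bound is smaller than any power of $\nu$ while $C_{M,\alpha}=\nu^{o_\nu(1)}$, since Girsanov's formula makes $C_{M,\alpha}$ grow like $\exp(O(\alpha M^2))$. Markov's inequality then selects a positive-probability subset of the good event on which $\mu_{g^{\mathbb P}}(A^c)\le\sqrt\nu$; the first sandwich inequality forces $\mu_0(A^c)=o_\nu(1)$, and a second application of the sandwich under $\widetilde{\mathbb P}$ together with the bad-event bound yields $\widetilde{\mathbb P}[A^c]=o_\nu(1)$ uniformly in $\kappa$ and $(z,r)$. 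The converse direction is symmetric. The principal challenge I anticipate is preserving uniformity of every constant across all admissible $(z,r,\kappa)$; this is precisely what forces the tailored choice of $r'$ (to keep $b$ bounded away from both $0$ and $1$) and the explicit a priori bound on $H_{\mathrm{IG}}$ (to absorb all $\kappa$-dependence into the single universal constant $M_0$).
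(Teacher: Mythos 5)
Your proposal is correct and takes essentially the same route as the paper: domain Markov decomposition into a common-law zero-boundary GFF plus a random harmonic piece, oscillation control via Lemma~\ref{lem:gff-estimate-5.11}, transfer via the absolute-continuity estimate of Lemma~\ref{lem:whole-plane-absolute}, and a Markov-type selection argument to conclude. The only cosmetic differences are that the paper works with the fixed intermediate box $(-31/32,31/32)^2$ and plain Cauchy--Schwarz ($\alpha=2$) rather than your adaptive box $B(z,r')$ and $\nu$-dependent choice of $M$ (which buys a quantitative rate but is unnecessary for the $o_\nu(1)$ conclusion).
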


\begin{proof}
    Write $h$ for the imaginary geometry GFF on $\mathbb B$. Let $h'$ be the harmonic extension of $h|_{\mathbb B \setminus (-31/32,31/32)^2}$ on $(-31/32,31/32)^2$. For $M>0$, let $E_M$ be the event that
    \begin{equation}\label{eq:bounded-fluctuation}
        \sup_{u,v \in (-15/16,15/16)^2} |h'(u)-h'(v)| \le M.
    \end{equation}    
    Let $\mathcal{F}$ be the $\sigma$-algebra generated by $h|_{\mathbb B \setminus (-31/32,31/32)^2}$, and let $\widehat{\mathbb P}$ be the law of a zero-boundary GFF on $(-31/32,31/32)^2$. By Lemma~\ref{lem:whole-plane-absolute} and Cauchy-Schwarz inequality, on the event $E_M$, we have
    \[ \widehat{\mathbb P}[A^c] =\mathbb E[(H_{h'})^{-1} \mathbf 1_{A^c}|\mathcal{F}] \le \mathbb E[(H_{h'})^{-2}|\mathcal{F}]^{1/2} \cdot \mathbb P[A^c|\mathcal F]^{1/2} \le c(M) \cdot \mathbb P[A^c|\mathcal F]^{1/2} \mbox{ for some constant } c(M)>0.\]
    By Lemma~\ref{lem:gff-estimate-5.11}, the event $E_M$ occurs with probability $1-o_M(1)$ as $M \to \infty$. Sending $M \to \infty$ and then $\nu \to 0$ gives $\widehat{\mathbb P}[A^c]=o_\nu(1)$. Using the same argument, we get $\widetilde{\mathbb P}[A^c]=o_\nu(1)$ which concludes the first assertion. The other direction follows from a similar argument.
\end{proof}

\subsection{Further background on imaginary geometry}\label{subsec:ig-prelim}

In this subsection, we provide a further review of imaginary geometry~\cite{IG1,IG4}. For $\kappa>4$ and $\underline{\kappa}=16/\kappa \in (0,4)$, write
\begin{equation}\label{eq:ig-parameter}
    \lambda=\frac{\pi}{\sqrt{\underline{\kappa}}}, \quad \lambda'=\frac{\pi}{\sqrt{\kappa}}, \quad \mbox{and} \quad \chi=\frac{2}{\sqrt{\underline{\kappa}}}-\frac{\sqrt{\underline{\kappa}}}{2}.
\end{equation}
Let $\eta$ be an SLE$_{\underline{\kappa}}(\underline{\rho})$ from 0 to $\infty$ on $\mathbb H$, with force points located at $\underline{x}=(\underline{x}_L,\underline{x}_R)=(x_{k,L}<\cdots<x_{1,L}<0<x_{1,R}<\cdots<x_{\ell,R})$, and corresponding weights $\underline{\rho}=(\underline{\rho}_L;\underline{\rho}_R)=(\rho_{k,L},\ldots,\rho_{1,L};\rho_{1,R},\ldots,\rho_{\ell,R})$. Let $h$ be the GFF on $\mathbb H$ with boundary data given by
\begin{equation}\label{eq:ig-boundary-data-simple}
    -\lambda(1+\sum_{i=1}^j \rho_{i,L}) \quad \mbox{for} \quad x \in (x_{j+1,L},x_{j,L}] \quad \mbox{and} \quad \lambda(1+\sum_{i=1}^{j'} \rho_{i,R}) \quad \mbox{for} \quad x \in (x_{j',R},x_{j'+1,R}]
\end{equation}
for $0 \le j \le k$ and $0 \le j' \le \ell$, where we set $x_{0,L}=0^-$, $x_{0,R}=0^+$, $x_{\ell+1,L}=-\infty$, $x_{k+1,R}=+\infty$ by convention. By~\cite[Theorem 1.1]{IG1}, $h$ can be coupled with $\eta$ in the following sense. Let $(g_t)_{t \ge 0}$ and $W$ be the family of conformal maps and the driving function associated with $\eta$ in the Loewner equation, and write $f_t=g_t-W_t$. Then, for each stopping time $\tau$ of $\eta$, the conditional law given $\eta([0,\tau])$ of $h \circ f_\tau^{-1}-\chi \arg (f_\tau^{-1})'$ is a GFF on $\mathbb H$ with boundary data given by $-\lambda$ for $x \in (f_\tau(x_{0,L}),0^-]$, $\lambda$ for $x \in (0^+,f_\tau(x_{0,R})]$, and
\[ -\lambda(1+\sum_{i=1}^j \rho_{i,L}) \quad \mbox{for} \quad x \in (f_\tau(x_{j+1,L}),f_\tau(x_{j,L}]) \quad \mbox{and} \quad \lambda(1+\sum_{i=1}^{j'} \rho_{i,R}) \quad \mbox{for} \quad x \in (f_\tau(x_{j',R}),f_\tau(x_{j'+1,R})] \]
for $0 \le j \le k$ and $0 \le j' \le \ell$. Moreover, $\eta$ is a.s. locally determined by $h$ in this coupling~\cite[Theorem 1.2]{IG1}. We call $\eta$ the \emph{flow line} of $h$ from 0 to $\infty$. The flow line of $h$ with angle $\theta \in \mathbb R$ is defined as the flow line of $h+\theta\chi$. 

For $\kappa>4$, SLE$_\kappa(\underline{\rho})$ process $\eta'$ from 0 to $\infty$ on $\mathbb H$ can be coupled with a GFF $-h$ on $\mathbb H$ in a similar manner, where the boundary data of $h$ is given by
\begin{equation}\label{eq:ig-boundary-data-nonsimple}
    \lambda'(1+\sum_{i=1}^j \rho_{i,L}) \quad \mbox{for} \quad x \in (x_{j+1,L},x_{j,L}] \quad \mbox{and} \quad -\lambda'(1+\sum_{i=1}^{j'} \rho_{i,R}) \quad \mbox{for} \quad x \in (x_{j',R},x_{j'+1,R}]
\end{equation}
for $0 \le j \le k$ and $0 \le j' \le \ell$. In this coupling, $\eta'$ is a.s. locally determined by $h$ and is called the \emph{counterflow line} of $h$ from 0 to $\infty$.

We can also define flow lines starting from interior points. Let $\mathfrak h$ be the whole-plane GFF modulo $2\pi\chi \mathbb Z$. Then we can generate flow lines of $\mathfrak h$ starting from a point $z \in \mathbb C$ with different angles, the marginal law of which is a whole-plane SLE$_{\underline{\kappa}}(2-\underline{\kappa})$ process from $z$ to $\infty$~\cite[Theorem 1.1]{IG4}. For $\underline{\kappa} \in (0,8/3]$ (i.e. $\kappa \ge 6$), the flow lines are simple curves (Property~\eqref{property:simple}). Moreover, these flow lines are a.s. locally determined by the field~\cite[Theorem 1.2]{IG4}. We remind that adding a global additive multiple of $2\pi\chi$ to the field does not change its flow lines. Similarly, if we start with a GFF $h$ on a simply connected domain $D \subseteq \mathbb C$ and then view it as a distribution modulo $2\pi\chi \mathbb Z$, we can also define its flow lines starting from interior points. For simplicity, we denote by $\eta^L(v)$ (resp.\ $\eta^R(v)$) the flow line starting from $v$ with angle $\pi/2$ (resp.\ $-\pi/2$).

We now describe the flow line interaction rules, which are elaborated in~\cite[Theorem 1.5]{IG1} for boundary emanating flow lines and in~\cite[Theorem 1.7]{IG4} for interior emanating ones. Let $\theta_c:=\pi\underline{\kappa}/(4-\underline{\kappa})$ be the critical angle. Let $\eta_1,\eta_2$ be two boundary emanating flow lines started from $x_1 \ge x_2$ with angles $\theta_1,\theta_2 \in \mathbb R$, respectively, then the interaction of $\eta_1,\eta_2$ depends on the \emph{angle gap} $\theta_2-\theta_1$: If $\theta_2-\theta_1>0$, then $\eta_1$ almost surely stays to the right of $\eta_2$. Moreover, if $\theta_2-\theta_1 \in (0,\theta_c)$, then $\eta_1$ and $\eta_2$ may bounce off each other, otherwise they almost surely do not intersect. If $\theta_2-\theta_1=0$, then $\eta_1$ merges with $\eta_2$ upon intersecting. If $\theta_2-\theta_1 \in (-\pi,0)$, then $\eta_1$ crosses $\eta_2$ upon intersecting and never crosses back. For interior emanating flow lines $\eta_1,\eta_2$, we have that $\eta_1$ may hit the right side of $\eta_2$ if the angle gap upon intersecting is in $(-\pi,\theta_c)$, and their subsequent behavior can be described in the same way as the boundary emanating case. Note that for $\kappa \in (4,8)$ (so that $\underline{\kappa} \in (2,4)$ and $\theta_c>\pi$), the flow lines $\eta^L(\cdot)$ and $\eta^R(\cdot)$ may bounce off but never cross each other.

We now review the construction of space-filling SLE$_\kappa$~\cite{IG4}. We start with the chordal space-filling SLE$_\kappa(\rho_1;\rho_2)$ from 0 to $\infty$ on $\mathbb H$ for $\kappa>4$ and $\rho_1,\rho_2 \in (-2,\frac{\kappa}{2}-2)$, which extends to other simply connected domains by conformal maps. Let $h$ be a GFF on $\mathbb H$ with boundary data given by $\lambda'(1+\rho_1)$ on $\mathbb R_-$ and $-\lambda'(1+\rho_2)$ on $\mathbb R_+$. Let $(x_k)$ be a countable dense set in $\mathbb H$. We give an ordering of $(x_k)$ by declaring that $x_j$ comes before $x_k$ if $\eta^L(x_j)$ merges with $\eta^L(x_k)$ on its left side, or equivalently, $\eta^R(x_j)$ merges with $\eta^R(x_k)$ on its right side. Here, we also view $\mathbb R_-$ and $\mathbb R_+$ as flow lines (so the interaction rules apply) and define the orientation to be from $\infty$ to 0 if $\rho_j \in (-2,\frac{\kappa}{2}-4]$, and from 0 to $\infty$ if $\rho_j \in (\frac{\kappa}{2}-4,\frac{\kappa}{2}-2)$. In particular, when $\rho_1=\rho_2=0$, the flow lines $\eta^L(\cdot)$ and $\eta^R(\cdot)$ are both oriented toward the target point after merging with the boundary segments. The chordal space-filling SLE$_\kappa(\rho_1;\rho_2)$ is then defined as the unique space-filling curve that visits $(x_n)$ according to this ordering, and is a.s. continuous when parametrized by Lebesgue measure. In this coupling, the curve and the field $h$ almost surely determine each other~\cite[Theorem 1.16]{IG4}. When reparameterized by half-plane capacity, the curve is almost surely equal to the counterflow line of $h$ from 0 to $\infty$, which is an ordinary chordal SLE$_\kappa(\rho_1;\rho_2)$.

We also consider the whole-plane space-filling SLE$_\kappa$ from $\infty$ to $\infty$~\cite[Section 1.4.1]{DMS21-LQG-MRT}, which can be defined similarly to the chordal case. Let $\mathfrak h$ be the whole-plane GFF modulo $2\pi\chi \mathbb Z$. For $v \in \mathbb C$, let $\overline\eta^L(v)$ (resp.\ $\overline\eta^R(v)$) be the flow line of $\mathfrak h$ starting from $v$ with angle $\pi/2$ (resp.\ $-\pi/2$). For a countable dense set $(x_k)$ of $\mathbb C$, we declare that $x_j$ comes before $x_k$ if $\overline\eta^L(x_j)$ merges with $\overline\eta^L(x_k)$ on its left side. Then there is a unique space-filling curve that visits $(x_n)$ according to this order, which is defined to be the whole-plane space-filling SLE$_\kappa$. Another way to visualize this is that flow lines with the same angle eventually merge and form a space-filling tree, and the whole-plane space-filling SLE$_\kappa$ is a Peano curve that traces the boundary of the entire tree.

In this work, it will be important to consider flow lines emanating from \emph{every} point simultaneously. We will explain for the chordal case. Recall the setting in Section~\ref{subsec:setup-ig}. For each $u \in \overline{\mathbb B}$, let $\sigma_u$ be the first time that $\widetilde \eta$ hits $u$. For all $v \in \overline{\mathbb B} \cap \mathbb Q^2$, the $\pi/2$ and $-\pi/2$ angle flow lines $\eta^L(v)$ and $\eta^R(v)$ emanating from $v$ coincide with the left and right boundaries of $\widetilde \eta([\sigma_v,\infty))$, respectively. For general $v \in \overline{\mathbb B}$, we directly define $\eta^L(v)$ and $\eta^R(v)$ as the left and right boundaries of $\widetilde \eta([\sigma_v,\infty))$. Since $\widetilde \eta$ is a continuous space-filling curve, for each $n$ we can find $v_n \in B(v,1/n) \cap \mathbb Q^2$ such that $\sigma_{v_n}>\sigma_v$, and $\widetilde \eta([\sigma_v,\sigma_{v_n}]) \subset B(v,1/n)$. In this way, $\eta^L(v)$ and $\eta^R(v)$ agree with $\eta^L(v_n)$ and $\eta^R(v_n)$ respectively after the first time exiting $B(v,1/n)$. We also refer to $\eta^L(v)$ and $\eta^R(v)$ as the $\pi/2$ and $-\pi/2$ angle flow lines emanating from $v$. It is easy to verify that they are simple curves and satisfy the aforementioned flow line interaction rules. Moreover, the flow lines $\{\eta^{\mathsf q}(v):v \in B(z,r), \mathsf q \in \{L,R\}\}$ stopped upon exiting $B(z,r)$ are measurable with respect to $h|_{B(z,r)}$ modulo $2\pi\chi \mathbb Z$. In fact, this is clearly true for rational points. The assertion for general points follows from the fact that the segments of $\widetilde \eta$ in $B(z,r)$ and their orderings are determined by the flow lines emanating from rational points, which in turn are determined by the field. This implies Properties~\eqref{property:flow-line-all} and~\eqref{property:measurable}.

\subsection{Characterizing $\mathcal K$: Proof of Lemma~\ref{lem:flow-line-not-touch}}\label{subsec:flow-line-not-touch}

This subsection is devoted to the proof of Lemma~\ref{lem:flow-line-not-touch}, which relies on the following two properties:
\begin{enumerate}[(i)]
    \item \label{property:disk-mrt} Let $\kappa \in (4,8)$ and $\gamma=4/\sqrt{\kappa}$, and consider a weight $2-\frac{\gamma^2}{2}$ thin quantum disk with two boundary marked points~\cite{AHS20}, decorated with an independent chordal space-filling SLE$_\kappa$ $\widetilde \eta$ on top of it, parameterized by quantum area. Let $L_t$ (resp.\ $R_t$) be the quantum length of the left (resp.\ right) boundary of $\widetilde \eta([t,A])$, where $A$ is the random quantum area of the quantum disk. Then $(L_t,R_t)_{t \in [0,A]}$ evolves as a correlated two-dimensional Brownian excursion in the cone $\mathbb R_+^2$ (with starting point chosen from some random measure and ending at the origin), and the correlation coefficient is given by $-\cos(4\pi/\kappa)$.

    \item \label{property:ig-dense} Recall the setting in Section~\ref{subsec:setup-ig}. The following holds almost surely. Let $u \in \mathbb{B}$. The set of points $v \in \eta^L(u)$ such that $\eta^L(v)$ coincides with the part of $\eta^L(u)$ after hitting $v$ is dense on $\eta^L(u)$. The same holds for flow lines $\eta^R(\cdot)$.
\end{enumerate}

Property~\eqref{property:disk-mrt} is the finite-area variant of the mating-of-trees result from~\cite[Theorem 1.9]{DMS21-LQG-MRT}, proved in~\cite[Corollary 7.4]{AHS20}. We now prove Property~\eqref{property:ig-dense}.

\begin{proof}[Proof of Property~\eqref{property:ig-dense}]
    Recall the setting of Property~\eqref{property:disk-mrt}. A weight $2-\frac{\gamma^2}{2}$ thin quantum disk consists of a countable collection of thick quantum disks with two boundary marked points. We parameterize a chosen one by $(\mathbb{B}, -i, i)$ and adopt the notation of flow lines $(\eta^{\mathsf q}(u))$ to it for $u \in \overline{\mathbb{B}}$ and $\mathsf q \in \{L,R\}$. Assume that $v \in \eta^L(u)$ and $\eta^L(v)$ is not the part of $\eta^L(u)$ after hitting $v$. For $x \in \overline{\mathbb{B}}$, let $\sigma_x$ be the first time that $\widetilde \eta$ hits $x$. Since $\sigma_v < \sigma_u$, we know that $\eta^L(v)$ merges with $\eta^L(u)$ on its left side; see Figure~\ref{fig:flow-line} (left). Let $D_1$ be the bounded connected component of $\mathbb C \setminus (\eta^L(u) \cup \eta^L(v))$, and let $D_2$ be the union of bounded connected components of $\mathbb C \setminus (\eta^L(u) \cup \eta^R(u) \cup \eta^R(v))$. Since space-filling SLE$_\kappa$ for $\kappa \in (6,8)$ does not have quadruple points~\cite[Corollary 7.5]{GHM20-SLE-KPZ}, we know that $\widetilde \eta$ proceeds to fill $D_1$ and then $D_2$ after $\sigma_v$. Let $\sigma_v'$ be the time that $\widetilde \eta$ finishes filling $D_1$, then $R_t \ge R_{\sigma_v'}$ for $t \in [\sigma_v,\sigma_v']$ and $L_t \ge L_{\sigma_v'}$ for $t \in [\sigma_v',\sigma_u]$. Hence $\sigma_v'$ is a two-sided $\pi$-cone time of $(L,R)$. It follows from~\cite{Evans-bm-cone} (applied to a linear transformation of $(L,R)$ chosen so that the coordinates are independent) that the Hausdorff dimension of such times is zero. By local absolute continuity and the KPZ relation~\cite{GHM20-SLE-KPZ}, the set of these exceptional points $v$ has Hausdorff dimension zero and hence its complement is dense on $\eta^L(u)$. The case for $\eta^R(\cdot)$ follows from a similar argument.
\end{proof}

\begin{figure}[htbp]
    \centering
    \begin{minipage}{0.52\textwidth}
    \includegraphics[width=\textwidth]{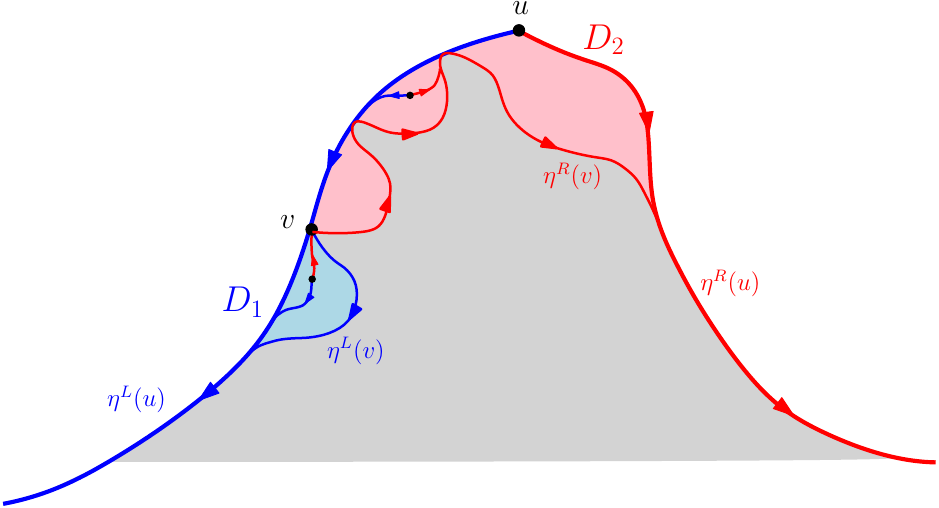}
    \end{minipage}
    \hfill
    \begin{minipage}{0.44\textwidth}
    \includegraphics[width=\textwidth]{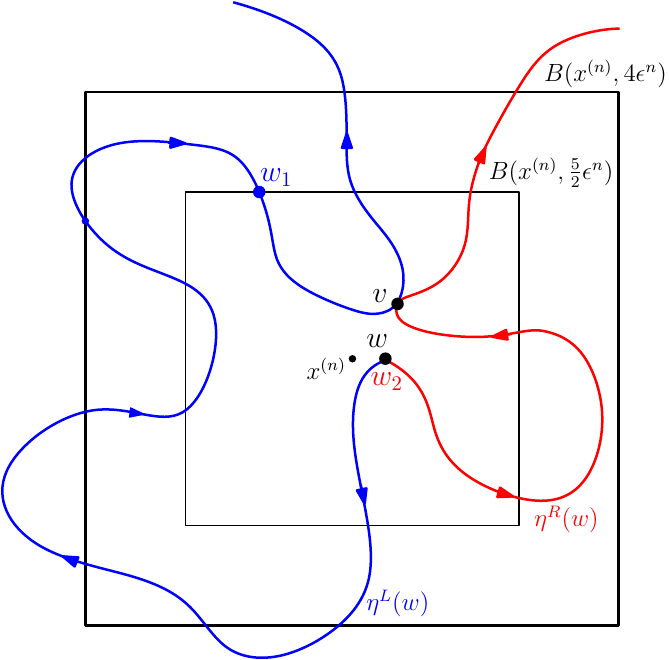}
    \end{minipage}
    \caption{\textbf{Left:} The exceptional point $v \in \eta^L(u)$ in Property~\eqref{property:ig-dense}. After hitting $v$ (and having filled the gray region), the curve $\widetilde \eta$ proceeds to fill $D_1$ (in light-blue) and then $D_2$ (in pink). \textbf{Right:} An illustration for the proof of Lemma~\ref{lem:flow-line-not-touch}. If $\eta^L(w)$ and $\eta^R(w)$ intersect at $v \neq w$, then we can construct $w_1,w_2 \in B(x^{(n)},3\epsilon^n)$ that violates the $n$-good property of $x^{(n)}$.}
    \label{fig:flow-line}
\end{figure}

In the following remark, we give a heuristic argument to derive the Hausdorff dimension of the set $\widehat{\mathcal{K}}$ defined below, again using Property~\eqref{property:disk-mrt} and the KPZ relation~\cite{GHM20-SLE-KPZ}.

\begin{remark}[Hausdorff dimension of $\mathcal{K}$]\label{rem:dimension-k}
    Since $\mathcal{K}$ has a positive probability of being empty, we consider the set $\widehat{\mathcal{K}}$ consisting of points $v \in \overline{\mathbb B}$ such that $\eta^L(v) \cap \eta^R(v)=\{v\}$. The Hausdorff dimension of $\widehat{\mathcal{K}}$ is then expected to be $4-16/\kappa$, which is close to 2 when $\kappa$ is close to 8. We present a heuristic argument based on Property~\eqref{property:disk-mrt}. Recall the setting in the proof of Property~\eqref{property:ig-dense}. Let $X$ be the pre-image under $\widetilde \eta$ of the point $v$ such that $\eta^L(v)$ and $\eta^R(v)$ do not intersect except at $v$. Then $X$ is the set of times that are not contained in any $\pi/2$-cone intervals for both $(L,R)$ and its time reversal.
    Let $X_1$ be the set of times that are not contained in any $\pi/2$-cone intervals for $(L,R)$, which is the pre-image under $\widetilde \eta$ of the point $v$ such that $\eta^L(v)$ does not hit $\eta^R(v)$ on its left side, then $\dim_{\mathcal H}(X_1)=\kappa/8$~\cite[Example 2.3]{GHM20-SLE-KPZ}. If $x$ is chosen proportionally to the Lebesgue measure in $[0,A]$, the probability that $X_1$ intersects $(x-\epsilon/2,x+\epsilon/2)$ decays as $\epsilon^{1-\kappa/8}$ as $\epsilon \to 0$.
    This suggests that the probability that $X$ intersects $(x-\epsilon/2,x+\epsilon/2)$ decays as $\epsilon^{2(1-\kappa/8)}$ as $\epsilon \to 0$, and hence $\dim_{\mathcal H}(X)=1-2(1-\kappa/8)=\kappa/4-1$. By local absolute continuity and the KPZ relation~\cite{GHM20-SLE-KPZ}, we have
    \[ \dim_{\mathcal H}(\widehat{\mathcal{K}})=(2+\tfrac{\gamma^2}{2})\dim_{\mathcal H}(X)-\tfrac{\gamma^2}{2}\dim_{\mathcal H}(X)^2=4-\tfrac{16}{\kappa}. \]
    Note that when the dimension of $\widehat{\mathcal{K}}$ is smaller than 1, i.e., $\kappa < 16/3$, $\mathcal{K}$ cannot contain a topological Sierpi\'nski carpet by~\cite[Proposition 3.5]{falconer-fractal-geometry}.
\end{remark}

Finally, we give the proof of Lemma~\ref{lem:flow-line-not-touch}.

\begin{proof}[Proof of Lemma~\ref{lem:flow-line-not-touch}]
    Fix $w \in [-1/2,1/2]^2$ so that the conditions hold. Assume for contradiction that $\eta^L(w)$ and $\eta^R(w)$ intersect at a point $v$ other than $w$; see Figure~\ref{fig:flow-line} (right). Since the origin is $0$-good, $\eta^L(w)$ and $\eta^R(w)$ do not intersect after exiting $B(w,\epsilon)$, hence $|w-v|_\infty \leq \epsilon$. Let $n$ be a positive integer such that $\epsilon^{n+1} < |w-v|_\infty \le \epsilon^n$, and let $x^{(n)}$ be the closest (under $|\cdot|_\infty$-distance) vertex to $w$ on $\epsilon^n \mathbb{Z}^2$. Then $|w-x^{(n)}|_\infty<\epsilon^n$, so $x^{(n)}$ is $n$-good by our assumption.

    We now construct a pair $(w_1,w_2)$ that violates the $n$-good property of $x^{(n)}$; see Figure~\ref{fig:flow-line} (right). Consider the flow line $\eta^L(w)$ stopped upon hitting $v$. If the flow line $\eta^L(w)$ hits $v$ before exiting $B(x^{(n)},4\epsilon^n)$, we simply let $w_1=w$. Otherwise, let $\sigma_1$ be the last time that $\eta^L(w)$ exits $B(x^{(n)},4\epsilon^n)$ before hitting $v$, and let $w_1$ be the point where $\eta^L(w)|_{[\sigma_1,\infty)}$ first hits $\overline{B(x^{(n)},\frac{5}{2}\epsilon^n)}$. Then $w_1 \in B(x^{(n)},3\epsilon^n)$, and
    \[ |w_1-v|_\infty \ge |w_1-x^{(n)}|_\infty-|x^{(n)}-w|_\infty-|w-v|_\infty \ge \tfrac{5}{2}\epsilon^n-\epsilon^n-\epsilon^n>\epsilon^{n+1}. \]
    By possibly switching to another point on $\eta^L(w)$ close to $w_1$ (see Property~\eqref{property:ig-dense}), we may assume that the flow line $\eta^L(w_1)$ coincides with the part of $\eta^L(w)$ after hitting $w_1$.
    Hence, in either case, the flow line $\eta^L(w_1)$ hits $v$ after exiting $B(w_1,\epsilon^{n+1})$ but before exiting $B(x^{(n)},4\epsilon^n)$. Similarly, we may define $w_2 \in B(x^{(n)},3\epsilon^n)$ such that $\eta^R(w_2)$ hits $v$ after exiting $B(w_2,\epsilon^{n+1})$ but before exiting $B(x^{(n)},4\epsilon^n)$. This pair $(w_1,w_2)$ then contradicts the $n$-good property of $x^{(n)}$, thereby concluding the proof.
\end{proof}

\subsection{Bounding $n$-good probability: Proof of Lemma~\ref{lem:kappa-to-8}}\label{subsec:n-good-probability}

In this subsection, we will prove Lemma~\ref{lem:kappa-to-8}. We start by recalling some terminology.

Let $\widetilde \eta$ be the chordal space-filling SLE$_\kappa$ in $\mathbb B$ from $-i$ to $i$, and when parameterized by capacity viewed from $i$, it is a chordal SLE$_\kappa$ process $\eta$ from $-i$ to $i$. By~\cite[Theorem 1.19]{IG4}, the time reversal of $\widetilde \eta$ is a chordal space-filling SLE$_\kappa(\frac{\kappa}{2}-4;\frac{\kappa}{2}-4)$ from $i$ to $-i$, and when parameterized by capacity viewed from $-i$, it is a chordal SLE$_\kappa(\frac{\kappa}{2}-4;\frac{\kappa}{2}-4)$ from $i$ to $-i$ with force points immediately to the left and right of $i$. Denote this curve by $\overline \eta$. From now on, we always assume that $\eta$, $\overline \eta$, and $\widetilde \eta$ are coupled in this way.

Recall that a bubble of $\eta$ is a connected component of $\mathbb B \setminus \eta([0,\infty])$, and the diameter of a set $A \subset \mathbb R^2$ is defined as $\mathrm{diam}(A)=\sup\{|x-y|_\infty:x,y\in A\}$. We first prove an intermediate result, stating that with high probability, chordal SLE$_\kappa$ does not have large bubbles in the bulk when $\kappa$ is close to 8.

\begin{lemma}\label{lem:bubble-diameter}
    For any $q>0$ and $\rho>0$, there exists $\delta_1=\delta_1(q,\rho)>0$ such that for all $\kappa \in (8-\delta_1,8)$, with probability at least $1-q$, all the bubbles of $\eta$ that intersect $(-3/4,3/4)^2$ have diameter less than $\rho$. The same holds for $\overline\eta$.
\end{lemma}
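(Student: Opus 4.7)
My plan is to reduce, by discretization and a union bound, to a uniform one-point estimate on the existence of separating loops of $\eta$ (and of $\overline\eta$), and then to obtain that estimate via convergence of chordal $\mathrm{SLE}_\kappa$ to the space-filling $\mathrm{SLE}_8$ as $\kappa\uparrow 8$.

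First I fix a $(\rho/16)$-net $\{z_i\}_{i=1}^N$ of $[-3/4,3/4]^2$ with $N=O(\rho^{-2})$, and for each $i$ introduce the event $F_i$ that $\eta$ contains a closed loop inside the annulus $A_i:=B(z_i,\rho/8)\setminus\overline{B(z_i,\rho/16)}$ separating $z_i$ from $\partial B(z_i,\rho/8)$. A short topological argument shows that on $\bigcap_i F_i$, every bubble $U$ of $\eta$ meeting $[-3/4,3/4]^2$ is contained in some $B(z_i,\rho/8)$ and hence has diameter at most $\rho/4<\rho$: for $p\in U\cap[-3/4,3/4]^2$, pick $z_i$ with $|p-z_i|_\infty\le\rho/16$; on $F_i$, $U$ cannot cross the separating loop in $A_i$, and since $p$ lies strictly inside that loop, so does all of $U$. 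By the union bound it then suffices to prove $\sup_i \mathbb{P}[F_i^c]=o(\rho^2)$ as $\kappa\uparrow 8$. By conformal invariance and the uniform bound $d(z_i,\partial\mathbb{B})\ge 1/4$, this reduces to a scale-invariant one-point estimate: for a fixed interior point $z_0$ and fixed scales $0<r<R$, the probability that chordal $\mathrm{SLE}_\kappa$ contains a separating loop around $z_0$ in $B(z_0,R)\setminus\overline{B(z_0,r)}$ must tend to $1$ as $\kappa\uparrow 8$. The analogous statement for $\overline\eta$ is handled identically, since its defining force weights $\kappa/2-4$ tend to $0$ as $\kappa\to 8$ and its law converges to that of $\mathrm{SLE}_8$.

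The target curve $\mathrm{SLE}_8$ is space-filling and a.s.\ produces infinitely many disjoint separating loops around every interior point at every scale. Coupling all $\mathrm{SLE}_\kappa$'s through the common Brownian motion driving their Loewner equations, the driving functions $\sqrt{\kappa}B_t$ converge uniformly on compact time intervals, and Loewner regularity transfers this to convergence of the curves themselves. The main obstacle is that the topological event ``$\eta$ contains a separating loop in a prescribed annulus'' is not continuous under uniform convergence of curves, since a near-loop need not close. To overcome this, I work with the space-filling variant $\widetilde\eta^\kappa$ from Section~\ref{subsec:setup-ig} parameterized by Lebesgue area on $\mathbb{B}$, for which a uniform-in-time convergence $\widetilde\eta^\kappa\to\eta^8$ is available, and I replace $F_i$ by a strengthened quantitative event on $\widetilde\eta^8$: two times $s<t$ with $|s-t|$ bounded below such that $\widetilde\eta^8(s)$ and $\widetilde\eta^8(t)$ are both within a small distance of a prescribed interior point of $A(z_i,r',R')$ for some $r<r'<R'<R$, with the trajectory $\widetilde\eta^8([s,t])$ staying in the larger annulus $A(z_i,r,R)$. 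This event has probability close to $1$ by the space-filling property, is open in the uniform topology, and is therefore inherited by $\widetilde\eta^\kappa$ for $\kappa$ close to $8$; the resulting macroscopic self-intersection of $\widetilde\eta^\kappa$ then propagates to $\eta^\kappa$ through the boundary-time construction in Section~\ref{subsec:setup-ig}, giving the desired separating loop of $\eta^\kappa$ in $A_i$.
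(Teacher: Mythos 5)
Your approach is structurally dual to the paper's. You aim to \emph{construct} separating loops of $\eta$ around each point of a net, which forces bubbles to be small; the paper argues by contraposition, showing that a large bubble forces the space-filling curve $\widetilde\eta_\kappa$ to have a macroscopic ``bottle-neck'' (two close times whose connecting paths through $\widetilde\eta_\kappa([0,t_2'])$ all have large diameter) and that such bottle-necks are ruled out in the $\mathrm{SLE}_8$ limit because its hulls are a.s.\ closed disks with connected complement. Both routes need the same hard input---convergence $\widetilde\eta_\kappa\to\widetilde\eta_8$ in the uniform curve topology away from $\partial\mathbb{B}$. The paper devotes Step~1 of its proof to establishing exactly this, via the imaginary geometry coupling and the precompactness/identification machinery of~\cite{AM-SLE8}; your proposal treats that convergence as already available, which bypasses the main technical content of the lemma.

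Beyond that, two specific steps do not hold as written. First, ``the driving functions $\sqrt{\kappa}B_t$ converge uniformly, and Loewner regularity transfers this to convergence of the curves'' is not a valid inference: sup-norm convergence of drivers gives Carath\'eodory convergence of hulls, but curve convergence requires an a-priori equicontinuity (tightness) estimate for the curves, and obtaining this uniformly as $\kappa\uparrow 8$ is exactly the delicate point that~\cite{AM-SLE8} resolves. Second, and more fundamentally, the ``propagation'' from a near-self-intersection of $\widetilde\eta^\kappa$ to a separating loop of $\eta^\kappa$ is not established. If $\widetilde\eta^\kappa(s)$ and $\widetilde\eta^\kappa(t)$ are close with $\widetilde\eta^\kappa([s,t])$ separating $z_i$ inside the annulus, the natural candidate loop is a component of $\partial\bigl(\widetilde\eta^\kappa([s,t])\bigr)$. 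But that boundary lies on the trace of $\eta^\kappa$ only when $s$ and $t$ are \emph{frontier times} of $\widetilde\eta^\kappa$: at generic times the boundary of $\widetilde\eta^\kappa([0,s])$ cuts through the interior of the bubble currently being filled and misses the trace of $\eta^\kappa$ entirely. Since $\mathrm{SLE}_8$ has no bubbles, every time is a frontier time for $\widetilde\eta^8$, so the quantitative event you impose on $\widetilde\eta^8$ carries no information about frontier times, and you have no mechanism guaranteeing that $\widetilde\eta^\kappa$ has suitable frontier times near $s$ and $t$. The paper's ``bottle-neck'' formulation is chosen precisely to avoid this instability: the big bubble of $\eta^\kappa$ already hands you two frontier times of $\widetilde\eta^\kappa$, and the event one then checks on the coupled $\mathrm{SLE}_8$ is open under the curve metric, so it passes to the limit without any need to locate frontier times there.
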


\begin{proof}
    For $\kappa \in (6,8]$, let $\widetilde \eta_k$ be the chordal space-filling SLE$_\kappa$ from $-i$ to $i$ in $\mathbb B$, where the subscript emphasizes the dependence on $\kappa$. The proof is divided into two steps. We first show that as $\kappa$ tends to 8, the curves $\widetilde \eta_\kappa$ converge in law to $\widetilde \eta_8$ under the curve topology when away from $\partial \mathbb B$; see Claim~\eqref{eq:kappa-to-8-convergence} below. Formally, we identify $\partial \mathbb B$ with a point so that $\mathbb B \cup \{\partial \mathbb B\}$ is homeomorphic to a sphere. Let $\widetilde d$ be the metric on $\overline{\mathbb{B}}$ induced by the round metric. Then, $\widetilde d$ is up-to-constant equivalent to $|\cdot|_\infty$-distance when restricted to any open subset $U \subset \mathbb B$ that has positive distance from $\partial \mathbb B$. For two curves $\gamma_1,\gamma_2$ parameterized by $[0,1]$, the distance between them is given by
    \[ \mathsf d(\gamma_1,\gamma_2)=\inf_\psi \sup_{t \in [0,1]} \widetilde d(\gamma_1(t),\gamma_2 \circ \psi(t)), \]
    where the infimum is taken over all increasing homeomorphisms $\psi$ from $[0,1]$ to itself. We claim that
    \begin{equation}\label{eq:kappa-to-8-convergence}
        \widetilde \eta_\kappa \to \widetilde \eta_8 \mbox{ in law with respect to } \mathsf d, \mbox{ as } \kappa \uparrow 8,
    \end{equation}
    which essentially follows from~\cite{AM-SLE8}. In the second step, combined with the fact that SLE$_8$ is a continuous space-filling curve, this will imply that neither $\eta$ nor $\overline \eta$ can form large bubbles in the bulk.
    
    \textbf{Step 1: Proof of Claim~\eqref{eq:kappa-to-8-convergence}}. We recommend that readers refer to Figure~\ref{fig:convergence} while reading the proof. For $\kappa \in (6,8)$, let $\eta'_\kappa$ be the whole-plane space-filling SLE$_\kappa$ normalized so that $\eta'_\kappa(0)=0$ and parameterized by Lebesgue measure, and let $\mathfrak h$ be a whole-plane GFF modulo $2\pi\chi \mathbb Z$. In what follows, a \emph{pocket} formed by curves (typically flow lines) refers to the bounded complementary connected component of their union. Fix $w \in [100,\infty)$ on the real line, let $\gamma^L(0)$ and $\gamma^R(0)$ (resp.\ $\gamma^L(w)$ and $\gamma^R(w)$) be deterministic simple curves starting from 0 (resp.\ $w$), such that $\gamma^L(0)$ and $\gamma^R(0)$ intersect and merge with $\gamma^L(w)$ and $\gamma^R(w)$ at $w+i$ and $w-i$, respectively, and the pocket formed by these four curves is contained in $B(0,w+10)$ and contains $B(w/2,w/4)$. Consider the $\pm \pi/2$ angle flow lines $\eta_\kappa^L(\cdot)$ and $\eta_\kappa^R(\cdot)$ of $\mathfrak h$, let $E_\kappa(w)$ be the event that
   \begin{align*}
       &\mbox{the flow line } \eta_\kappa^L(0) \mbox{ merges with } \eta_\kappa^L(w) \mbox{ inside } B(w+i,1/100), \mbox{the flow line } \eta_\kappa^R(0) \mbox{ merges with } \eta_\kappa^R(w) \\
       &\mbox{inside } B(w-i,1/100), \mbox{and each flow line stays within a distance } 1/100 \mbox{ from the deterministic curve.}
   \end{align*}
   By Lemmas 3.2 and 3.4 of~\cite{AM-SLE8}, there exists $p \in (0,1)$ depending only on the deterministic curves (and not on $\kappa$) such that $\mathbb P[E_\kappa(w)] \ge p$ for all $\kappa \in (6,8)$.
    
    Conditioned on the event $E_\kappa(w)$, let $U_\kappa$ be the connected component of $\mathbb C \setminus (\eta_\kappa^L(0) \cup \eta_\kappa^R(0) \cup \eta_\kappa^L(w) \cup \eta_\kappa^R(w))$ that contains $w/2$, and let $\tau_\kappa^1=\inf\{t \in \mathbb R: \eta'_\kappa(t) \in U_\kappa\}$ and $\tau_\kappa^2=\inf\{t>\tau_\kappa^1: \eta'_\kappa(t) \notin \overline U_\kappa\}$ be the first entering and exiting time of $\eta'_\kappa$ from $U_\kappa$. Let $x_\kappa^L \in B(w+i,1/100) $ and $x_\kappa^R \in B(w-i,1/100)$ be the point on $\partial U_\kappa$ where the flow lines merge. Let $f_\kappa:U_\kappa \to \mathbb B$ be the unique conformal map such that $f_\kappa(\eta'_\kappa(\tau_\kappa^1))=-i$, $f_\kappa(\eta'_\kappa(\tau_\kappa^2))=i$, and $f_\kappa(w/2) \in \mathbb R$. Then $f_\kappa(\eta'_\kappa|_{[\tau_\kappa^1,\tau_\kappa^2]})$ is a space-filling SLE$_\kappa(\frac{\kappa}{2}-4;\frac{\kappa}{2}-4)$ from $-i$ to $i$ in $\mathbb B$, with force points located at $f_\kappa(x_\kappa^L)$ and $f_\kappa(x_\kappa^R)$~\cite{IG4}. Moreover, by construction, there exists a constant $c(w)>0$ with $\lim_{w \to \infty} c(w)=0$, such that $f_\kappa(x_\kappa^L),f_\kappa(x_\kappa^R) \in \partial \mathbb B \cap B(i,c(w))$.

    \begin{figure}[htbp]
        \centering
        \includegraphics[width=0.92\textwidth]{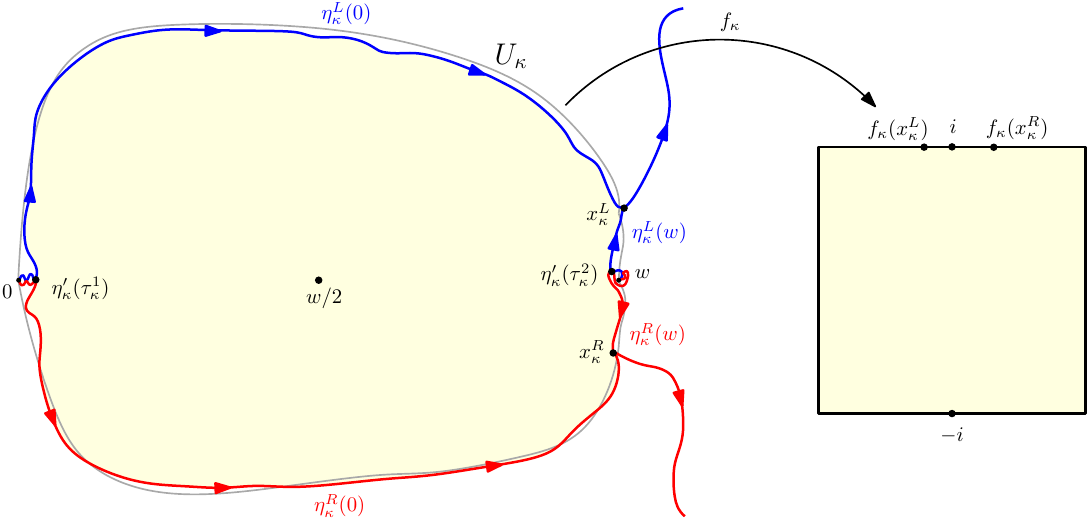}
        \caption{\textbf{Left.} On the event $E_\kappa(w)$, the $\pm \pi/2$ angle flow lines emanating from 0 and $w$ stay within small neighborhoods of the corresponding deterministic curves (in gray) and merge at $x_\kappa^L$ and $x_\kappa^R$ near the point $w+i$ and $w-i$, respectively. The whole-plane space-filling SLE$_\kappa$ restricted to $[\tau_\kappa^1,\tau_\kappa^2]$ is a space-filling SLE$_\kappa(\frac{\kappa}{2}-4;\frac{\kappa}{2}-4)$ process in the light-yellow domain $U_\kappa$, with force points at $x_\kappa^L$ and $x_\kappa^R$. \textbf{Right.} Pushing forward via the conformal map $f_\kappa$ yields a space-filling SLE$_\kappa(\frac{\kappa}{2}-4;\frac{\kappa}{2}-4)$ process in $\mathbb B$.}
        \label{fig:convergence}
    \end{figure}

    When parameterized by capacity viewed from $i$, the curve $f_\kappa(\eta'_\kappa|_{[\tau_\kappa^1,\tau_\kappa^2]})$ is a chordal SLE$_\kappa(\frac{\kappa}{2}-4;\frac{\kappa}{2}-4)$ from $-i$ to $i$ in $\mathbb B$. Let $h_\kappa^1$ (resp.\ $h_\kappa^2$) be the imaginary geometry GFF on $\mathbb B$ such that the counterflow line from $-i$ to $i$ corresponds to $f_\kappa(\eta'_\kappa|_{[\tau_\kappa^1,\tau_\kappa^2]})$ (resp.\ chordal SLE$_\kappa$). Note that the boundary conditions of $h_\kappa^1$ and $h_\kappa^2$ agree on $\partial \mathbb B \setminus B(i,c(w))$ and their difference on $\partial \mathbb B \cap B(i,c(w))$ tends to 0 as $\kappa$ tends to 8. Hence, for fixed $w$ there exists a coupling of $h_\kappa^1$ and $h_\kappa^2$ such that the Radon-Nikodym derivative of $h_\kappa^1|_{\mathbb B \setminus B(i,2c(w))}$ with respect to $h_\kappa^2|_{\mathbb B \setminus B(i,2c(w))}$ a.s. tends to 1 as $\kappa$ tends to 8.

    We now consider the family of curves $(\eta'_\kappa)$ conditioned on the event $E_\kappa(w)$. For any sequence $(\kappa_n)_{n \ge 1}$ in $(6,8)$ with $\kappa_n \uparrow 8$, by~\cite[Proposition 5.1]{AM-SLE8} and Arzel\`a-Ascoli theorem, there exists a weak subsequential limit of the laws of $(\eta'_{\kappa_n})$ on the space of curves on $\mathbb{C}$ endowed with local uniform distance. Passing to a further subsequence, we may assume that $(\eta'_{\kappa_n},f_{\kappa_n}^{-1},\tau_{\kappa_n}^1,\tau_{\kappa_n}^2)$ jointly converges in law to $(\eta',f^{-1},\tau^1,\tau^2)$, where $f_\kappa^{-1}$ is viewed as random variable taking values in the space of conformal maps defined on $\mathbb B$ equipped with the Carath\'eodory topology (this is the reason we use the distance $\mathsf d$ in~\eqref{eq:kappa-to-8-convergence}). By Skorokhod's representation theorem, we may assume that there exists a coupling under which the convergence holds almost surely along this subsequence. Since $f_\kappa$ is conformal and hence continuous in $\mathbb B$, we see that $f_{\kappa_n}(\eta'_{\kappa_n}|_{[\tau_{\kappa_n}^1,\tau_{\kappa_n}^2]})$ converges to $f(\eta')$ with respect to the distance $\mathsf d$.
    
    Recall that the law of chordal SLE$_\kappa$ weighted by the Radon-Nikodym derivative of $h_\kappa^1$ with respect to $h_\kappa^2$ yields the law of $f_\kappa(\eta'_\kappa|_{[\tau_\kappa^1,\tau_\kappa^2]})$ reparameterized by capacity. By~\cite[Proposition 4.43]{Lawler-conformal}, the law of chordal SLE$_\kappa$ before exiting $\mathbb B \setminus B(i,2c(w))$ converges in the Carath\'eodory sense to that of chordal SLE$_8$ as $\kappa$ tends to 8. This implies that the limiting curve $f(\eta')$ must be chordal SLE$_8$ before exiting $\mathbb B \setminus B(i,2c(w))$. Therefore, we conclude that the law of $f_\kappa(\eta'_\kappa|_{[\tau_\kappa^1,\tau_\kappa^2]})$ before exiting $\mathbb B \setminus B(i,2c(w))$ converges to that of SLE$_8$ with respect to $\mathsf d$ as $\kappa \uparrow 8$. By reweighting by the Radon-Nikodym derivative of $h_\kappa^2$ with respect to $h_\kappa^1$, we see that the chordal space-filling SLE$_\kappa$ also converges in law to SLE$_8$ as $\kappa$ tends to 8 with respect to $\mathsf d$ before exiting $\mathbb B \setminus B(i,2c(w))$. The conclusion follows since $w$ is arbitrary.
    
    \textbf{Step 2: Proof of the lemma using Claim~\eqref{eq:kappa-to-8-convergence}}. Let $\gamma$ be a chordal SLE$_8$ from $-i$ to $i$ in $\mathbb B$, parameterized by $[0,1]$ so that $\gamma(0)=-i$ and $\gamma(1)=i$.
    We denote by $\widetilde{\mathrm{diam}}$ the diameter associated with $\widetilde d$, i.e., $\widetilde{\mathrm{diam}}(A)=\sup\{ \widetilde d(x,y): x,y \in A\}$ for $A \subseteq \overline{\mathbb B}$. Since $\widetilde d$, when restricted to any open set $U \subset \mathbb B$ that has a positive distance from $\partial \mathbb B$, is up-to-constant equivalent to $d$, there exists a constant $c>0$ such that $\widetilde{\mathrm{diam}}(A) \ge c \cdot \mathrm{diam}(A) $ for any $A \subseteq \overline{\mathbb B}$ satisfying $A \cap (-3/4,3/4)^2 \neq \emptyset$.

    Fix $\rho \in (0,1/4)$. For $\lambda>0$, let $F(\lambda;\rho)$ be the event that there exist $t_1,t_2 \in [0,1]$ with $t_1<t_2$ such that
    \begin{equation}\label{eq:bottle-neck-1}
        \widetilde d(\gamma(t_1),\gamma(t_2)) \le 2\lambda \quad \mbox{and} \quad \inf_{\mathcal P:\gamma(t_1) \to \gamma(t_2)} \widetilde{\mathrm{diam}}(\mathcal P) \ge \rho/2,
    \end{equation}
    where the infimum is taken over all continuous (under the Euclidean metric) paths $\mathcal P$ from $\gamma(t_1)$ to $\gamma(t_2)$ that lie entirely in the interior of $\gamma([0,t_2])$ except possibly at the endpoints.
    Then we have
    \begin{equation}\label{eq:bottle-neck-2}
        \lim_{\lambda \to 0} \mathbb P[F(\lambda;\rho)]=0.
    \end{equation}
    In fact, suppose that~\eqref{eq:bottle-neck-2} does not hold. By monotonicity of $F(\lambda;\rho)$ in $\lambda$, the event $\cap_{n \ge 1} F(1/n;\rho)$ occurs with positive probability. On this event, for any $n \ge 1$ there exist $t_1^{(n)},t_2^{(n)} \in [0,1]$ such that~\eqref{eq:bottle-neck-1} holds for $(t_1^{(n)},t_2^{(n)})$ and $\lambda=1/n$. Let $(t_1,t_2)$ be a subsequential limit of $(t_1^{(n)},t_2^{(n)})_{n \ge 1}$, then since $\gamma$ is a continuous curve, we see that either $\gamma(t_1)=\gamma(t_2)$ or $\gamma(t_1),\gamma(t_2) \in \partial \mathbb B$, but $\inf_{\mathcal P:\gamma(t_1) \to \gamma(t_2)} \widetilde{\mathrm{diam}}(\mathcal P) \ge \rho/2$. The former case contradicts the fact that almost surely for all $t$, the set $\gamma([0,t])$ is homeomorphic to a closed disk, and the latter case contradicts the fact that almost surely for all $t$, the set $\mathbb B \setminus \gamma([0,t])$ is connected. This concludes~\eqref{eq:bottle-neck-2}.

    \begin{figure}[htbp]
        \centering
        \includegraphics[width=0.96\textwidth]{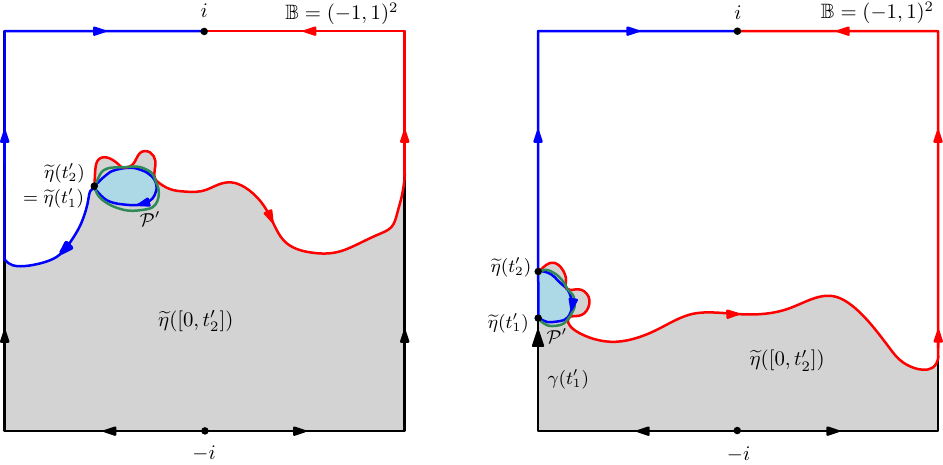}
        \caption{Two possible cases for how a bubble $b$ (in light-blue) of $\eta$ is formed by $\eta([t_1,t_2])$. The blue (resp.\ red) curve is the $\pi/2$ (resp.\ $-\pi/2$) angle flow line from $\widetilde \eta((t_2')^{-})$, while the light-gray regions are $\widetilde \eta([0,t_2'])$. The continuous path $\mathcal P'$ (in green) from $\widetilde \eta(t_1')$ to $\widetilde \eta(t_2')$ within $\widetilde \eta([0,t_2'])$ must enclose the bubble $b$, and thus necessarily has large diameter.
        \textbf{Left.} $b$ is in the interior so that $\eta(t_1)=\eta(t_2)$. \textbf{Right.} $b$ lies along the boundary so that $\eta(t_1),\eta(t_2) \in \partial \mathbb B$.}
        \label{fig:bubble}
    \end{figure}
    
    We choose $\lambda_0 \in (0,c\rho/4)$ so that $\mathbb P[F(\lambda_0;c\rho)] \leq q$. By Claim~\eqref{eq:kappa-to-8-convergence} and Skorokhod's representation theorem, we may assume that there is a coupling and a parameterization of curves such that $\sup_{t \in [0,1]} \widetilde d(\widetilde \eta_\kappa(t),\gamma(t)) \to 0$ almost surely as $\kappa$ tends to 8. Since both $\widetilde \eta_\kappa$ and $\gamma$ are continuous curves, we can choose $\delta_1$ so that for $\kappa \in (8-\delta_1,8)$ and we have $\widetilde d(\widetilde \eta_\kappa(t),\gamma(t)) \leq \lambda_0$ almost surely for all $t \in [0,1]$.

    Fix $\kappa \in (8-\delta_1,8)$ and omit the subscript $\kappa$ hereafter. Suppose that there exists a bubble $b$ of $\eta$ intersecting $(-3/4,3/4)^2$ and with $d$-diameter $\geq \rho$, let $t_1$ (resp.\ $t_2$) be the time at which $\eta$ starts (resp.\ finishes) tracing $\partial b$, then $b$ is disconnected from $i$ by $\eta([t_1,t_2])$ and we have $\widetilde d(\eta(t_1),\eta(t_2))=0$ (i.e., either $\eta(t_1)=\eta(t_2)$ or $\eta(t_1),\eta(t_2) \in \partial \mathbb B$, depending on whether $\partial b$ shares a non-trivial segment with $\partial \mathbb B$); see Figure~\ref{fig:bubble}. Let $t_1',t_2'$ be the corresponding times for $\widetilde \eta$ such that reparameterizing $\widetilde \eta([t_1',t_2'])$ yields $\eta([t_1,t_2])$. Specifically, $t_1'$ is the first time $\widetilde \eta$ hits $\eta(t_1)$ and $t_2'$ is the first time after $t_1'$ that $\widetilde \eta$ hits $\eta(t_2)$. Then, we have $\widetilde d(\widetilde \eta (t_1'),\widetilde \eta(t_2'))=0$. Moreover, any continuous path $\mathcal{P}'$ from $\widetilde \eta (t_1')$ to $\widetilde \eta (t_2')$ within $\widetilde \eta ([0, t_2'])$, together with $\partial \mathbb B$, must enclose the bubble $b$, which implies that $\mathcal{P}'$ intersects $(-3/4,3/4)^2$ and ${\rm diam}(\mathcal{P}') \geq \rho$ (and hence $\widetilde{{\rm diam}}(\mathcal{P}') \geq c \rho$). Using the fact that $\widetilde d(\widetilde \eta_\kappa(t),\gamma(t)) \leq \lambda_0$ for all $t \in [0,1]$, we see that $\widetilde d(\gamma(t_1'),\gamma(t_2')) \le 2\lambda_0$, and $\inf_{\mathcal P:\gamma(t_1') \to \gamma(t_2')} \widetilde{\mathrm{diam}}(\mathcal P) \ge c\rho - 2 \lambda_0 \geq c \rho/2$, where $\mathcal P$ is defined as in~\eqref{eq:bottle-neck-1}. By the choice of $\lambda_0$, the event $F(\lambda_0;c\rho)$ (and therefore the existence of such a bubble) occurs with probability at most $q$. This concludes the case for $\eta$.

    By Claim~\eqref{eq:kappa-to-8-convergence} and~\cite[Theorem 1.19]{IG4}, the time reversal of $\widetilde \eta$, which is a chordal space-filling SLE$_\kappa(\frac{\kappa}{2}-4;\frac{\kappa}{2}-4)$ from $i$ to $-i$ on $\mathbb B$, also converges in law to chordal SLE$_8$ under curve topology with respect to $\mathsf d$. The case for $\overline\eta$ then follows from the same argument as in Step 2.
\end{proof}

\begin{figure}[htbp]
    \centering
    \includegraphics[width=0.45\textwidth]{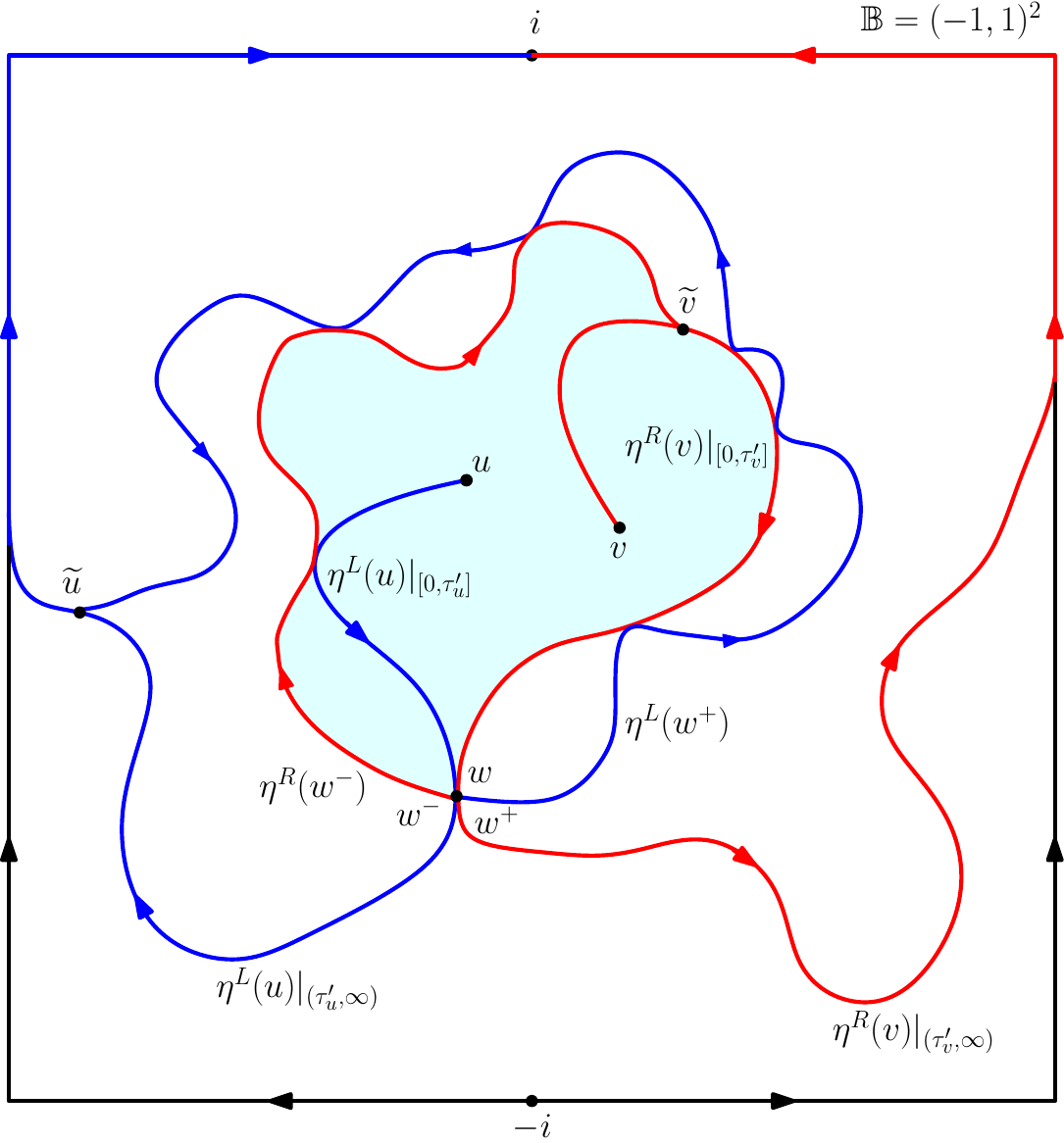}
    \caption{An illustration for the proof of Lemma~\ref{lem:kappa-to-8} for the case $n=0$. If $\eta^L(u)$ and $\eta^R(v)$, after leaving $B(u,\epsilon)$ and $B(v,\epsilon)$ respectively, intersect at some point $w$, then either $\eta^L(w^+)$ merges with $\eta^L(u)$ before $w$, or $\eta^R(w^-)$ merges with $\eta^R(v)$ before $w$. In either case, the pocket (in light-cyan) formed by two flow lines is contained in a bubble of $\eta$ or $\overline\eta$ with diameter at least $\epsilon$.}
    \label{fig:n-good}
\end{figure}

We now provide the proof of Lemma~\ref{lem:kappa-to-8} based on Lemma~\ref{lem:bubble-diameter}. Recall from Definition~\ref{def:n-good} the notion of $n$-good for $x \in V_n=\epsilon^n \mathbb Z^2 \cap (-3/4,3/4)^2$.

\begin{proof}[Proof of Lemma~\ref{lem:kappa-to-8}]
    Fix $q>0$ and $\epsilon<100^{-1}$. Let $\delta_1=\delta_1(q/2,\epsilon)$ be the constant in Lemma~\ref{lem:bubble-diameter}. We first show that for $\kappa \in (8-\delta_1,8)$, with probability at least $1-q$, the origin is $0$-good. By Lemma~\ref{lem:bubble-diameter}, it suffices to show that on the event that the origin is $0$-bad, then
    \begin{equation}\label{eq:find-bubble}
        \mbox{there exists a bubble of } \eta \mbox{ or } \overline \eta \mbox{ that intersects } (-3/4,3/4)^2 \mbox{ with diameter at least } \epsilon.
    \end{equation}
    Pick $u,v \in (-3/4,3/4)^2$ such that $\eta^L(u)|_{[\tau_u,\infty)}$ and $\eta^R(v)|_{[\tau_v,\infty)}$ intersect at some point $w$. We first assume that $\eta^L(u)|_{[\tau_u,\infty)}$ hits $\eta^R(v)|_{[\tau_v,\infty)}$ on its right side at $w$ at the stopping time $\tau_u'$; see Figure~\ref{fig:n-good}. In this case, we will show that there exists a bubble of $\overline\eta$ with diameter at least $\epsilon$. Let $w^+$ be the point immediately to the left of $\eta^R(v)$ at $w$; more precisely, $w^+$ is a prime end of $\mathbb B \setminus \eta^R(v)$.
    Similarly, let $\tau_v'$ be the stopping time that $\eta^R(v)|_{[\tau_v,\infty)}$ hits $\eta^L(u)|_{[\tau_u,\infty)}$ on its left side at $w$, and let $w^-$ be the point immediately to the right of $\eta^L(u)$ at $w$.
    We now consider two flow lines $\eta^L(w^+)$ and $\eta^R(w^-)$; see Figure~\ref{fig:n-good}. Let $\widetilde u$ be the point that $\eta^L(w^+)$ merges with $\eta^L(u)$, and let $\widetilde v$ be the point that $\eta^R(w^-)$ merges with $\eta^R(v)$. We claim that either $\widetilde u \in \eta^L(u)|_{[0,\tau_u']}$ or $\widetilde v \in \eta^R(v)|_{[0,\tau_v']}$. In fact, if $\widetilde u \in \eta^L(u)|_{(\tau_u',\infty)}$, then $w^-$ is a prime end of the pocket formed by $\eta^L(u)$ and $\eta^L(w^+)$. By flow line interaction rule, $\eta^R(w^-)$ cannot cross either $\eta^L(u)$ or $\eta^L(w^+)$, so it can only exit this pocket through $w$, i.e., the point $\widetilde v$ that it hits $\eta^R(v)$ lies on $\eta^R(v)|_{[0,\tau_v']}$. This proves the previous claim.

    Now, if $\widetilde v \in \eta^R(v)|_{[0,\tau_v']}$ (see Figure~\ref{fig:n-good}), then the pocket formed by $\eta^R(v)$ and $\eta^R(w^-)$ is contained in a bubble of $\overline \eta$, whose diameter is at least $\mathrm{diam}(\eta^L(u)|_{[0,\tau_u']}) \ge \mathrm{diam}(\eta^L(u)|_{[0,\tau_u]}) \ge \epsilon$. This is because when the time reversal of $\widetilde \eta$ hits $w^-$ for the first time, this pocket is enclosed and disconnected from $-i$ by $\eta^R(w^-)$. Similarly, if $\widetilde u \in \eta^L(u)|_{[0,\tau_u']}$, then the pocket formed by $\eta^L(u)$ and $\eta^L(w^+)$ is contained in a bubble of $\overline\eta$ with diameter at least $\epsilon$. In either case, there exists a bubble of $\overline\eta$ that intersects $(-3/4,3/4)^2$ with diameter at least $\epsilon$. Analogously, for the case when $\eta^L(u)|_{[\tau_u,\infty)}$ hits $\eta^R(v)|_{[\tau_v,\infty)}$ on its left side, there exists a bubble of $\eta$ that intersects $(-3/4,3/4)^2$ with diameter at least $\epsilon$. This proves~\eqref{eq:find-bubble} and the conclusion follows.

    We now use local absolute continuity (Lemma~\ref{lem:transfer}) to extend this result to all $n \ge 1$. Let $\mathbb{P}$ (resp.\ $\widetilde{\mathbb{P}}$) denote the law of the imaginary geometry GFF on $\mathbb B$ (resp.\ whole-plane GFF modulo $2\pi\chi \mathbb Z$). Let $E_0$ be the event that the following is true: For any $u,v \in (-3/8,3/8)^2$, the flow lines $\eta^L(u)$ and $\eta^R(v)$ do not intersect after first exiting $B(u,\epsilon/8)$ and $B(v,\epsilon/8)$, respectively, and before exiting $(-1/2,1/2)^2$. The same argument as for case $n=0$ shows that given any $p>0$, there exists $\delta'$ depending only on $p$ and $\epsilon$ such that $\mathbb P[E_0^c] \le p$ for all $\kappa \in (8-\delta',8)$.
    Applying Lemma~\ref{lem:transfer} with $B(z,r) = (-1/2,1/2)^2$, we find that given any $p>0$, there exists $\delta''$ depending only on $p$ and $\epsilon$ such that $\widetilde{\mathbb P}[E_0^c] \le p$ for all $\kappa \in (8-\delta'',8)$. Recall from Observation~\eqref{claim:remark-mn-3} in Remark~\ref{rmk:(m,n)-good} that the event that $x \in \epsilon^n \mathbb Z^2 \cap (-3/4,3/4)^2$ is $n$-good is measurable with respect to $h|_{B(x,4\epsilon^n)}$ modulo $2\pi\chi \mathbb Z$. Note that the event $E_0$ is defined in the same way as $\{ x \mbox{ is } n \mbox{-good} \}$ under appropriate scaling. Using the scaling invariance of the whole-plane GFF, we have $\widetilde{\mathbb P}[E_0^c] = \widetilde{\mathbb P}[x \mbox{ is } n \mbox{-bad}]$. Applying Lemma~\ref{lem:transfer} again with $B(z,r) = B(x, 4\epsilon^n)$, we find that there exists $\delta'''$ depending only on $q$ and $\epsilon$ such that $\mathbb P[x \mbox{ is } n \mbox{-bad}] \le q$ for all $\kappa \in (8-\delta,8)$, $n \geq 1$, and $x \in V_n$. Taking $\delta = \min \{ \delta_1, \delta''' \}$ yields the lemma.
\end{proof}

\section{Percolation on the Gaussian free field: Proof of Lemma~\ref{lem:induction-bound}}\label{sec:GFF}

In this section, we first derive Lemma~\ref{lem:induction-bound} from local absolute continuity of the Gaussian free field (GFF) and the following Proposition~\ref{prop:whole-GFF} about the whole-plane GFF. In Sections~\ref{subsec:outline-prop5.1}--\ref{subsec:count}, we prove Proposition~\ref{prop:whole-GFF}. For an integer $R \geq 1$, let $\mathscr{L}_R$ denote the lattice $[-R, R]^2 \cap \mathbb{Z}^2$.

\begin{proposition}\label{prop:whole-GFF}
    For any $T,\varrho\geq 1$, there exists a constant $c = c(T,\varrho)>0$ such that for all $s \in (0,T)$ and $\iota>0$, the following holds for all sufficiently large $R$. Let $\mathfrak h$ be a whole-plane Gaussian free field normalized so that $\mathfrak h_{2R}(0) = 0$, where $\mathfrak h_{2R}$ is the average of $\mathfrak h$ over $\partial B(0, 2R)$. For each $y \in \mathscr L_R$, let $\mathcal{E}_y$ be an event that is measurable with respect to $\mathfrak h|_{B(y, \varrho)}$ modulo $s \mathbb{Z}$ and satisfies
    \begin{equation}\label{eq:prop5.1-1}
    \mathbb{P}[\mathcal{E}_y] \geq 1 - c.
    \end{equation}
    Then, with probability at least $1-\iota$, 
    \begin{equation}\label{eq:prop5.1-2}
    \mbox{all connected components of } \cup_w \overline{B(w, 1)} \mbox{ have } |\cdot|_\infty \mbox{-diameter at most } \iota R,
    \end{equation}
    where the union is over those $w \in \mathscr{L}_R$ for which $\mathcal{E}_w$ does not occur, and two boxes that share exactly one point are also viewed as connected.
\end{proposition}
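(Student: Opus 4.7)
The plan is a Peierls-type argument that uses the domain Markov property (Lemma~\ref{lem:whole-plane-markov}) together with the modulo-$s\mathbb{Z}$ absolute continuity estimate (Lemma~\ref{lem:whole-plane-absolute}) to decouple the bad events $\mathcal{E}_w^c$ at well-separated sites. First, if~\eqref{eq:prop5.1-2} fails then there exists a connected subset $S \subset \mathscr{L}_R$ (in the adjacency $w \sim w'$ iff $|w-w'|_\infty \leq 2$) of size $|S| \geq \iota R/2$ on which $\mathcal{E}_w$ fails at every $w \in S$. By the standard lattice-animal bound, the number of such connected subsets of size $n$ through a given vertex is at most $C_0^n$ for a universal constant $C_0$, so it suffices to bound $\mathbb{P}[\bigcap_{w \in S} \mathcal{E}_w^c]$ and sum by union bound over the $O(R^2)$ anchor vertices and $n \geq \iota R/2$. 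From any such connected $S$ of size $n$ I greedily extract $S' = \{w_1, \ldots, w_k\}$ with $k \geq n/K$ at pairwise $|\cdot|_\infty$-distance at least $L := 4\varrho$, where $K = (2L+1)^2$ depends only on $\varrho$; this makes the balls $B(w_i, 2\varrho)$ pairwise disjoint.

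Iterating Lemma~\ref{lem:whole-plane-markov} across these disjoint balls, I decompose $\mathfrak{h}|_{B(w_i, 2\varrho)} = \mathring{\mathfrak{h}}^{w_i, 2\varrho} + \mathfrak{h}^{w_i, 2\varrho}$, where the zero-boundary parts $\mathring{\mathfrak{h}}^{w_i, 2\varrho}$ are mutually independent and jointly independent of $\mathcal{F} := \sigma(\mathfrak{h}|_{\mathbb{C} \setminus \bigcup_i B(w_i, 2\varrho)})$, and $g_i := \mathfrak{h}^{w_i, 2\varrho}$ is $\mathcal{F}$-measurable. Since each $\mathcal{E}_{w_i}^c$ depends only on $\mathfrak{h}|_{B(w_i, \varrho)}$ modulo $s\mathbb{Z}$, the events are conditionally independent given $\mathcal{F}$ and
\[
\mathbb{P}\Big[\bigcap_i \mathcal{E}_{w_i}^c\Big] = \mathbb{E}\Big[\prod_i X_i\Big], \qquad X_i := \mathbb{P}[\mathcal{E}_{w_i}^c \mid \mathcal{F}].
\]
Set $G_i := \{\sup_{u,v \in B(w_i, 3\varrho/2)} |g_i(u) - g_i(v)| \leq M\}$; by Lemma~\ref{lem:gff-estimate-5.11}, $\mathbb{P}[G_i^c] \leq C_1 e^{-M^2/C_1}$. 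Writing $q_i := \mathbb{P}_{\mathring{\mathfrak{h}}^{w_i, 2\varrho}}[\mathcal{E}_{w_i}^c]$ for the corresponding zero-boundary probability, two Cauchy-Schwarz applications using the moment bounds of Lemma~\ref{lem:whole-plane-absolute} on $G_i$ yield
\[
X_i \leq C_2(M)\, q_i^{1/2} \quad \text{and} \quad q_i^2 \leq C_2(M)\, X_i.
\]
Averaging the second inequality against $\mathbb{E}[X_i] = \mathbb{P}[\mathcal{E}_{w_i}^c] \leq c$ and choosing $M$ so that $\mathbb{P}[G_i] \geq 1/2$ gives $q_i^2 \leq 2C_2(M) c$, hence $X_i \leq C_3(M)\, c^{1/4}$ on $G_i$.

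Finally, introduce the global good event $G^\star := \bigcap_{w \in \mathscr{L}_R} G_w$. By union bound, $\mathbb{P}[(G^\star)^c] \leq C_1 R^2 e^{-M^2/C_1}$, which is at most $\iota/2$ for $M^2 \gtrsim \log(R^2/\iota)$. On $G^\star$ the pointwise bound $\prod_i X_i \leq C_3(M)^k c^{k/4}$ holds for every admissible $S'$, so combining with the Peierls count gives
\[
\mathbb{P}\big[\eqref{eq:prop5.1-2} \text{ fails}\big] \leq \tfrac{\iota}{2} + C R^2 \sum_{n \geq \iota R/2} C_0^n\, C_3(M)^{n/K}\, c^{n/(4K)},
\]
which is at most $\iota$ once the geometric-series base $C_0 \cdot C_3(M)^{1/K} \cdot c^{1/(4K)}$ is less than $1$ and $R$ is large. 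The hardest part is balancing the constants: the Lemma~\ref{lem:whole-plane-absolute} moments grow (at most exponentially in $M^2$) while $M \gtrsim \sqrt{\log R}$ is forced by the union bound defining $G^\star$, yet $c$ is required to depend only on $(T, \varrho)$. I expect the resolution to exploit the feature that integer shifts of $g_i$ by $s$ leave the event $\mathcal{E}_{w_i}^c$ invariant, so one can always replace $g_i$ by $g_i - n_i s$ before applying Lemma~\ref{lem:whole-plane-absolute}; this reduces the effective fluctuation bound entering Lemma~\ref{lem:whole-plane-absolute} to a quantity depending only on $T$ and $\varrho$, and thereby makes the Peierls geometric series converge with a single $c = c(T, \varrho)$ uniformly in $R$.
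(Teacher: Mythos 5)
Your outline shares the paper's starting point (Markov decomposition into zero-boundary pieces on disjoint boxes, conditional independence, absolute continuity modulo $s\mathbb{Z}$, Peierls counting), but the step that makes the argument close is missing, and the fix you sketch for it does not work.

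The problem you correctly identify is that your global good event $G^\star = \bigcap_{w\in\mathscr{L}_R} G_w$ forces $M \gtrsim \sqrt{\log R}$ (the maximum over $O(R^2)$ sites of the harmonic-extension oscillation is indeed of order $\sqrt{\log R}$, so a fixed $M$ cannot give $\mathbb{P}[(G^\star)^c]\le\iota/2$ uniformly in $R$), which feeds back into $C_3(M)$ in the Peierls series and would require $c = c(R)$. Your proposed remedy is to subtract $n_i s$ from $g_i$ before invoking Lemma~\ref{lem:whole-plane-absolute}, but this cannot help: the hypothesis of that lemma is a bound on the \emph{oscillation} $\sup_{u,v}|g(u)-g(v)|$, which is invariant under adding a constant. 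The lemma already performs exactly this shift internally to convert the oscillation bound into an absolute-value bound; the shift does not reduce the oscillation itself. So the modulo-$s\mathbb{Z}$ structure buys you precisely what Lemma~\ref{lem:whole-plane-absolute} already exploits, and no more.

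The paper resolves this without a global event $G^\star$. It fixes $M$ depending only on $N$ and $\varrho$ (not on $R$), accepts that the set $\mathscr{U}$ of sites where the fluctuation bound $\mathcal{H}_y(M)$ fails is nonempty, and instead proves a geometric confinement statement about $\mathscr{U}$ (Lemma~\ref{lem:gff-nice}): $\mathscr{U}$ is covered by a multi-scale hierarchy of boxes $\{\mathcal{A}_m\}_{m}$, with well-separated boxes at each scale, constructed via an inductive ``nice/bad vertex'' coarse-graining of the GFF (Definition~\ref{def:nice-boxes}, Lemma~\ref{lem:good}, Lemma~\ref{lem:cover}). The Peierls argument is then carried out not on $\mathbb{Z}^2$ but on a fractal graph whose vertices include these coarse boxes (Definition~\ref{def:*-connect}); the key combinatorial input (Lemma~\ref{lem:path}) shows that any long connected path on this fractal graph must still consist of at least a constant fraction of unit boxes, on which $\mathcal{H}_w(M)$ holds and the absolute continuity bound applies with a fixed $M$. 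This multi-scale path-counting replaces your union bound over $G^\star$ and is what allows $c$ to depend only on $(T,\varrho)$. Without this ingredient (or a genuinely different way to decouple the constant $c$ from $R$), your argument does not close.
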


Proposition~\ref{prop:whole-GFF} can be viewed as percolation on the Gaussian free field: If we interpret the occurrence of the event $\mathcal E_y$ as the vertex $y$ being open, then Proposition~\ref{prop:whole-GFF} essentially states that if the probability of each vertex being open is overwhelmingly high, the overall configuration resembles a supercritical percolation model. We state this proposition in a general form as it may be of independent interest. For the proof of Lemma~\ref{lem:induction-bound}, we only require the special case where $s=2\pi\chi \in (\frac{2\pi}{\sqrt{6}},\frac{2\pi}{\sqrt{2}})$ and $\varrho=4$, together with Lemma~\ref{lem:whole-plane-absolute}.

\begin{proof}[Proof of Lemma~\ref{lem:induction-bound} assuming Proposition~\ref{prop:whole-GFF}]
    Recall the setup of Lemma~\ref{lem:induction-bound} and Definition~\ref{def:(m,n)-good}. Let $h$ be the imaginary geometry GFF on $\mathbb{B}$. Fix $p>0$ and $\epsilon>0$, and set $R = \lfloor \epsilon^{-1} \rfloor$. For a given $n \geq 0$ and $x \in V_{n} = \epsilon^{n}\mathbb{Z}^2 \cap (-3/4,3/4)^2$, define $\mathcal L_x := B(x, \epsilon^{n}) \cap V_{n+1}$. By assumption, for any $y \in \mathcal L_x$, we have
    $$
    \mathbb{P}[y \mbox{ is } (m-1,n+1) \mbox{-good}] \geq 1-p \quad \mbox{and} \quad \mathbb{P}[y \mbox{ is } (0,n+1) \mbox{-good}] \geq 1-p.
    $$
    This implies that
    \begin{equation}\label{eq:lem2.10-sec5-bound}
        \mathbb{P}[y \mbox{ is } (m-1,n+1) \mbox{-good and } (0,n+1) \mbox{-good}] \geq 1-2p.
    \end{equation}
    From now on, we will not use the specific definitions of $(m-1,n+1)$-good or $(0,n+1)$-good, but only the fact that they are measurable with respect to $h|_{B(y, 4 \epsilon^{n+1})}$ modulo $2 \pi \chi \mathbb{Z}$ (Observation~\eqref{claim:remark-mn-3} from Remark~\ref{rmk:(m,n)-good}). We now compare the law of
    \begin{equation}\label{eq:lem2.10-sec5-field}
        \widetilde h(z):=h(x + \epsilon^{n+1} z) \quad \mbox{and} \quad \mathfrak h (z) \quad \mbox{for } z \in B(0,3R/2).
    \end{equation}
    We perform the domain Markov decomposition in $B(0,15R/8)$. Let $\widetilde h'$ (resp.\ $\mathfrak h'$) be the harmonic extension of $\widetilde h|_{B(0,2R) \setminus B(0,15R/8)}$ (resp.\ $\mathfrak h|_{\mathbb{C} \setminus B(0,15R/8)}$) in $B(0,15R/8)$ and write $\mathring{\widetilde h} = \widetilde h - \widetilde h'$ (resp.\ $\mathring{\mathfrak h} = \mathfrak h - \mathfrak h'$). By Lemma~\ref{lem:gff-estimate-5.11}, as $M \to \infty$, with probability at least $1- o_M(1)$ uniformly in the other variables,
    $$
    \sup_{u,v \in B(0,7R/4)}| \widetilde h'(u) - \widetilde h'(v)| \leq M \quad \mbox{and} \quad \sup_{u,v \in B(0,7R/4)}| \mathfrak h'(u) - \mathfrak h'(v)| \leq M.
    $$
    On this event, Lemma~\ref{lem:whole-plane-absolute} allows us to compare the Radon-Nikodym derivatives of the two fields in~\eqref{eq:lem2.10-sec5-field}. Applying Proposition~\ref{prop:whole-GFF} with $s = 2 \pi \chi \in (\frac{2\pi}{\sqrt{6}},\frac{2\pi}{\sqrt{2}})$ and $\varrho = 4$ then yields Lemma~\ref{lem:induction-bound}.
    
    To be precise, we first take the constant $c_1$ in Lemma~\ref{lem:induction-bound} sufficiently small so that for all $p \in (0,c_1)$, the bound~\eqref{eq:lem2.10-sec5-bound}---combined with absolute continuity---ensures that~\eqref{eq:prop5.1-1} holds. Applying Proposition~\ref{prop:whole-GFF} with $\iota< \frac{1}{4}$ sufficiently small (depending only on $p$) and using absolute continuity again, Claim~\eqref{eq:prop5.1-2} implies that the event of $x$ being $(m,n)$-good (i.e., the $|\cdot|_\infty$-diameter of connected components is bounded above by $\iota R \epsilon^{n + 1} \leq \frac{1}{4} \epsilon^{n}$) holds with probability at least $1-p$, provided $\epsilon$ is sufficiently small (depending only on $p$), which completes the proof.
\end{proof}

\subsection{Proof outline of Proposition~\ref{prop:whole-GFF}}\label{subsec:outline-prop5.1}

In this section, we prove Proposition~\ref{prop:whole-GFF} modulo Lemmas~\ref{lem:gff-nice} and~\ref{lem:path}. These two lemmas are then proved in Sections~\ref{subsec:coarse} and~\ref{subsec:count}, respectively. Without loss of generality, we assume that $\varrho$ is a positive integer.

\subsubsection{Coarse graining of the GFF}

Recall that $\mathfrak h$ is a whole-plane GFF normalized so that $\mathfrak h_{2R} (0) = 0$. For $y \in \mathscr{L}_R$, let $\mathfrak h^{y, 2\varrho}$ be the harmonic extension of $\mathfrak h|_{\mathbb{C} \setminus B(y, 2\varrho)}$ on $B(y, 2\varrho)$. By Lemma~\ref{lem:whole-plane-markov}, $\mathfrak h - \mathfrak h^{y, 2\varrho}$ has the law of a zero-boundary GFF on $B(y,2\varrho)$, independent of $\mathfrak h|_{\mathbb{C} \setminus B(y,2\varrho)}$. Fix a constant $M>0$ to be chosen later. For $y \in \mathscr{L}_R$, define the event
\begin{equation}\label{eq:def-nice}
\mathcal{H}_y(M) := \Big\{ \sup_{u,v \in B(y, 3\varrho/2)} | \mathfrak h^{y, 2\varrho} (u) - \mathfrak h^{y, 2\varrho} (v)| \leq M \Big\}.
\end{equation}
Using local absolute continuity (Lemma~\ref{lem:whole-plane-absolute}), we obtain the following lemma.
\begin{lemma}\label{lem:sec5-pre}
    For any $\lambda>0$, $T,\varrho \ge 1$ and $M >0$, there exists a constant $c=c(\lambda,T,\varrho,M)>0$ such that the following holds for any $s \in (0,T)$, $y \in \mathscr{L}_R$ and any event $\mathcal{E}_y$ that is measurable with respect to $\mathfrak h|_{B(y, \varrho)}$ modulo $s \mathbb{Z}$ and satisfies $\mathbb{P}[\mathcal{E}_y] \geq 1 - c$. On the event $\mathcal{H}_y(M)$, conditioned on $\mathfrak h|_{\mathbb{C} \setminus B(y,2\varrho)}$, the event $\mathcal{E}_y$ occurs with probability at least $1 - \lambda$.
\end{lemma}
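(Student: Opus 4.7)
The plan is to combine the domain Markov decomposition of $\mathfrak h$ across $\partial B(y, 2\varrho)$ with Lemma~\ref{lem:whole-plane-absolute}, applied in both directions. By Lemma~\ref{lem:whole-plane-markov} we would decompose $\mathfrak h|_{B(y, 2\varrho)} = \mathring{\mathfrak h} + g$, where $g := \mathfrak h^{y, 2\varrho}$ is harmonic in $B(y, 2\varrho)$ and determined by $\mathfrak h|_{\mathbb{C} \setminus B(y, 2\varrho)}$, while $\mathring{\mathfrak h}$ is an independent zero-boundary GFF on $B(y, 2\varrho)$. Letting $A$ denote the measurable set of generalised functions on $B(y, \varrho)$ modulo $s\mathbb{Z}$ with $\mathcal{E}_y^c = \{\mathfrak h|_{B(y, \varrho)} \in A \text{ mod } s\mathbb{Z}\}$, and setting $p(g) := \mathbb{P}[(\mathring{\mathfrak h} + g)|_{B(y, \varrho)} \in A \text{ mod } s\mathbb{Z}]$, the conditional probability to be bounded is precisely $p(g)$, while the hypothesis becomes $\mathbb{E}_g[p(g)] = \mathbb{P}[\mathcal{E}_y^c] \le c$, where $\mathbb{E}_g$ denotes expectation over $\mathfrak h|_{\mathbb{C} \setminus B(y, 2\varrho)}$.

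The next step is to apply Lemma~\ref{lem:whole-plane-absolute} with $r = 2\varrho$, $b' = 3/4$, $b = 1/2$, so that $b'r = 3\varrho/2$ and $br = \varrho$. On the event $\mathcal{H}_y(M)$ the harmonic function $g$ has fluctuation at most $M$ on $B(y, 3\varrho/2)$, so the Radon--Nikodym derivative $H_g$ of the law of $(\mathring{\mathfrak h} + g)|_{B(y, \varrho)}$ with respect to the law of $\mathring{\mathfrak h}|_{B(y, \varrho)}$, both modulo $s\mathbb{Z}$, has all finite positive and negative moments bounded by a constant $C(T, \varrho, M)$. Cauchy--Schwarz then gives, on $\mathcal{H}_y(M)$,
\begin{equation*}
p(g) \;=\; \mathbb{E}\bigl[\mathbf{1}_A\, H_g\bigr] \;\le\; \mathbb{E}[H_g^2]^{1/2}\, p(0)^{1/2} \;\le\; C(T, \varrho, M)^{1/2}\, p(0)^{1/2},
\end{equation*}
while the same inequality in the reverse direction (using the moments of $H_g^{-1}$ to write $p(0) = \mathbb{E}[\mathbf{1}_A H_g^{-1}]$ under the measure $\mu_g$) gives, for any fixed $M_0$ and on $\mathcal{H}_y(M_0)$,
\begin{equation*}
p(0) \;\le\; C(T, \varrho, M_0)^{1/2}\, p(g)^{1/2}, \qquad\text{equivalently}\qquad p(g) \;\ge\; p(0)^2 / C(T, \varrho, M_0).
\end{equation*}

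To close the loop, I would use Lemma~\ref{lem:gff-estimate-5.11} to fix $M_0 = M_0(\varrho)$ large enough that $\mathbb{P}[\mathcal{H}_y(M_0)] \ge 1/2$; by scale invariance of the whole-plane GFF this choice is uniform in $R$ and $y$. The averaged hypothesis then yields
\begin{equation*}
c \;\ge\; \mathbb{E}_g\bigl[p(g)\mathbf{1}_{\mathcal{H}_y(M_0)}\bigr] \;\ge\; \tfrac{1}{2}\, p(0)^2 / C(T, \varrho, M_0),
\end{equation*}
so $p(0) \le C'(T, \varrho)\, c^{1/2}$. Substituting back yields $p(g) \le C(T, \varrho, M)^{1/2}\, C'(T, \varrho)^{1/2}\, c^{1/4}$ on $\mathcal{H}_y(M)$, which is at most $\lambda$ once $c = c(\lambda, T, \varrho, M)$ is taken sufficiently small. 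The only conceptual subtlety, and the key step of the argument, is that the hypothesis delivers an averaged bound on $p(g)$ whereas the conclusion demands a pointwise bound in $g$; the intermediate event $\mathcal{H}_y(M_0)$ provides a set of probability at least $1/2$ on which the reverse comparison $p(g) \gtrsim p(0)^2$ holds, and this is precisely what enables one to transfer the averaged estimate back to a pointwise estimate on $\mathcal{H}_y(M)$.
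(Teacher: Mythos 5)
Your proof is correct. The paper itself does not supply an explicit argument for this lemma, stating only that it follows from Lemma~\ref{lem:whole-plane-absolute}; your write-up supplies the details, and the route you take is the natural one. The core of the argument---that the hypothesis only gives an averaged bound $\mathbb{E}_g[p(g)]\le c$ whereas the conclusion requires a pointwise bound on $p(g)$ for all $g$ consistent with $\mathcal{H}_y(M)$---is exactly the subtlety that needs addressing, and your device of comparing both through the reference point $g=0$, first bounding $p(g)\lesssim p(0)^{1/2}$ on $\mathcal{H}_y(M)$ by Cauchy--Schwarz with the $H_g$-moment bound from Lemma~\ref{lem:whole-plane-absolute}, then bounding $p(0)\lesssim p(g)^{1/2}$ on a second intermediate event $\mathcal{H}_y(M_0)$ of probability at least $1/2$ (via Lemma~\ref{lem:gff-estimate-5.11}, claim~\ref{lem5.9-claim-1}), and finally integrating the latter against the hypothesis to get $p(0)\lesssim c^{1/2}$ and hence $p(g)\lesssim c^{1/4}$, works as stated. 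Two minor remarks: the constant $M_0$ is in fact universal (the constant in Lemma~\ref{lem:gff-estimate-5.11} depends only on the fixed ratio $3/4$, not on $\varrho$), though writing $M_0(\varrho)$ is harmless; and the containment $B(y,2\varrho)\subset B(0,2R)$ needed to invoke Lemma~\ref{lem:gff-estimate-5.11} for every $y\in\mathscr{L}_R$ uses that $R$ is taken sufficiently large relative to $\varrho$, which is consistent with the hypotheses of Proposition~\ref{prop:whole-GFF}.
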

Define the set
\begin{equation}
    \mathscr{U} = \mathscr{U}(M) := \{y \in \mathscr{L}_R: \mathcal{H}_y(M) \mbox{ does not occur} \}.
\end{equation}
The following lemma controls the geometry of $\mathscr{U}$, whose proof will be postponed to Section~\ref{subsec:coarse}.

\begin{lemma}\label{lem:gff-nice}
    Fix $N \geq 100$. For all sufficiently large $M$ (which may depend on $N$ and $\varrho$), with probability $1 - o_R(1)$ as $R \to \infty$, the set $\mathscr{U}$ is $N$-separated in the following sense. Let $\mathsf{m} = \mathsf{m}(N, R, \varrho)$ be the largest integer $m$ such that $N^m \leq \varrho^{-1} 10^{-4} R$. There exists a sequence of vertex sets $\{\mathcal{A}_m\}_{0 \leq m \leq \mathsf m - 1}$ satisfying:
    \begin{enumerate}[i.]
        \item $\mathcal{A}_m \subset \varrho N^m \mathbb{Z}^2 \cap \mathscr{L}_R$ for $0 \leq m \leq \mathsf m - 1$; \label{lem5.4-condition1}
        \item For $0 \leq m \leq \mathsf m - 1$, each vertex in $\mathcal{A}_m$ has $|\cdot|_\infty$-distance at least $\frac{1}{10} \varrho N^{m+1}$ away from each other; \label{lem5.4-condition2}
        \item We have $\mathscr{U} \subset \cup_{m=0}^{\mathsf{m} - 1} \cup_{y \in \mathcal{A}_m} B(y, 10 \varrho N^m)$. \label{lem5.4-condition3}
    \end{enumerate}
\end{lemma}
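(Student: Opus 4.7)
The plan is to construct $\{\mathcal{A}_m\}$ via a greedy multiscale clustering of $\mathscr{U}$, with a single probabilistic input: by Lemma~\ref{lem:gff-estimate-5.11}(1), $p := \mathbb{P}[\mathcal{H}_y(M)^c] \le C(\varrho)\, e^{-M^2/C(\varrho)}$, so by choosing $M$ large (depending on $N,\varrho$) we may take $p$ as small as needed.

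At each scale $m \ge 0$, I would group the points of $\mathscr{U}$ into $\sim_m$-components with respect to the relation $y \sim_m y'$ iff $|y-y'|_\infty \le \tau_m$, where $\tau_m \asymp \varrho N^m$ is a tuned threshold. Distinct components are then separated by $|\cdot|_\infty$-distance $> \tau_m$, which for $N \ge 100$ can be arranged so that the corresponding representatives satisfy the required separation $\ge \frac{1}{10}\varrho N^{m+1}$. I would assign a component $C$ to level $m$ whenever $\mathrm{diam}(C) \le C_0\varrho N^m$ for a suitable constant $C_0$ chosen so that $C$ fits inside $B(y_C, 10 \varrho N^m)$ for some grid point $y_C \in \varrho N^m \mathbb{Z}^2$ within $|\cdot|_\infty$-distance $\varrho N^m$ of $C$; otherwise $C$ is promoted to the next scale. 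Collecting the chosen $y_C$ into $\mathcal{A}_m$ yields \eqref{lem5.4-condition1}, \eqref{lem5.4-condition2} and \eqref{lem5.4-condition3} by construction, provided the promotion process terminates by scale $\mathsf{m}-1$ with high probability.

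The core probabilistic task is to prove termination, i.e., to show that with probability $1-o_R(1)$, no $\sim_m$-connected chain in $\mathscr{U}$ spans $|\cdot|_\infty$-diameter $> C_0 \varrho N^m$ for any $m \in \{0, \ldots, \mathsf m - 1\}$. From such a chain one extracts a subchain $z_1, \ldots, z_\ell$ of length $\ell \asymp N$ whose points are pairwise $|\cdot|_\infty$-separated by $\ge 10\varrho$. A union bound over the $O(R^2)$ starting positions and the polynomial-in-$R$ number of chain shapes then reduces the lemma to the decorrelation estimate
\[
\mathbb{P}[z_1,\ldots,z_\ell \in \mathscr{U}] \le (Cp^c)^\ell
\]
for constants $C,c>0$ independent of $R$ and $\ell$; taking $M$ large then beats the polynomial factors and yields the claim after summing over $m$.

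The decorrelation estimate is the main obstacle, since the events $\mathcal{H}_{z_i}(M)^c$ are not independent---they all depend on the same whole-plane GFF. The approach is to iterate the domain Markov decomposition of Lemma~\ref{lem:whole-plane-markov} across the disjoint balls $B(z_i,2\varrho)$: conditioning on the GFF outside $\bigcup_i B(z_i,2\varrho)$ makes the zero-boundary residuals inside these balls independent, and on a high-probability event (controlled by Lemma~\ref{lem:gff-estimate-5.11}) the conditional oscillation of each harmonic extension $\mathfrak{h}^{z_i,2\varrho}|_{B(z_i,3\varrho/2)}$ is comparable to its unconditional law via the Radon--Nikodym bound of Lemma~\ref{lem:whole-plane-absolute}. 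Chaining these comparisons across the subchain produces the desired near-product bound on the joint probability and closes the argument.
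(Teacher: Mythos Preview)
Your decorrelation step contains a genuine gap. The events $\mathcal{H}_{z_i}(M)^c$ are \emph{not} local: $\mathfrak{h}^{z_i,2\varrho}$ is the harmonic extension of $\mathfrak{h}|_{\mathbb{C}\setminus B(z_i,2\varrho)}$, so it is determined by the field \emph{outside} $B(z_i,2\varrho)$. If you condition on $\mathfrak{h}|_{\mathbb{C}\setminus\bigcup_j B(z_j,2\varrho)}$ and write $\mathfrak{h}^{z_i,2\varrho}=g_i+X_i$ with $X_i$ the contribution of the independent zero-boundary residuals, then $g_i$ is the harmonic extension of the conditioning data into $B(z_i,2\varrho)$, and its oscillation on $B(z_i,3\varrho/2)$ is a Gaussian of variance $O(1)$ (not $o(1)$); moreover the $g_i$'s for nearby $z_i$ are strongly correlated, since they are governed by the same field values just outside the balls. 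Thus the conditional factorisation you invoke does not give the bound $\mathbb{P}[z_1,\ldots,z_\ell\in\mathscr{U}]\le (Cp^c)^\ell$: on the event that the $g_i$'s are large (which is a correlated event in the conditioning data), all $z_i$ land in $\mathscr{U}$ regardless of the $X_i$'s. Lemma~\ref{lem:whole-plane-absolute} is of no help here, since it compares laws of the field restricted to a set, whereas $\mathcal{H}_{z_i}(M)$ is not measurable with respect to $\mathfrak{h}|_{B(z_i,\varrho)}$.

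The paper handles exactly this obstruction by a multiscale telescoping decomposition of the harmonic extension, $\mathfrak{h}^{z,2\varrho}=\mathfrak{h}^{z,2\varrho}_{z_1,4a_1}+\sum_{m=1}^{\mathsf m-1}\mathfrak{h}^{z_m,4a_m}_{z_{m+1},4a_{m+1}}+\mathfrak{h}^{z_{\mathsf m},4a_{\mathsf m}}$ with $a_m=\varrho N^m$ (Definition~\ref{def:harmonic-extension} and~\eqref{eq:gff-decompose}). Each summand at scale $m$ depends only on the zero-boundary GFF $\mathring{\mathfrak{h}}^{z_{m+1},4a_{m+1}}$ in a ball of radius $4a_{m+1}$, and these \emph{are} independent for points separated by $\gtrsim a_{m+1}$. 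The gradient bound for harmonic functions makes the scale-$m$ contribution to the oscillation on $B(z,3\varrho/2)$ of order $N^{-m}$, so demanding that it exceed $N^{m/2}M$ at its own scale (Conditions~\eqref{sec5.2:condition-a}--\eqref{sec5.2:condition-c}) costs probability $e^{-N^m M^2/C}$. This feeds a renormalisation recursion on ``nice/bad $a_j$-vertices'' (Definition~\ref{def:nice-boxes}, Lemma~\ref{lem:good}) of the form $p_j\le CN^2 e^{-N^{j-1}M^2/C}+CN^4 p_{j-1}^2$, which drives $p_{\mathsf m}$ to zero and shows all top-scale vertices are nice with probability $1-o_R(1)$. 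The sets $\mathcal{A}_m$ are then read off from the locations of bad $a_m$-vertices (Lemma~\ref{lem:cover}). In short, the missing idea in your proposal is to localise the harmonic extension scale by scale; without this, the required near-independence simply does not hold.
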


Lemma~\ref{lem:sec5-pre} essentially states that on $\mathbb{Z}^2 \setminus \mathscr{U}$, the set of vertices where $\mathcal{E}_y$ does not occur can be stochastically dominated by a Bernoulli site percolation with arbitrary low intensity. In order to prove Proposition~\ref{prop:whole-GFF}, it suffices to enumerate the connected paths on $\mathbb{Z}^2 \setminus \mathscr{U}$, which will be carried out in Lemma~\ref{lem:path} below.

\subsubsection{Counting paths on a fractal graph}

Fix $N \geq 100$ and an arbitrary sequence of vertex sets $\{\mathcal{A}_m\}_{m \geq 0}$ on $\mathbb{Z}^2$ such that

\begin{enumerate}
    \item \label{condition:fractal-1} $\mathcal{A}_m \subset \varrho N^m \mathbb{Z}^2$ for $m \geq 0$;
    
    \item \label{condition:fractal-2} For $m \geq 0$, each vertex in $\mathcal{A}_m$ has $|\cdot|_\infty$-distance at least $\frac{1}{10} \varrho N^{m+1}$ away from each other.
\end{enumerate}
For two subsets $A,B \subset \mathbb{R}^2$, recall that $d(A,B)$ denotes the $|\cdot|_\infty$-distance between them.

\begin{definition}\label{def:*-connect}
    Given any vertex set $\{\mathcal{A}_m\}_{m \geq 0}$ satisfying Conditions~\ref{condition:fractal-1} and~\ref{condition:fractal-2}, let 
    \begin{equation}\label{eq:step-fractal}
        \mathcal{S}_0:= \{ B(x, 1) : x \in \mathbb{Z}^2\} \quad \mbox{and} \quad \mathcal{S}_m := \{ B(x , 10 \varrho N^{m-1}) : x \in \mathcal{A}_{m-1} \} \quad \mbox{for } m \geq 1.
    \end{equation}
    We consider \textbf{self-avoiding} paths $P=(x_1,x_2,\ldots,x_L)$ where each $x_i$ takes values in $\cup_{m=0}^\infty \mathcal{S}_m$. By self-avoiding we mean that $x_i \neq x_j$ for any $i \neq j$. The length of $P$ is denoted by ${\rm len}(P)=L$. The diameter of $P$ is referred to as the $|\cdot|_\infty$-diameter of $\cup_{i=1}^L x_i$ viewed as a subset of $\mathbb C$. We say that $P$ is \textbf{connected} if $d(x_i, x_{i+1}) \leq 40 \varrho$ for $1 \leq i \leq L-1$.
    
    For $k \geq 0$, we say that a self-avoiding connected path is at level $k$ if it consists only of boxes of the form $\cup_{m=0}^k \mathcal{S}_m$. In particular, a self-avoiding connected path at level $0$ consists only of boxes of the form $B(x,1)$.
\end{definition}

The following lemma, which will be proved in Section~\ref{subsec:count}, provides an upper-bound on the number of self-avoiding connected paths, and also states that each such path contains a uniformly positive fraction of $1$-boxes.

\begin{lemma}\label{lem:path}
    There exists a constant $C_1 = C_1(\varrho) \geq 100$ such that the following holds for all $N \geq C_1$ and any vertex sets $\{\mathcal{A}_m\}$ satisfying Conditions~\ref{condition:fractal-1} and~\ref{condition:fractal-2}, and $k \geq 0$.
    
    \begin{enumerate}
        \item Any self-avoiding connected path at level $k$ whose $|\cdot|_\infty$-diameter is at least $N^k$ has length at least $N^{k/2}$. \label{claim:lem-path-1}

        \item For all $L \geq N^{k/2}$, the number of self-avoiding connected paths at level $k$ with length $L$ and starting from a fixed box is at most $(C_1)^L$. Moreover, any such path consists of at least $1/2$ fraction of boxes from $\mathcal{S}_0$. \label{claim:lem-path-2}
    \end{enumerate}
    
\end{lemma}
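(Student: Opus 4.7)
The plan is to prove both parts by induction on $k$, leveraging Condition~\ref{condition:fractal-2} together with the $40\varrho$-connectivity constraint. A key geometric consequence of this setup is that for $N \geq C_1(\varrho)$ sufficiently large, two boxes from the same $\mathcal{S}_m$ with $m \geq 1$ cannot be adjacent in any path: their minimum center-distance $\frac{\varrho N^m}{10}$ exceeds $40\varrho + 2 \cdot 10\varrho N^{m-1}$. In particular, between two consecutive $\mathcal{S}_m$ boxes in the path, the intermediate sub-path must contain at least one box of strictly lower level, and a simple length comparison shows its $|\cdot|_\infty$-diameter is at least $\frac{\varrho N^m}{20}$.

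For part~\ref{claim:lem-path-1}, I would argue by induction on $k$. The base case $k=0$ is immediate since $\mathcal{S}_0$-boxes have diameter $2$. For the inductive step, given a level-$k$ self-avoiding connected path $P$ of diameter $\geq N^k$, I split $P$ at its $\mathcal{S}_k$ boxes; let $L_k$ denote their number. By the observation above, each internal sub-path is a level-$(k-1)$ path of diameter $\geq \frac{\varrho N^k}{20} \geq N^{k-1}$, hence by IH has length $\geq N^{(k-1)/2}$. A case analysis on $L_k$ then finishes the argument: if $L_k \geq \sqrt{N}$, the $\geq \sqrt{N}-1$ internal sub-paths contribute total length $\geq \sqrt{N} \cdot N^{(k-1)/2} = N^{k/2}$; otherwise, the $\mathcal{S}_k$-boxes contribute only $\lesssim \varrho\sqrt{N} \cdot N^{k-1} \ll N^k$ to the diameter, so one sub-path must have diameter $\gtrsim N^{k-1/2}/\varrho$, and strengthening the IH to a bound of the form (length) $\gtrsim \sqrt{D}$ for any level-$j$ path of diameter $D \geq N^j$—proved by an entirely parallel induction—yields the conclusion after absorbing all constants into $C_1$ by enlarging $N$.

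For part~\ref{claim:lem-path-2}, I would establish the fraction bound and the enumeration bound separately. For the fraction bound, I proceed by stripping the highest level visited: the number of $\mathcal{S}_m$-boxes in the path with $m \geq 1$ is bounded via a density estimate for $\mathcal{A}_{m-1}$ in the $40\varrho$-neighborhood of the path, together with the fact (from the geometric observation above) that each such box must be flanked by intermediate boxes of lower level. Summing the resulting bounds over $m \geq 1$ and using $k \leq 2 \log_N L$ (which follows from part~\ref{claim:lem-path-1} applied with $D = N^k$) gives at least $L/2$ boxes in $\mathcal{S}_0$ for $N \geq C_1(\varrho)$. For the enumeration, at each step the number of neighbors within $|\cdot|_\infty$-distance $40\varrho$ from $\mathcal{S}_0$ is at most $O(\varrho^2)$ if the current box is in $\mathcal{S}_0$, and at most one from each higher $\mathcal{S}_m$. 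Combined with the fraction bound—which guarantees that at least half the steps are $\mathcal{S}_0$-to-$\mathcal{S}_0$ transitions with uniformly $O(\varrho^2)$ choices—and a careful accounting of the (few) transitions involving higher-level boxes, the total count is bounded by $C_1^L$.

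The main obstacle will be the counting in part~\ref{claim:lem-path-2}: a naive per-step bound carries a factor of $k$ coming from potential neighbors at each of the $k+1$ levels, which must be absorbed by combining part~\ref{claim:lem-path-1} (which forces $k \leq 2\log_N L$) with the fraction bound, so that the effective constant depends only on $\varrho$.
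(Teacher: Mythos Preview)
Your inductive approach to part~\ref{claim:lem-path-1} is close in spirit to the paper's Lemma~5.15, which also splits the path at its top-level boxes. The paper, however, proves a stronger \emph{linear} bound $\mathrm{len}(P) \geq \frac{1}{C}(1-\tfrac{C}{N})^k D$ rather than your $\sqrt{D}$, and this extra strength feeds directly into the quantitative box-count (Lemma~5.16) needed later. Your case analysis and ``strengthened IH'' sketch could be made to work, but it would still only yield the weaker square-root bound.

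The genuine gap is in your enumeration argument for part~\ref{claim:lem-path-2}. You correctly identify the obstacle---a naive per-step bound gives $O(\varrho^2)+k$ choices at each $\mathcal{S}_0$ box, one from each higher level---but your proposed fix does not close it. The fraction bound says at least half the \emph{boxes} lie in $\mathcal{S}_0$; it does \emph{not} say half the \emph{transitions} are $\mathcal{S}_0\to\mathcal{S}_0$, and even at a genuine $\mathcal{S}_0\to\mathcal{S}_0$ step you have already paid the factor $k$ to decide you are staying at level $0$. Feeding in $k \leq 2\log_N L$ from part~\ref{claim:lem-path-1} therefore yields only $(C\varrho^2 + 2\log_N L)^L$, which is not bounded by $C_1^L$ for any fixed $C_1$. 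The transitions \emph{out of} a large box are even worse: an $\mathcal{S}_m$ box has $\sim (\varrho N^{m-1})^2$ neighbors in $\mathcal{S}_0$, and the fraction bound alone gives no control on $\sum_m m\,a_m$, which is what governs $\prod_m N^{2m a_m}$.

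The paper sidesteps all of this with a weight (generating-function) argument: assign $w(a)=\beta N^{-8m}$ to each $a\in\mathcal{S}_m$, show $\sum_P w(P)\leq 1$ over all self-avoiding connected paths from a fixed box via a one-step bound $\sum_b \sqrt{w(a)w(b)}\leq 1/2$, and then prove $w(P)\geq c^L$ for paths of length $L\geq N^{k/2}$ using a refined box-count $a_m\leq 2^{k-m}+C(N-C)^{-m}L$. The enumeration bound $|\{P\}|\leq c^{-L}$ is then immediate. This device is the key idea your sketch is missing; a direct counting argument can in principle be pushed through, but it requires essentially the same refined box-count and a delicate balancing of the $2^{k-m}$ term against $L\geq N^{k/2}$, none of which appears in your outline.
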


\subsubsection{Concluding: Proof of Proposition~\ref{prop:whole-GFF}}

Fix $N = C_1$ as in Lemma~\ref{lem:path} and any $M = M(N,\varrho)$ sufficiently large so that the condition in Lemma~\ref{lem:gff-nice} holds. We assume that there exists a sequence of subsets $\{\mathcal{A}_m\}_{0 \leq m \leq \mathsf m - 1}$ satisfying Conditions~\ref{lem5.4-condition1}, \ref{lem5.4-condition2}, \ref{lem5.4-condition3} in Lemma~\ref{lem:gff-nice}, which occurs with probability $1 - o_R(1)$.

Suppose that $\cup_w \overline{B(w, 1)}$ in~\eqref{eq:prop5.1-2} contains a connected component with $|\cdot|_\infty$-diameter at least $\frac{1}{4} R \geq 2500 \varrho N^{\mathsf m}$. Then, there exists a sequence of vertices $(w_1, w_2, \ldots, w_L)$ in $\mathscr{L}_R$ such that the event $\mathcal{E}_{w_i}$ does not occur, and

\begin{equation}\label{eq:sec5.1-path-1}
    |w_i - w_{i+1}|_\infty \leq 2 \quad \mbox{for } 1 \leq i \leq L -1 \quad \mbox{and} \quad |w_1 - w_L|_\infty \geq \tfrac{1}{4} R - 2.
\end{equation}

\begin{figure}[htbp]
    \centering
    \includegraphics[width=0.9\textwidth]{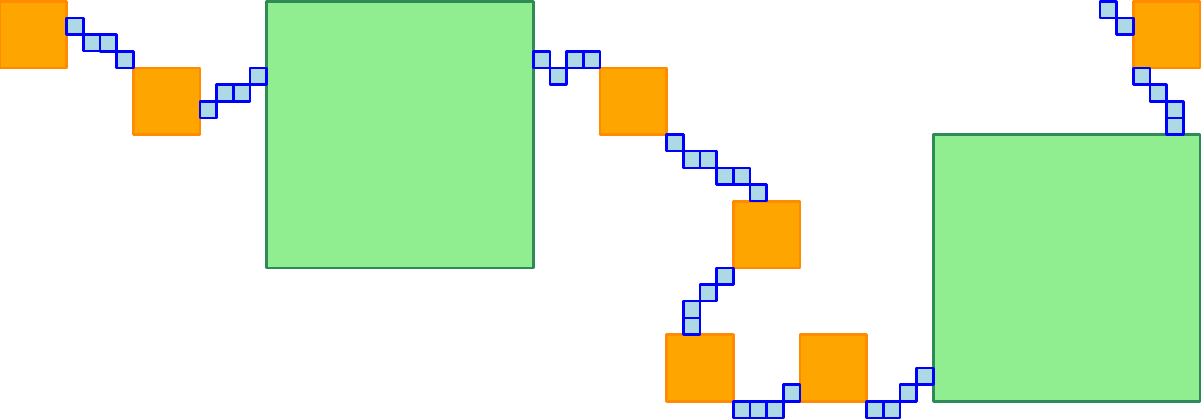}
    \caption{A schematic diagram of $(B_1,B_2,\ldots,B_L)$, which, after a loop erasure procedure, yields a self-avoiding connected path $(\widetilde B_1,\ldots,\widetilde B_\tau)$. Note that different boxes may have overlap; however, for illustrative purposes, all boxes in the figure are depicted as mutually disjoint.}
    \label{fig:connected-path}
\end{figure}

We now replace $(w_1, w_2, \ldots, w_L)$ with a self-avoiding connected path at level $\mathsf m$, as in Definition~\ref{def:*-connect}; see Figure~\ref{fig:connected-path}. For $1 \leq i \leq L$, if $w_i$ is contained in a box of the form $B(z, 10 \varrho N^m)$ for some $0 \leq m \leq \mathsf m-1$ and $z \in \mathcal{A}_m$, we replace $w_i$ with any such box, denoted by $B_i$. Otherwise, we set $B_i = B(w_i, 1)$. By Condition~\ref{lem5.4-condition3} in Lemma~\ref{lem:gff-nice}, the event $\mathcal{H}_{w_i}(M)$ occurs whenever $B_i = B(w_i, 1)$. Moreover, $d(B_i, B_{i+1})\leq 2$ for $1 \leq i \leq L-1$, and $d(B_1, B_L) \geq |w_1 - w_L|_\infty - 40 \varrho N^{\mathsf m - 1}$. We then perform a \emph{loop erasure} procedure on $(B_1, B_2,\ldots, B_L)$. Specifically, let $\widetilde B_1=B_1$ and, if we have already chosen $(\widetilde B_1,\ldots, \widetilde B_j)$, we define $\widetilde B_{j+1}$ to be the last element after $\widetilde B_j$ that is different from $\widetilde B_j$ and lies within $|\cdot|_\infty$-distance at most $40 \varrho$ from $\widetilde B_j$. Finally, we remove the last step in this loop-erased sequence. This defines a self-avoiding connected path\footnote{By construction, we always have $\widetilde B_i \neq \widetilde B_{i+1}$. Moreover, $d(\widetilde B_i, \widetilde B_j) > 40 \varrho$ whenever $|i-j| \geq 2$. Since $d(B_i, B_{i+1}) \leq 2$, the loop erasure procedure necessarily produces a connected path.} $(\widetilde B_1, \ldots, \widetilde B_\tau)$ in the sense of Definition~\ref{def:*-connect} where each $\widetilde B_i$ takes values in $\cup_{m=0}^{\mathsf m} \mathcal{S}_m$. It is easy to see that $d(\widetilde B_1, \widetilde B_\tau) \geq d(B_1, B_L) - 60 \varrho N^{\mathsf m - 1}$.\footnote{Note that $\widetilde B_1 = B_1$ and $\widetilde B_{\tau+1} = B_L$. Since $d(\widetilde B_\tau, \widetilde B_{\tau+1}) \leq 40 \varrho$, we deduce $d(\widetilde B_1, \widetilde B_\tau) \geq d(B_1, B_L) - 40 \varrho - 20 \varrho N^{\mathsf m - 1} \geq d(B_1, B_L) - 60 \varrho N^{\mathsf m - 1}$.}

We claim that all 1-boxes in $(\widetilde B_1, \ldots, \widetilde B_\tau)$ have $|\cdot|_\infty$-distance at least $20 \varrho$ from each other. By definition, $d(\widetilde B_i, \widetilde B_j) > 40 \varrho$ if $|i-j| \geq 2$. Thus, it suffices to consider the case where $\widetilde B_i, \widetilde B_{i+1}$ are two neighboring 1-boxes. Let $B$ be the step in $(B_1, \ldots, B_L)$ next to $\widetilde B_{i+1}$, which exists since $\widetilde B_{i+1}$ is not the last step. Then, we have $d(\widetilde B_i, B) > 40 \varrho$. Since $d(\widetilde B_{i+1}, B) \leq 2$, we conclude that $d(\widetilde B_i, \widetilde B_{i+1}) \geq 40 \varrho - 2 - 2 \geq 20 \varrho$.

Finally, we upper-bound the probability that there exists such a self-avoiding connected path at level $\mathsf m$ such that for any $1$-box $B(w_i,1)$ on this path, the event $\mathcal{H}_{w_i}(M)$ occurs while $\mathcal{E}_{w_i}$ does not occur. Let $\ell$ be the length of such path. Note that the diameter of the path is at least
$$
d(\widetilde B_1, \widetilde B_\tau) \geq d(B_1, B_L) - 60 \varrho N^{\mathsf m - 1} \geq |w_1 - w_L|_\infty - 100 \varrho N^{\mathsf m - 1} \geq N^{\mathsf m}.
$$
By Lemma~\ref{lem:path}, the length $\ell \geq N^{\mathsf m / 2}$ and the path contains at least $1/2$ fraction of $1$-boxes; moreover, for each fixed $\ell$, there are at most $(\mathsf m +1) \times |\mathscr{L}_R| \times (C_1)^{\ell}$ choices of path, where $(\mathsf m +1) \times |\mathscr{L}_R|$ counts the choice of the first box. In addition, for each fixed path, since all $1$-boxes $B(w_i,1)$ in it have mutual $|\cdot|_\infty$-distance at least $20 \varrho$ and the event $\mathcal{H}_{w_i}(M)$ occurs, applying Lemma~\ref{lem:sec5-pre} with $\lambda = (C_1)^{-4}$, there exists $c= c(T, \varrho, M)>0$ such that the probability that all $1$-boxes satisfy the event $\mathcal{E}_{w_i}$ is at most $\lambda^{\ell/2}$. Since $M$ is a fixed constant determined by $\varrho$, the constant $c$ depends only on $T$ and $\varrho$. Therefore, the probability of the existence of such path is at most
$$
\sum_{\ell \geq N^{\mathsf m/2}} \mathsf (\mathsf m +1) \times |\mathscr{L}_R| \times (C_1)^\ell \times \lambda^{\ell/2} \quad \mbox{with } \lambda = (C_1)^{-4}.
$$
It is easy to see that this summation is $o_{\mathsf m}(1)$, i.e., $o_R(1)$. This concludes the proposition. \qed

\subsection{Coarse-graining of the GFF: Proof of Lemma~\ref{lem:gff-nice}}\label{subsec:coarse}

We first define the harmonic extension decomposition of $\mathfrak h$. Recall that $\mathfrak h$ is a whole-plane GFF normalized so that $\mathfrak h_{2R}(0) = 0$ and it satisfies the domain Markov property stated in Lemma~\ref{lem:whole-plane-markov}.

\begin{definition}[Harmonic extension decomposition]\label{def:harmonic-extension}

For $x,y \in \mathbb{C}$ and $r,s>0$ such that $B(x,r) \subset B(y,s) \subset B(0, 2R)$, let $\mathfrak h^{y, s}$ be the harmonic extension of $\mathfrak h|_{\mathbb{C} \setminus B(y, s)}$ on $B(y, s)$, then $\mathring{\mathfrak h}^{y, s} = \mathfrak h - \mathfrak h^{y, s}$ is a zero-boundary GFF on $B(y, s)$ independent of $\mathfrak h|_{\mathbb{C} \setminus B(y, s)}$. We further define $\mathfrak h^{x, r}_{y,s}$ to be the harmonic extension of $\mathring{\mathfrak h}^{y, s}|_{B(y,s) \setminus B(x,r)}$ on $B(x, r)$.

The key property of this decomposition is that $\mathfrak h^{x, r}_{y, s}$ and $\mathfrak h^{x', r'}_{y', s'}$ are independent whenever $\overline{B(y,s)} \cap \overline{B(y',s')} = \emptyset$. This follows from the independence of $\mathring{\mathfrak h}^{y, s}$ and $\mathring{\mathfrak h}^{y', s'}$.

\end{definition}

Fix $N \geq 100$ and let the coarse-graining scale be $a_m = \varrho N^m$ for $m \geq 0$. For $R \geq 1$, recall that $\mathsf{m} = \mathsf{m}(N, R, \varrho)$ is the largest integer $m$ such that $N^m \leq \varrho^{-1} 10^{-4} R$. Then, we have 
\begin{equation}\label{eq:bound-N}
    \varrho^{-1} 10^{-4} N^{-1} R \leq N^{\mathsf m} \leq \varrho^{-1} 10^{-4} R.
\end{equation}
For each $z \in \mathscr{L}_R$ and $0 \leq m \leq \mathsf m-1$, let $z_m$ be the closest vertex (under $|\cdot|_\infty$-distance) to $z$ in the set $a_m \mathbb{Z}^2 \cap \mathscr{L}_R$. In particular, we have
\begin{equation}\label{eq:dist-zm}
    z_0 = z \quad \mbox{and} \quad |z - z_m|_\infty < a_m.
\end{equation}
This implies that 
\begin{equation}\label{eq:sec5.2-ball}
B(z,2\varrho) \subset B(z_1, 2a_1) \quad \mbox{and} \quad B(z_m, 4a_m) \subset B(z_{m+1}, 2a_{m+1}) \quad \mbox{for } 1 \leq m \leq \mathsf m-1.
\end{equation}
Using the notation in Definition~\ref{def:harmonic-extension}, we can decompose $\mathfrak h^{z,2\varrho}$ as follows:
\begin{equation}\label{eq:gff-decompose}
\mathfrak h^{z,2\varrho}(x) = \mathfrak h^{z,2\varrho}_{z_1, 4a_1}(x) + \sum_{m=1}^{\mathsf m-1} \mathfrak h^{z_m, 4a_m}_{z_{m+1}, 4a_{m+1}}(x) + \mathfrak h^{z_{\mathsf m}, 4 a_{\mathsf m}}(x) \quad \mbox{for } x \in B(z,2\varrho).
\end{equation}

To bound the fluctuation of $\mathfrak h^{z,2\varrho}$ on $B(z,3\varrho/2)$, it suffices to bound the fluctuation of each term on the right-hand side of~\eqref{eq:gff-decompose}. In particular, for $z \in \mathscr{L}_R$, the following conditions are sufficient:
\begin{enumerate}[(a)]
    \item $\sup_{u,v \in B(z,3\varrho/2)} |\mathfrak h^{z,2\varrho}_{z_1, 4a_1}(u) - \mathfrak h^{z,2\varrho}_{z_1, 4a_1}(v)| \leq M$. \label{sec5.2:condition-a}

    \item For $1 \leq m \leq \mathsf m - 1$, we have $$\sup_{u,v \in B(z_m, 3a_m)} |\mathfrak h^{z_m, 4a_m}_{z_{m+1}, 4a_{m+1}}(u) - \mathfrak h^{z_m, 4a_m}_{z_{m+1}, 4a_{m+1}}(v)| \leq N^{m/2} M.$$ \label{sec5.2:condition-b}

    \item $\sup_{u,v \in B(z_{\mathsf m}, 3 a_{\mathsf m})} |\mathfrak h^{z_{\mathsf m}, 4 a_{\mathsf m}}(u) - \mathfrak h^{z_{\mathsf m}, 4 a_{\mathsf m}}(v)| \leq N^{\mathsf m/2} M$. \label{sec5.2:condition-c}
\end{enumerate}

If Conditions~\eqref{sec5.2:condition-a},~\eqref{sec5.2:condition-b} and~\eqref{sec5.2:condition-c} are satisfied, then the event $\mathcal{H}_z(CM)$ occurs for some universal constant $C>0$.

\begin{lemma}\label{lem:nice-fluct}
    For $z \in \mathscr{L}_R$ that satisfies Conditions~\eqref{sec5.2:condition-a},~\eqref{sec5.2:condition-b} and~\eqref{sec5.2:condition-c}, we have 
    $$
    \sup_{u,v \in B(z, 3\varrho/2)} | \mathfrak h^{z, 2\varrho} (u) - \mathfrak h^{z, 2\varrho} (v)| \leq CM
    $$
    where $C>0$ is a universal constant.
\end{lemma}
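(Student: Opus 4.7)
The plan is to invoke the harmonic decomposition~\eqref{eq:gff-decompose} and bound the oscillation of each summand on $B(z, 3\varrho/2)$ separately, then use the triangle inequality. The bound on the first summand $\mathfrak h^{z, 2\varrho}_{z_1, 4a_1}$ is immediate from Condition~\eqref{sec5.2:condition-a}, giving an oscillation of at most $M$ on $B(z, 3\varrho/2) \subset B(z, 2\varrho)$.

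The main point—and the only nontrivial step—is that for $m \geq 1$, the hypotheses only bound the oscillation of $\mathfrak h^{z_m, 4a_m}_{z_{m+1}, 4a_{m+1}}$ on the ball $B(z_m, 3a_m)$ by $N^{m/2} M$, which at face value grows with $m$ and would sum to something of order $N^{\mathsf m/2} M$, far too large. The fix is to exploit that each of these summands is a harmonic function on $B(z_m, 4a_m)$, so interior regularity lets us upgrade the oscillation bound to a much sharper gradient bound on smaller subballs. Concretely, by~\eqref{eq:dist-zm} and $a_m = \varrho N^m \geq 100\varrho$ for $m \geq 1$, we have $B(z, 3\varrho/2) \subset B(z_m, 2a_m)$. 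A standard Cauchy estimate for harmonic functions (applied at each point of $B(z_m, 2a_m)$ using a ball of radius $a_m$ contained in $B(z_m, 3a_m)$) gives
\[
\sup_{x \in B(z_m, 2a_m)} \bigl| \nabla \mathfrak h^{z_m, 4a_m}_{z_{m+1}, 4a_{m+1}}(x) \bigr| \leq \frac{C}{a_m} \cdot N^{m/2} M,
\]
by Condition~\eqref{sec5.2:condition-b}. Integrating along a segment of $|\cdot|_\infty$-length at most $3\varrho$ inside $B(z, 3\varrho/2)$ then bounds the oscillation of this summand by $C' \varrho \cdot N^{m/2} M / a_m = C' N^{-m/2} M$.

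The identical argument applied to $\mathfrak h^{z_{\mathsf m}, 4a_{\mathsf m}}$ using Condition~\eqref{sec5.2:condition-c} yields an oscillation bound of $C' N^{-\mathsf m/2} M$ on $B(z, 3\varrho/2)$. Summing the three contributions gives
\[
\sup_{u,v \in B(z, 3\varrho/2)} |\mathfrak h^{z, 2\varrho}(u) - \mathfrak h^{z, 2\varrho}(v)| \leq M + C' M \sum_{m=1}^{\mathsf m} N^{-m/2} \leq CM,
\]
since $N \geq 100$ forces the geometric series to converge to a universal constant. No step is genuinely hard; the one subtle feature, which I want to highlight, is why the hypotheses are formulated on $B(z_m, 3a_m)$ rather than on the full harmonicity ball $B(z_m, 4a_m)$—namely, to reserve a definite buffer for the Cauchy gradient estimate to work.
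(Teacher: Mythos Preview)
Your proof is correct and follows essentially the same approach as the paper: decompose via~\eqref{eq:gff-decompose}, use Condition~\eqref{sec5.2:condition-a} directly for the first term, and for the remaining terms convert the oscillation bounds in Conditions~\eqref{sec5.2:condition-b}--\eqref{sec5.2:condition-c} into gradient bounds on $B(z,3\varrho/2)$ via interior regularity of harmonic functions, then sum the resulting geometric series in $N^{-m/2}$. The paper's version differs only cosmetically (it states the gradient estimate as $|\nabla f(w)| \le \widetilde C\, d(w,\partial D)^{-1}\sup_{u,v\in D}|f(u)-f(v)|$ and cites~\eqref{eq:sec5.2-ball} for the inclusion $B(z,3\varrho/2)\subset B(z_m,2a_m)$).
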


\begin{proof}
Fix $z \in \mathscr{L}_R$ that satisfies the assumption. By Condition~\eqref{sec5.2:condition-b}, for all $1 \leq m \leq \mathsf{m} - 1$, we have
$$
\sup_{u,v \in B(z_m, 3a_m)} |\mathfrak h^{z_m, 4a_m}_{z_{m+1}, 4a_{m+1}}(u) - \mathfrak h^{z_m, 4a_m}_{z_{m+1}, 4a_{m+1}}(v)| \leq N^{m/2} M.
$$
There exists a universal constant $\widetilde C>0$ such that for any harmonic function $f$ on a simply connected domain $D$, we have
$$
|\nabla f(w)| \leq \widetilde C \cdot d(w, \partial D)^{-1} \sup_{u,v \in D} |f(u) - f(v)| \quad \mbox{for all } w \in D.
$$
Recall from~\eqref{eq:sec5.2-ball} that $B(z,3\varrho/2) \subset B(z_m, 2 a_m)$, and hence $d(B(z,3\varrho/2), \partial B(z_m, 3a_m)) \geq a_m$. Applying the above inequality with $f=\mathfrak h^{z_m, 4a_m}_{ z_{m+1}, 4a_{m+1}}$ and $D = B(z_m, 3a_m)$, we obtain that for all $1 \leq m \leq \mathsf{m} - 1$,
$$
\sup_{u \in B(z, 3\varrho/2)} |\nabla \mathfrak h^{z_m, 4a_m}_{z_{m+1}, 4a_{m+1}}(u)| \leq \widetilde C \cdot (a_m)^{-1} N^{m/2} M = \widetilde C \varrho^{-1} N^{-m/2}M .
$$
Similarly, we can deduce from Condition~\eqref{sec5.2:condition-c} that
$$
\sup_{u \in B(z, 3\varrho/2)} |\nabla \mathfrak h^{z_{\mathsf m}, 4a_{\mathsf m}}(u)| \leq \widetilde C \varrho^{-1} N^{-\mathsf m/2}M .
$$
Combining the above two inequalities with Condition~\eqref{sec5.2:condition-a} and~\eqref{eq:gff-decompose} yields
\begin{align*}
    \sup_{u,v \in B(z, 3\varrho/2)} | \mathfrak h^{z, 2\varrho} (u) - \mathfrak h^{z, 2\varrho} (v)| &\leq \sup_{u,v \in B(z, 3\varrho/2)} | \mathfrak h^{z, 2\varrho}_{z_1, 4a_1} (u) - \mathfrak h^{z,2\varrho}_{z_1, 4a_1} (v)| \\
    &+6\varrho \sum_{m=1}^{\mathsf m-1} \sup_{u \in B(z, 3\varrho/2)} |\nabla \mathfrak h^{z_m, 4a_m}_{z_{m+1}, 4a_{m+1}}(u)| + 6\varrho \sup_{u \in B(z, 3\varrho/2)} |\nabla \mathfrak h^{z_m, 4a_m}_{z_{m+1}, 4a_{m+1}}(u)|  \\
    &\leq M + 6 \varrho \sum_{m=1}^{\mathsf m-1} \widetilde C \varrho^{-1} N^{-m/2}M + 6 \varrho \cdot \widetilde C \varrho^{-1} N^{-\mathsf m/2} M \leq C M. \qedhere
\end{align*}
\end{proof}

Next, we use a coarse-graining argument to prove Lemma~\ref{lem:gff-nice}. For $i \geq 1$, a vertex $x \in a_i \mathbb{Z}^2 \cap \mathscr{L}_R$ is called an $a_i$-vertex. We first inductively define the notions of \textbf{nice $a_i$-vertices} and \textbf{bad $a_i$-vertices} for $1 \leq i \leq \mathsf{m}$. An important property, which will be clear from the following definition, is that the nice/bad statuses for two $a_i$-vertices are independent if their $|\cdot|_\infty$-distance is at least $9 a_i$ from each other (recall Definition~\ref{def:harmonic-extension}).

\begin{definition}\label{def:nice-boxes}
    \begin{enumerate}[(1)]
        \item \label{condition-1} A vertex $x \in a_1 \mathbb{Z}^2 \cap \mathscr{L}_R$ is called a \textbf{nice $a_1$-vertex} if for all $y \in \mathbb{Z}^2$ satisfying $B(y, 2\varrho) \subset B(x, 4a_1)$, we have
        $$
        \sup_{u,v \in B(y,3\varrho/2)} |\mathfrak h^{y,2\varrho}_{x,4a_1}(u) - \mathfrak h^{y,2\varrho}_{x,4a_1}(v)| \leq M.
        $$
        Otherwise, it is called a \textbf{bad $a_1$-vertex}.
        \item For $2 \leq j \leq \mathsf{m}$, given the definition of nice and bad $a_{j-1}$-vertices, we say $x \in a_j \mathbb{Z}^2 \cap \mathscr{L}_R$ is a \textbf{nice $a_j$-vertex} if the following condition~\eqref{condition-nice-a} holds and one of the conditions~\eqref{condition-nice-b} or~\eqref{condition-nice-c} holds:
        \begin{enumerate}[(i)]
            \item \label{condition-nice-a} For all $y \in a_{j-1} \mathbb{Z}^2$ satisfying $B(y, 4a_{j-1}) \subset B(x, 4a_j)$, we have
            $$
            \sup_{u,v \in B(y,3a_{j-1})} |\mathfrak h^{y,4a_{j-1}}_{x, 4a_j}(u) - \mathfrak h^{y,4a_{j-1}}_{x, 4a_j}(v)| \leq N^{(j-1)/2} M.
            $$
            \item \label{condition-nice-b} All $a_{j-1}$-vertices in $a_{j-1} \mathbb{Z}^2 \cap B(x, a_j)$ are nice $a_{j-1}$-vertices.
        
            \item \label{condition-nice-c} There are bad $a_{j-1}$-vertices contained in $a_{j-1} \mathbb{Z}^2 \cap B(x, a_j)$, but there exists $z \in a_{j-1} \mathbb{Z}^2 \cap \mathscr{L}_R$ such that for all bad $a_{j-1}$-vertices $y \in B(x, a_j)$, the boxes $B(y, 4a_{j-1})$ are contained in $B(z, 10a_{j-1})$.
        \end{enumerate}
        Otherwise, it is called a \textbf{bad $a_j$-vertex}.
    \end{enumerate}
\end{definition}

The following lemma follows from the estimates in Lemma~\ref{lem:gff-estimate-5.11}.

\begin{lemma}\label{lem:good}
    Fix $N \geq 100$. For all sufficiently large $M$ (which may depend on $N$ and $\varrho$), with probability $1 - o_R(1)$ as $R \to \infty$, all vertices in $a_{\mathsf m} \mathbb{Z}^2 \cap \mathscr{L}_R$ are nice $a_{\mathsf m}$-vertices.
\end{lemma}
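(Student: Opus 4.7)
The plan is to establish, by induction on $j$, the rapid decay of
\[ p_j := \sup_{x \in a_j \mathbb{Z}^2 \cap \mathscr{L}_R} \mathbb{P}[x \text{ is a bad } a_j\text{-vertex}], \]
and then conclude by a union bound. Since $a_{\mathsf{m}} \asymp R$ by~\eqref{eq:bound-N}, the set $a_{\mathsf{m}}\mathbb{Z}^2 \cap \mathscr{L}_R$ contains only $O(N^2)$ vertices (a quantity that does not grow with $R$), so once $p_{\mathsf{m}} = o_R(1)$ is established the lemma follows immediately.

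The central structural observation I will use is a locality principle: for every $x \in a_j\mathbb{Z}^2 \cap \mathscr{L}_R$, the nice $a_j$-vertex status of $x$ is a measurable function of $\mathring{\mathfrak{h}}^{x,4a_j}$ alone. Indeed, Condition~\eqref{condition-nice-a} is written directly in terms of $\mathfrak{h}^{y,4a_{j-1}}_{x,4a_j}$, which by definition is a function of $\mathring{\mathfrak{h}}^{x,4a_j}$. For the recursive Conditions~\eqref{condition-nice-b}--\eqref{condition-nice-c}, whenever $B(y,s) \subset B(x,4a_j)$ a direct computation yields
\[ \mathring{\mathfrak{h}}^{y,s} \;=\; \mathring{\mathfrak{h}}^{x,4a_j} - \widetilde f, \]
where $\widetilde f$ is the harmonic extension to $B(y,s)$ of the boundary values $\mathring{\mathfrak{h}}^{x,4a_j}|_{\partial B(y,s)}$; iterating this identity, every smaller-scale piece appearing in the recursive definition is a function of $\mathring{\mathfrak{h}}^{x,4a_j}$. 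Consequently, when $|x_1 - x_2|_\infty \geq 9a_j$ the balls $B(x_i,4a_j)$ are disjoint, and Lemma~\ref{lem:whole-plane-markov} makes $\mathring{\mathfrak{h}}^{x_1,4a_j}$ and $\mathring{\mathfrak{h}}^{x_2,4a_j}$ independent, whence the nice statuses at $x_1,x_2$ decouple.

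I then derive the recursion for $p_j$ by splitting the bad event. The failure of Condition~\eqref{condition-nice-a} is handled by Lemma~\ref{lem:gff-estimate-5.11}\eqref{lem5.9-claim-2} applied at level $j-1$: for each admissible $y$ the tail is $C\exp(-N^{j-1}M^2/C)$, and union-bounding over the $O(N^2)$ choices of $y$ contributes $CN^2\exp(-N^{j-1}M^2/C)$. The simultaneous failure of Conditions~\eqref{condition-nice-b} and~\eqref{condition-nice-c} forces the existence of two bad $a_{j-1}$-vertices $y_1,y_2 \in B(x,a_j)$ whose boxes $B(y_i,4a_{j-1})$ cannot be covered by any single $B(z,10a_{j-1})$; an elementary geometric check gives $|y_1 - y_2|_\infty > 12a_{j-1}$, well above the threshold $8a_{j-1}$ required for independence at scale $j-1$. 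Hence $\mathbb{P}[y_1,y_2 \text{ both bad}] \leq p_{j-1}^2$, and union-bounding over the $O(N^4)$ such pairs yields
\[ p_j \;\leq\; CN^2 \exp(-N^{j-1}M^2/C) + CN^4 p_{j-1}^2, \qquad p_1 \;\leq\; CN^2 \exp(-M^2/C), \]
the base case being a direct application of Lemma~\ref{lem:gff-estimate-5.11}\eqref{lem5.9-claim-2} to the $a_1$-definition.

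To iterate, set $q_j := CN^4 p_j$, so that $q_j \leq q_{j-1}^2 + \varepsilon_j$ with $\varepsilon_j := C^2 N^6 \exp(-N^{j-1}M^2/C)$ doubly exponentially small in $j$. Choosing $M$ sufficiently large in terms of $N$ and $\varrho$ so that $q_1 \leq 1/4$, a routine induction yields $q_j \to 0$ doubly exponentially fast, hence $p_{\mathsf m} = o_R(1)$. The main obstacle is precisely the locality principle in the second paragraph: without the identity expressing $\mathring{\mathfrak{h}}^{y,s}$ as a function of the larger $\mathring{\mathfrak{h}}^{x,4a_j}$, bad events at distant vertices would retain long-range correlations through the boundary data of $\mathfrak{h}$, and the quadratic recursion would fail to contract; once locality is established, what remains is a standard combination of Borell--TIS Gaussian tail bounds and elementary counting on the lattice.
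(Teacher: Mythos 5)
Your proof follows the same approach as the paper's: define $p_j$ as the maximum probability that an $a_j$-vertex is bad, establish the recursion $p_j \leq CN^2 e^{-N^{j-1}M^2/C} + CN^4 p_{j-1}^2$ via Lemma~\ref{lem:gff-estimate-5.11} and the independence of bad statuses at $a_{j-1}$-vertices whose $4a_{j-1}$-boxes are disjoint, then iterate and union-bound over the $O(N^2)$ vertices in $a_{\mathsf m}\mathbb{Z}^2 \cap \mathscr{L}_R$. Your elaboration of the locality principle (that the nice/bad status of $x$ at scale $a_j$ depends only on $\mathring{\mathfrak h}^{x,4a_j}$) is correct and makes explicit what the paper asserts is ``clear from the definition''; the constant in your geometric separation claim ($>12a_{j-1}$ versus the paper's $\geq 9a_{j-1}$) differs but both are valid and both exceed the $8a_{j-1}$ needed for decoupling, so this has no effect on the argument.
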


\begin{proof}
    The constants $C$ in this proof may change from line to line but depend only on $\varrho$. For $1 \leq j \leq \mathsf m$, let $p_j = p_j(M, R, N, \varrho)$ be the maximum probability that a fixed $a_j$-vertex in $\mathscr{L}_R$ is $a_j$-bad. By Claim~\ref{lem5.9-claim-2} in Lemma~\ref{lem:gff-estimate-5.11}, we have
    $$
    p_1 \leq C a_1^2 e^{-M^2/C} \leq C N^2 e^{-M^2/C}.
    $$
    Next, we estimate $p_j$ in terms of $p_{j-1}$. By Claim~\ref{lem5.9-claim-2} in Lemma~\ref{lem:gff-estimate-5.11}, the probability that Condition~\eqref{condition-nice-a} fails is at most $C (a_j/ a_{j-1})^2 \exp(-N^{j-1} M^2/C)$. Moreover, if neither the Condition~\eqref{condition-nice-b} nor~\eqref{condition-nice-c} holds, then there exist $y,y' \in a_{j-1} \mathbb{Z}^2 \cap B(x,a_j)$ with $|y - y'|_\infty \geq 9 a_{j-1}$ such that $y$ and $y'$ are both bad $a_{j-1}$-vertices. By independence, the probability of this event is at most $C (a_j/ a_{j-1})^4 p_{j-1}^2$. Therefore, 
    $$
    p_j \leq C N^2 e^{-N^{j-1}M^2/C} + C N^4 p_{j-1}^2 \quad \mbox{for } 2 \leq j \leq \mathsf m.
    $$
    It follows that for all sufficiently large $M$ (which may depend on $N$ and $\varrho$), we have $p_j \leq N^{-100j}$ for all $1 \le j \le \mathsf m$. Recall from~\eqref{eq:bound-N} that there are at most $C(R/a_{\mathsf m})^2 \leq CN^2$ many $a_{\mathsf m}$-vertices in $\mathscr{L}_R$. The desired lemma follows from a union bound.
\end{proof}

\begin{lemma}\label{lem:cover}
    Suppose that all vertices $z \in a_{\mathsf m} \mathbb{Z}^2 \cap \mathscr{L}_R$ are nice $a_{\mathsf m}$-vertices. Then, we can find a sequence of vertex subsets $\{\mathcal{A}_m\}_{0 \leq m \leq \mathsf m - 1}$ satisfying:
    \begin{enumerate}[(I)]
        \item $\mathcal{A}_m \subset a_m \mathbb{Z}^2 \cap \mathscr{L}_R$ for $0 \leq m \leq \mathsf m - 1$;\label{lem5.12-condition-1}
        \item For $0 \leq m \leq \mathsf m - 1$, each vertex in $\mathcal{A}_m$ has $|\cdot|_\infty$-distance at least $\frac{1}{10} \varrho N^{m+1}$ away from each other;\label{lem5.12-condition-2}
        \item For any $z \in \mathscr{L}_R$ satisfying that $z \not\in \cup_{m=0}^{\mathsf{m} - 1} \cup_{y \in \mathcal{A}_m} B(y, 10 \varrho N^m)$, the vertex $z$ satisfies Conditions~\eqref{sec5.2:condition-a} and~\eqref{sec5.2:condition-b} before Lemma~\ref{lem:nice-fluct}.\label{lem5.12-condition-3}
    \end{enumerate}
\end{lemma}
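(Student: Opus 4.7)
Plan:

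I plan to construct $\{\mathcal A_m\}_{0\leq m \leq \mathsf m-1}$ inductively from $m=\mathsf m-1$ down to $m=0$, pooling together the ``cluster centers'' of bad $a_m$-vertices supplied by Definition~\ref{def:nice-boxes}(2)(iii). Concretely, for each nice $a_{m+1}$-vertex $x\in a_{m+1}\mathbb Z^2\cap \mathscr L_R$ containing at least one bad $a_m$-vertex in $B(x,a_{m+1})$, that clause furnishes a point $c(x)\in a_m \mathbb Z^2\cap \mathscr L_R$ such that every such bad vertex lies in $B(c(x),6a_m)$. Setting $\widetilde{\mathcal A}_m := \{c(x)\}$, I take $\mathcal A_m \subset \widetilde{\mathcal A}_m$ to be a maximal $\tfrac{1}{10}\varrho N^{m+1}$-separated subset obtained by a greedy procedure. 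Conditions (I) and (II) are then immediate from this construction.

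For (III) I argue the contrapositive. If every $z_j$ with $1\leq j\leq \mathsf m$ is a nice $a_j$-vertex, then Definition~\ref{def:nice-boxes}(1) applied with $y=z$, $x=z_1$, together with Definition~\ref{def:nice-boxes}(2)(i) applied with $y=z_m$, $x=z_{m+1}$, directly yield Conditions (a) and (b); the required inclusions $B(z,2\varrho)\subset B(z_1,4a_1)$ and $B(z_m,4a_m)\subset B(z_{m+1},4a_{m+1})$ follow from $|z-z_1|_\infty<a_1$ and $|z_m-z_{m+1}|_\infty<a_{m+1}$ whenever $N\geq 100$. So it suffices to cover every $z\in\mathscr L_R$ whose ancestor chain contains a bad $a_j$-vertex. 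Given such a $z$, the hypothesis that every $a_{\mathsf m}$-vertex is nice lets me pick the smallest index $i\in(j,\mathsf m]$ with $z_i$ nice; then $z_{i-1}$ is a bad $a_{i-1}$-child of $z_i$, and Definition~\ref{def:nice-boxes}(2)(iii) places $z_{i-1}\in B(c(z_i),6a_{i-1})$. The telescoping bound $|z-z_{i-1}|_\infty\leq \sum_{k=j}^{i-2}a_k\leq \tfrac{N}{N-1}a_{i-1}$ then yields $z\in B(c(z_i),8a_{i-1})\subset B(c(z_i),10a_{i-1})$, so $z$ is covered whenever $c(z_i)\in\mathcal A_{i-1}$.

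The main obstacle is the case in which the greedy step discards $c(z_i)$: then there exists $c^*\in\mathcal A_{i-1}$ with $|c^*-c(z_i)|_\infty<\tfrac{1}{10}\varrho N^{i}=\tfrac{N}{10}a_{i-1}$, and for $N\geq 100$ this exceeds the covering radius $10a_{i-1}$, so $B(c^*,10a_{i-1})$ need not contain $z$. My resolution is to couple the top-down construction across scales: each rejection at level $m$ is compensated by adjoining to $\mathcal A_{m+1}$ an $a_{m+1}$-vertex $c^{**}$ closest to the dominating $c^*$, so that $z\in B(c^{**},10a_i)$ by a triangle-inequality computation. The compatibility of these augmentations with (II) at scale $m+1$ relies on the geometric observation that two colliding candidates at level $m$ must come from $a_{m+1}$-parents within a dimensional constant multiple of $a_{m+1}$ of each other, so that only a bounded number of augmentations can accumulate near any fixed $a_{m+1}$-vertex. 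The bulk of the technical work consists of executing this cross-scale bookkeeping while keeping the constants consistent throughout the induction.
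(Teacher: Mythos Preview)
Your plan has a genuine gap in the cross-scale augmentation step. Greedy thinning of $\widetilde{\mathcal A}_m$ destroys the covering property, and your proposed fix---adjoining an extra $a_{m+1}$-vertex $c^{**}$ to $\mathcal{A}_{m+1}$ for each rejection at level $m$---does not preserve (II) at level $m+1$: a single augmented $c^{**}$ landing within $\tfrac{1}{10}a_{m+2}$ of an existing member of $\mathcal A_{m+1}$ already violates it, regardless of whether augmentations ``accumulate'' in bounded number. Restoring (II) at level $m+1$ would force further thinning there, producing new uncovered points and an upward cascade with no evident termination mechanism. Since you build the $\mathcal A_m$ top-down but the repairs propagate upward, there is a circularity you have not resolved.

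The idea you are missing is that one should not try to cover all bad $a_m$-vertices at level $m$. The paper sets $T_{m+1}$ to be the bad $a_{m+1}$-vertices, defines $S_m$ as those bad $a_m$-vertices lying outside $\cup_{x \in T_{m+1}} B(x, a_{m+1})$, and takes $\mathcal{A}_m$ to be a \emph{minimal-cardinality} subset of $a_m\mathbb{Z}^2 \cap \mathscr{L}_R$ with $\cup_{y \in S_m} B(y, 4a_m) \subset \cup_{z \in \mathcal{A}_m} B(z, 10a_m)$. Separation (II) is then automatic by minimality: if $z_1, z_2 \in \mathcal{A}_m$ were within $\tfrac{1}{10}a_{m+1}$, all points of $S_m$ they cover would lie in $B(w, a_{m+1})$ for a single $a_{m+1}$-vertex $w$; since those points belong to $S_m$, the vertex $w$ must be nice, and condition (iii) of Definition~\ref{def:nice-boxes} furnishes one center replacing both $z_1$ and $z_2$, contradicting minimality. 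With this construction the scales decouple completely and no cross-scale bookkeeping is needed; condition (III) follows by chasing any bad $z_j$ upward through the $S_m$/$T_{m+1}$ dichotomy until the chain terminates (necessarily by level $\mathsf m-1$, since $T_{\mathsf m}=\emptyset$).
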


\begin{proof}
    For $0 \leq m \leq \mathsf m - 1$, let $T_{m+1}$ be the set of all bad $a_{m+1}$-vertices in $a_{m+1} \mathbb{Z}^2 \cap \mathscr{L}_R$. Note that $T_{\mathsf m} = \emptyset$. For $0 \leq m \leq \mathsf m - 1$, we define $S_m$ as the set of bad $a_m$-vertices contained in $\mathscr{L}_R \setminus (\cup_{x \in T_{m+1}} B(x,a_{m+1}))$.
    We claim that there exists $\mathcal{A}_m$ satisfying Conditions~\eqref{lem5.12-condition-1} and~\eqref{lem5.12-condition-2} for this $m$, such that
    \begin{equation}\label{eq:lem5.10-cover}
        \cup_{y \in S_m} B(y,4a_m) \mbox{ is covered by } \cup_{z \in \mathcal A_m} B(z,10a_m).
    \end{equation} 
    This claim follows essentially from Conditions~\eqref{condition-nice-b} and~\eqref{condition-nice-c} in Definition~\ref{def:nice-boxes}. To see this, let $\mathcal A_m \subset a_m \mathbb Z^2 \cap \mathscr L_R$ be a subset of minimal cardinality satisfying~\eqref{eq:lem5.10-cover} (which clearly exists). Suppose for contradiction that Condition~\eqref{lem5.12-condition-2} fails, i.e., there exist $z_1,z_2 \in \mathcal A_m$ such that $|z_1-z_2|_\infty < \frac{1}{10} a_{m+1}$. For $i=1,2$, define $Y_i$ as the set of vertices $y \in S_m$ for which $B(y,4a_m) \cap B(z_i,10a_m) \neq \emptyset$. Then, for any $y_1 \in Y_1$ and $y_2 \in Y_2$, we have
    \[ |y_1-y_2|_\infty \le |y_1-z_1|_\infty+|z_1-z_2|_\infty+|z_2-y_2|_\infty \le 14a_m+\tfrac{1}{10} a_{m+1}+14a_m < \tfrac{1}{2} a_{m+1}. \]
    Hence, there exists $w \in a_{m+1} \mathbb Z^2 \cap \mathscr L_R$ such that $B(w,a_{m+1})$ covers $Y_1 \cup Y_2$. By definition of $S_m$, we have $w \not \in T_{m+1}$, and thus $w$ must be a nice $a_{m+1}$-vertex. Conditions~\eqref{condition-nice-b} and~\eqref{condition-nice-c} then imply that all boxes $B(y, 4 a_m)$ with $y \in Y_1 \cup Y_2$ are covered by $B(z,10 a_m)$ for some $z \in a_m \mathbb{Z}^2 \cap \mathscr{L}_R$. Replacing $\{z_1,z_2\}$ with this $z$ yields another $\mathcal A_m'$ satisfying~\eqref{eq:lem5.10-cover} with smaller cardinality, contradicting the minimality of $|\mathcal A_m|$.

    Finally, we verify that $\{\mathcal A_m\}_{0 \le m \le \mathsf m-1}$ satisfies Condition~\eqref{lem5.12-condition-3}. Let $z \notin \cup_{m=0}^{\mathsf{m} - 1} \cup_{y \in \mathcal{A}_m} B(y, 10 \varrho N^m)$. Recalling the definition of $z_m$, we see that $z_m$ is a nice $a_m$-vertex for all $1 \leq m \leq \mathsf m$. Condition~\eqref{sec5.2:condition-a} before Lemma~\ref{lem:nice-fluct} then follows immediately from Condition~\eqref{condition-1} in Definition~\ref{def:nice-boxes}, combined with~\eqref{eq:sec5.2-ball}. Similarly, Condition~\eqref{sec5.2:condition-b} before Lemma~\ref{lem:nice-fluct} follows from Condition~\eqref{condition-nice-a} in Definition~\ref{def:nice-boxes}.
\end{proof}

We are now ready to prove Lemma~\ref{lem:gff-nice}.

\begin{proof}[Proof of Lemma~\ref{lem:gff-nice}]
    First, by Claim~\ref{lem5.9-claim-1} in Lemma~\ref{lem:gff-estimate-5.11} and a union bound, Condition~\eqref{sec5.2:condition-c} (stated before Lemma~\ref{lem:nice-fluct}) holds simultaneously for all possible $z_{\mathsf m} \in a_{\mathsf m} \mathbb{Z}^2 \cap \mathscr{L}_R$ with probability $1 - o_R(1)$ as $R \to \infty$. Next, Lemma~\ref{lem:good} implies for all sufficiently large $M$ (which may depend on $N$ and $\varrho$), with probability $1-o_R(1)$, we can construct a sequence of vertex sets $\{\mathcal{A}_m\}_{0 \leq m \leq \mathsf m-1}$ as specified in Lemma~\ref{lem:cover}. Finally, Lemma~\ref{lem:nice-fluct} guarantees that this collection satisfies Conditions~\ref{lem5.4-condition1},~\ref{lem5.4-condition2},~\ref{lem5.4-condition3} as required in Lemma~\ref{lem:gff-nice}, after replacing $M$ with $CM$.
\end{proof}

\subsection{Counting paths in a fractal graph: Proof of Lemma~\ref{lem:path}}\label{subsec:count}

All constants $C$ in this subsection may vary from line to line, but depend only on $\varrho$. We assume that $N \geq 10^{10} \varrho $ and fix an arbitrary sequence of vertex sets $\{\mathcal{A}_m\}_{m \geq 0}$ on $\mathbb{Z}^2$ that satisfies Conditions~\ref{condition:fractal-1} and~\ref{condition:fractal-2}.

\begin{definition}[weight]
    Let $\beta>0$ be a constant to be chosen later. Recalling the definition of self-avoiding connected paths and the collection $\{\mathcal{S}_m\}_{m \geq 0}$ from Definition~\ref{def:*-connect}, we assign a weight $w(a) = \beta N^{-8m}$ to each box $a \in \mathcal{S}_m$ for $m \ge 0$. For any path $P$, its weight $w(P)$ is defined as the product of the weights of all boxes along the path.
\end{definition}

\begin{lemma}\label{lem:weight}
    There exists a constant $C = C(\varrho)>0$ such that for any $\beta \leq C^{-1}$, $N \geq C$, and any box $a \in \cup_{m=0}^\infty \mathcal{S}_m$, we have
    $$
    \sum_{P : a \to \bullet} w(P) \leq 1,
    $$
    where the sum is taken over all self-avoiding connected paths starting from $a$.
\end{lemma}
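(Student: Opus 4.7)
The plan is to control $F(a) := w(a)^{-1}\sum_{P : a \to \bullet} w(P)$ via the scale-dependent a priori bound $F(a) \leq 2\,N^{2m(a)}$, where $m(a)$ denotes the scale of $a$ (so $a \in \mathcal{S}_{m(a)}$ and $w(a) = \beta N^{-8m(a)}$). Once this is in hand, multiplying by $w(a)$ gives $\sum_P w(P) \leq 2\beta\,N^{-6m(a)} \leq 2\beta$, which is at most $1$ as soon as $\beta \leq 1/2$.

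The first step is a purely geometric count. For $a \in \mathcal{S}_m$ and $k \geq 0$, let $N_{m,k}(a)$ denote the number of boxes $b \in \mathcal{S}_k$ with $d(a,b) \leq 40\varrho$. Using Conditions~\ref{condition:fractal-1} and~\ref{condition:fractal-2}, together with $N$ large enough that the enforced spacing $\tfrac{1}{10}\varrho N^k$ among the centers in $\mathcal{A}_{k-1}$ exceeds the relevant $40\varrho$-neighborhoods at the same or larger scales, one obtains
\begin{equation*}
N_{m,0}(a) \leq C(\varrho)\,N^{2(m-1)\vee 0}, \qquad
N_{m,k}(a) \leq C(\varrho)\bigl(1 + N^{2(m-1-k)\vee 0}\bigr) \text{ for } k \geq 1.
\end{equation*}
The dominant regime is $k = 0$ with $m$ large: a scale-$m$ box has $\sim N^{2(m-1)}$ unit-box neighbors simply because its area is $O(N^{2(m-1)})$, while at $k \geq 1$ the spacing kicks in and shrinks the count.

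Next I would set up the recursion. Partitioning each path according to its second box and dropping the self-avoidance constraint on the tail (which only enlarges the sum), the truncations $F^{(L)}(a) := w(a)^{-1}\sum_{P : a \to \bullet,\, \operatorname{len}(P) \leq L} w(P)$ satisfy $F^{(1)}(a) = 1$ and
\begin{equation*}
F^{(L+1)}(a) \leq 1 + \sum_{b \ne a,\, d(a,b) \leq 40\varrho} w(b)\,F^{(L)}(b).
\end{equation*}
I would then prove $F^{(L)}(a) \leq 2\,N^{2m(a)}$ by induction on $L$. Plugging the ansatz and the counts from Step~1 into the right-hand side gives, for $a \in \mathcal{S}_m$ with $m \geq 1$,
\begin{equation*}
\sum_b w(b) \cdot 2\,N^{2m(b)} = 2\beta \sum_{k \geq 0} N_{m,k}(a)\,N^{-6k} \leq 2\beta\,C(\varrho)\,N^{2(m-1)},
\end{equation*}
where the crucial cancellation is that the weight decay $N^{-8k}$ combined with the ansatz growth $N^{2k}$ leaves a net factor $N^{-6k}$, which beats the neighbor-count ratio $N_{m,k}/N_{m,0} \approx N^{-2k}$ at each $k \geq 1$; hence the sum is dominated by the $k=0$ term. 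Choosing $\beta \leq C(\varrho)^{-1}$ and $N \geq C(\varrho)$ makes the right-hand side at most $N^{2m}$, so $F^{(L+1)}(a) \leq 1 + N^{2m} \leq 2\,N^{2m}$, closing the induction. The case $m = 0$ is handled identically via $\sum_k N_{0,k}(a)\,N^{-6k} \leq C(\varrho)$.

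The main obstacle is recognizing the correct scale-dependent ansatz: a naive uniform bound $F(a) \leq \text{const}$ cannot close, because a single scale-$m$ box already has $\sim N^{2(m-1)}$ unit-box neighbors and so the first step alone produces unbounded total weight across scales. The exponent $8$ in $w(a) = \beta N^{-8m}$ is calibrated precisely for this balance: the ansatz $F(a) \leq 2\,N^{2m}$ is the minimal polynomial growth compatible with recursive closure, and it still leaves the net weight $w(a)\,F(a) \leq 2\beta\,N^{-6m(a)}$ decaying in $m(a)$, so that $\beta$ can finally be chosen independently of $N$ and $m$ to push the total sum below $1$.
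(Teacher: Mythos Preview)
Your proof is correct but takes a different route from the paper's. The paper exploits the symmetric quantity $\sqrt{w(a)w(b)}$: it shows directly that
\[
\sum_{b:\,d(a,b)\le 40\varrho}\sqrt{w(a)\,w(b)}\;\le\;\tfrac12
\]
uniformly in $a$, by the same neighbor counts you use. Writing $w(P)=\sqrt{w(x_1)}\prod_{i=1}^{L-1}\sqrt{w(x_i)w(x_{i+1})}\,\sqrt{w(x_L)}$ and summing from the last vertex backwards then gives $\sum_{P:a\to\bullet}w(P)\le 2\sqrt{w(a)}\le 1$ in one stroke. The square-root trick absorbs the asymmetry between the scales of $a$ and $b$ and thereby sidesteps the need for any scale-dependent bookkeeping.

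Your approach instead tracks the asymmetry explicitly via the ansatz $F(a)\le 2N^{2m(a)}$, which you correctly identify as the minimal growth rate that closes the recursion given the $\sim N^{2(m-1)}$ unit-box neighbors of a scale-$m$ box. This is a perfectly sound alternative; it is a bit more laborious but has the advantage of making the role of the exponent $8$ in the weight completely transparent. Both arguments rely on the same geometric input (Conditions~\ref{condition:fractal-1} and~\ref{condition:fractal-2}) and the same crude neighbor counts, so neither is more general than the other in this setting.
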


\begin{proof}
    For any $a \in \cup_{m=0}^\infty \mathcal{S}_m$, we first show that
    \begin{equation}\label{eq:weight<1}
        \sum_{ \substack{b \in \cup_{m=0}^\infty \mathcal S_m,\, d(a,b) \leq 40 \varrho}} \sqrt{w(a) w(b)} \leq 1/2.
    \end{equation}
    Assume that $a \in \mathcal{S}_l$. For each $0 \leq j \leq l$, there are at most $(200 \varrho N^l)^2$ possible choices of $b \in \mathcal{S}_j$ satisfying $d(a,b) \leq 40 \varrho$. On the other hand, for each $j > l$, Condition~\ref{condition:fractal-2} implies that there is at most one $b \in \mathcal{S}_j$ with $d(a,b) \leq 40 \varrho$. The left-hand side of~\eqref{eq:weight<1} is therefore bounded above by
    $$
    \beta \Big(\sum_{j=0}^l N^{-4l} \times N^{-4j} \times (200 \varrho N^l)^2 +  \sum_{j=l+1}^\infty N^{-4l} \times N^{-4j} \Big).
    $$
    For sufficiently small $\beta$ and sufficiently large $N$, inequality~\eqref{eq:weight<1} holds. The lemma follows from~\eqref{eq:weight<1} combined with the fact that $\sqrt{w(a)} \leq 1/2$.
\end{proof}

\begin{lemma}\label{lem:length-bound}
    There exists a constant $C = C(\varrho) \geq 100$ such that for all $N \geq 100 C$ and $k \geq 0$, all self-avoiding connected paths at level $k$ with $|\cdot|_\infty$-diameter $D \geq N^k$ have length at least $\frac{1}{C}(1 - \frac{C}{N})^kD$.
\end{lemma}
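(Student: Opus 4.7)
The plan is to prove the lemma by induction on $k$. For the base case $k = 0$, every box in $\mathcal S_0$ has diameter $2$ and two consecutive boxes lie within $|\cdot|_\infty$-distance $40\varrho$, so a level-$0$ path of length $L$ has diameter at most $2L + 40\varrho(L-1) \le 42\varrho L$. Choosing $C \ge 42\varrho$ handles this case.

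For the inductive step, the crucial geometric observation is that no two boxes in $\mathcal S_k$ can be consecutive in any connected level-$k$ path: by Condition~\ref{condition:fractal-2} the centers of two boxes in $\mathcal S_k$ are separated by at least $\tfrac{1}{10}\varrho N^k$, hence their $d$-distance is at least $\tfrac{1}{10}\varrho N^k - 20\varrho N^{k-1} > 40\varrho$ for $N$ large enough. Let $J$ denote the number of $\mathcal S_k$-boxes in a given level-$k$ path $P$ with $\mathrm{diam}(P) = D \ge N^k$; label them $a_1, \ldots, a_J$ in the order of appearance, and decompose $P = Q_0 \cup \{a_1\} \cup Q_1 \cup \cdots \cup \{a_J\} \cup Q_J$, where each $Q_i$ is a (possibly empty) level-$(k-1)$ subpath. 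If $J = 0$, then $P$ itself is a level-$(k-1)$ path of diameter $\ge N^k \ge N^{k-1}$ and the inductive hypothesis applies with room to spare, since $(1-C/N)^{k-1} \ge (1-C/N)^k$.

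If $J \ge 2$, I would use for each middle subpath $Q_i$ with $1 \le i \le J-1$ the forced lower bound
\[ \mathrm{diam}(Q_i) \ge d(a_i, a_{i+1}) - 80\varrho \ge \tfrac{\varrho N^k}{20}, \]
which exceeds $N^{k-1}$ and lets the inductive hypothesis yield $\mathrm{len}(Q_i) \ge \tfrac{\varrho N^k(1-C/N)^{k-1}}{20C}$. For $Q_0$ and $Q_J$ the same inductive bound applies whenever their diameter is at least $N^{k-1}$, with the two remaining cases contributing an error of at most $2 N^{k-1}$. Plugging these into the triangle-type bound $D \le \sum_i \mathrm{diam}(Q_i) + 20\varrho J N^{k-1} + 80\varrho J$ and using the algebraic identity $\tfrac{1}{(1-C/N)^k} - \tfrac{1}{(1-C/N)^{k-1}} = \tfrac{C/N}{(1-C/N)^k}$, the target inequality $D \le \tfrac{CL}{(1-C/N)^k}$ reduces to a computation: the $\varrho J N^{k-1}$-scale overhead from the big boxes must be absorbed by the $\tfrac{C}{N}$-improvement applied to $L_{\mathrm{mid}} \ge \tfrac{(J-1)\varrho N^k(1-C/N)^{k-1}}{20 C}$, which works out once $C$ is chosen large enough of order $\varrho$. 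The exceptional case $J = 1$ is handled separately: since $D \ge N^k$ vastly exceeds $20\varrho N^{k-1}$, at least one of $Q_0, Q_J$ has diameter at least $(D - 22\varrho N^{k-1})/2 \ge N^{k-1}$, and the inductive hypothesis applied to that single subpath already gives the required bound.

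The main obstacle will be the precise bookkeeping required to preserve the exact multiplicative factor $(1-C/N)^k$ through the induction: the $\varrho J N^{k-1}$-scale overhead from the $\mathcal S_k$-boxes must be absorbed precisely by the $\tfrac{C}{N}$ improvement supplied at each step, which is what forces $C$ to be taken large relative to $\varrho$ rather than merely bounded.
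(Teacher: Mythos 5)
Your overall strategy — induction on $k$, decompose the level-$k$ path at its $\mathcal S_k$-boxes into level-$(k-1)$ subpaths, apply the triangle inequality, and use the mandatory large diameter of interior subpaths to absorb the error — is exactly the paper's argument, and your base case and $J\ge 2$ treatment are fine (modulo bookkeeping you acknowledge leaving implicit). However, your handling of $J=1$ has a real gap. You argue that at least one of $Q_0,Q_1$ has diameter at least $(D-22\varrho N^{k-1})/2$ and that ``the inductive hypothesis applied to that single subpath already gives the required bound.'' It does not: applying the inductive hypothesis to a single subpath of diameter $\approx D/2$ produces a length of roughly $\frac{1}{C}\bigl(1-\frac{C}{N}\bigr)^{k-1}\cdot\frac{D}{2}$, and to conclude $\mathrm{len}(P) \ge \frac{1}{C}\bigl(1-\frac{C}{N}\bigr)^k D$ you would need $\frac{1}{2}\bigl(1-\frac{22\varrho}{N}\bigr) \ge 1-\frac{C}{N}$, i.e.\ $2C - 22\varrho \ge N$. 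Since you also impose $N \ge 100C$, this is impossible. The factor-of-two deficit cannot be bought back by the $(1-C/N)$ slack, which is tiny by design.

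The correct treatment of $J=1$ (and the one the paper uses, implicitly, without a separate case split in the final estimate) is to sum the contributions of \emph{both} boundary subpaths, using the same ``error-term'' device you already invoke for $J\ge 2$: for each of $Q_0,Q_1$, either it has diameter $\ge N^{k-1}$ and the inductive hypothesis gives $\mathrm{len}(Q_i)\ge A_{k-1}\,\mathrm{diam}(Q_i)$ with $A_{k-1}=\frac{1}{C}(1-\frac{C}{N})^{k-1}$, or its diameter is $< N^{k-1}$ and $\mathrm{len}(Q_i)+A_{k-1}N^{k-1}\ge A_{k-1}\,\mathrm{diam}(Q_i)$ trivially. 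Summing and using the triangle bound then yields $L \ge A_{k-1}\bigl(D - (22\varrho+2)N^{k-1}\bigr)$, and the target $L\ge A_{k-1}(1-C/N)D$ reduces to $C \ge 22\varrho + 2$, which is entirely compatible with your choice of $C$. In fact, once this is done, the $J=0,1,\ge 2$ cases can all be folded into the single bound (this is what the paper does: the only genuine case split occurs in verifying that the total error $M(80\varrho + 20\varrho N^k)$ is controlled by $\frac{C}{2N}\sum\mathrm{diam}(P_i)$, and thereafter all $M\ge 0$ are treated uniformly via $L \ge (1+\frac{C}{2N})^{-1}A\,D - 2AN^k$).
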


\begin{proof}
    We set $C = 10000 \varrho$ and prove the lemma by induction on $k$. The base case $k = 0$ holds trivially. For the inductive step, assume the statement holds for some $k \ge 0$. To prove the case $k+1$, consider any self-avoiding connected path $(x_1, x_2, \ldots, x_L)$ at level $(k+1)$ with diameter $D \geq N^{k+1}$. We divide it into sub-paths using the boxes from $\mathcal{S}_{k+1}$:
    $$ P_0, y_1, P_1, y_2, \ldots, y_M, P_M, $$
    where $y_1,\ldots,y_M \in \mathcal S_{k+1}$ and $P_0,\ldots,P_M$ are self-avoiding connected paths at level $k$ (with $P_0$ and $P_M$ possibly empty). Then,
    \begin{equation}\label{eq:lem5.13-1}
        {\rm diam}(P_0) + (2 \times 40\varrho + 20\varrho N^k) + {\rm diam}(P_1) + (2 \times 40\varrho + 20\varrho N^k) + \cdots +{\rm diam}(P_M) \geq D.
    \end{equation}
    We claim that for $N \geq 100 C$, 
    \begin{equation}\label{eq:lem5.13-2}
        \tfrac{C}{2N} ({\rm diam}(P_0) + {\rm diam}(P_1) + \cdots +  {\rm diam}(P_M)) \geq M (2 \times 40\varrho + 20\varrho N^k).
    \end{equation}
    We separate into two cases $M = 1$ and $M \geq 2$ (the case $M = 0$ is trivial). If $M \geq 2$, for each $1 \leq i \leq M-1$, since the $|\cdot|_\infty$-distance between the centers of $y_i$ and $y_{i+1}$ is at least $\frac{1}{10} \varrho N^{k+1}$ (Condition~\ref{condition:fractal-2}), we have ${\rm diam}(P_i) \geq \frac{1}{10} \varrho N^{k+1} - 20\varrho N^k - 2 \times 40 \varrho\geq \frac{1}{20} \varrho N^{k+1}$. This implies that
    $$
    \tfrac{C}{2N} ({\rm diam}(P_0) + {\rm diam}(P_1) + \cdots +  {\rm diam}(P_M)) \geq \tfrac{C}{2N} (M-1) \tfrac{1}{20} \varrho N^{k+1} \geq 100 M\varrho N^k \geq M(2 \times 40\varrho + 20\varrho N^k).
    $$
    If $M = 1$, we have $\min \{ {\rm diam}(P_0), {\rm diam}(P_M) \} \geq \frac{1}{3} N^{k+1}$, which leads to
    $$
    \tfrac{C}{2N} ({\rm diam}(P_0) + {\rm diam}(P_1) + \cdots +  {\rm diam}(P_M)) \geq \tfrac{C}{2N} \times \tfrac{1}{3} N^{k+1} \geq 2 \times 40\varrho + 20\varrho N^k.
    $$
    Combining both cases establishes Claim~\eqref{eq:lem5.13-2}.
    
    Let $A = \frac{1}{C} (1 - \frac{C}{N})^k$. For $1 \leq i \leq M-1$, recall that ${\rm diam}(P_i) \geq \frac{1}{20} \varrho N^{k+1} \geq N^k$. By the induction hypothesis, ${\rm len}(P_i) \geq A \cdot {\rm diam}(P_i)$ holds for all $1 \leq i \leq M - 1$. Similarly, we can also show that ${\rm len}(P_0) + A N^k \geq A \cdot {\rm diam}(P_0)$ and ${\rm len}(P_M) + A N^k \geq A \cdot {\rm diam}(P_M)$. Combining these estimates with~\eqref{eq:lem5.13-1} and~\eqref{eq:lem5.13-2} yields
    \begin{align*}
        AD &\leq A (1 + \tfrac{C}{2N}) ({\rm diam}(P_0) + {\rm diam}(P_1) + \cdots +  {\rm diam}(P_M)) \\
        &\leq (1 + \tfrac{C}{2N}) (2A N^k + {\rm len}(P_0) + {\rm len}(P_1) + \cdots +  {\rm len}(P_M)).
    \end{align*}
    Therefore, the total length $L$ satisfies
    \[ L \geq \sum_{i=0}^M {\rm len}(P_i) \geq (1 + \tfrac{C}{2N})^{-1} AD - 2A N^k. \]
    Since $C \geq 100$ and $N \geq 100C$, this yields $L \ge (1 - \frac{C}{N}) A D$ whenever $D \geq N^{k+1}$, thereby completing the induction.
\end{proof}

Next, we upper-bound the fraction of boxes from $\mathcal{S}_m$ in a self-avoiding connected path.

\begin{lemma}\label{lem:box-bound}
     There exists a constant $C = C(\varrho) \geq 100$ such that for all $N \geq 100 C$ and $k \geq m \geq 1$, the following holds. Every self-avoiding connected path at level $k$ with length $L \geq 1$ contains at most $2^{k-m} + \frac{C}{(N - C)^m} L$ boxes from $\mathcal{S}_m$.
\end{lemma}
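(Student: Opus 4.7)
The proof is by induction on $k \ge m \ge 1$. Write $b_m = C/(N-C)^m$ for the target density constant, where $C = C(\varrho)$ will be chosen sufficiently large depending on the constant $C_0 = C_0(\varrho)$ appearing in Lemma~\ref{lem:length-bound}.

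\emph{Base case $k = m$.} List the $\mathcal{S}_m$ boxes of $P$ in order as $y_1,\ldots,y_M$ with intervening sub-paths $P_0,\ldots,P_M$ at level $m-1$. If $M \le 1$ the bound is trivial. Otherwise, Condition~\ref{condition:fractal-2} forces the centers of consecutive $y_i, y_{i+1}$ to be at least $\tfrac{1}{10}\varrho N^m$ apart; combining this with the box diameter $20\varrho N^{m-1}$ and the step bound $40\varrho$ yields $\mathrm{diam}(P_i) \ge c\varrho N^m \ge N^{m-1}$ for each interior $i$ and $N$ large. Lemma~\ref{lem:length-bound} at level $m-1$ then gives $\mathrm{len}(P_i) \ge B_m := c'\varrho N(N-C_0)^{m-1}$, so $L \ge (M-1)B_m$ and $M \le 1 + L/B_m \le 1 + C_1' L/(N-C)^m$ for some $C_1' = C_1'(\varrho)$. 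Provided $C \ge C_1'$, this is the advertised bound.

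\emph{Inductive step.} Decompose $P$ by its $\mathcal{S}_k$ boxes as $P_0, y_1, P_1, \ldots, y_M, P_M$, with each $P_i$ a level-$(k-1)$ path of length $L_i$. The case $m = k$ is identical to the base case argument applied at level $k$. For $m < k$ and $M \le 1$, the induction hypothesis applied to the at most two non-empty $P_i$'s yields
\[
\#\mathcal{S}_m(P) \;\le\; (M+1)\cdot 2^{k-1-m} + b_m(L-M) \;\le\; 2\cdot 2^{k-1-m} + b_m L \;=\; 2^{k-m} + b_m L.
\]
For $M \ge 2$, Lemma~\ref{lem:length-bound} applied to each interior $P_i$ ($1 \le i \le M-1$), whose diameter exceeds $c\varrho N^k$, gives $L_i \ge B_k := c'\varrho N(N-C_0)^{k-1}$ and hence $L \ge (M-1)B_k$. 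The induction hypothesis then yields
\[
\#\mathcal{S}_m(P) \;\le\; (M+1)\cdot 2^{k-1-m} + b_m(L-M) \;\le\; 2^{k-m} + \tfrac{2^{k-1-m}}{B_k}\,L + b_m L.
\]

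The main obstacle is the residual term $\tfrac{2^{k-1-m}}{B_k}\,L$, which is strictly positive at every level and prevents naive induction from closing with a $k$-independent constant. My plan is to resolve this exactly as in the constant-absorption step of Lemma~\ref{lem:length-bound}: run the induction with level-indexed densities $b_m^{(k)}$ defined by $b_m^{(m)} = C_1'/(N-C)^m$ and $b_m^{(k)} = b_m^{(k-1)} + 2^{k-1-m}/B_k$, so that the above estimate gives $\#\mathcal{S}_m(P) \le 2^{k-m} + b_m^{(k)} L$. Since
\[
\tfrac{2^{k-1-m}}{B_k} \;=\; \tfrac{1}{c'\varrho N(N-C_0)^m}\,\Bigl(\tfrac{2}{N-C_0}\Bigr)^{k-1-m},
\]
the increments form a geometric series with ratio $2/(N-C_0) < 1/2$ for $N$ large, hence $b_m^{(k)} \le b_m^{(\infty)} \le C''/(N-C)^m$ uniformly in $k$ for some $C'' = C''(\varrho)$. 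Choosing $C = \max(C_1', C'')$ then recovers the stated bound $\#\mathcal{S}_m(P) \le 2^{k-m} + \tfrac{C}{(N-C)^m} L$ at every level $k \ge m$.
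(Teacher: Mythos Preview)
Your proof is correct, but it takes a genuinely different route from the paper's. You induct on $k$ (the level of the path) with $m$ fixed: decompose $P$ by its $\mathcal{S}_k$ boxes, apply the induction hypothesis to the level-$(k-1)$ sub-paths, and absorb the resulting error term via the auxiliary sequence $b_m^{(k)}$ whose increments are summable. The paper instead fixes $k$, introduces $a_i = \#\mathcal{S}_i(P)$ for all $1 \le i \le k$ at once, and for each $m$ decomposes $P$ by its $\mathcal{S}_m$ boxes (not its $\mathcal{S}_k$ boxes). The key observation is that among the interior sub-paths $P_1,\ldots,P_{M-1}$, at most $\sum_{i>m} a_i$ of them can contain any box from $\cup_{i>m}\mathcal{S}_i$; the rest are genuine level-$(m-1)$ paths to which Lemma~\ref{lem:length-bound} applies directly. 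This yields the single recursion $a_m \le 1 + 20\widetilde C (N-\widetilde C)^{-m} L + \sum_{i=m+1}^k a_i$, which unrolls by downward induction on $m$ to $a_m \le 2^{k-m} + \sum_{i=m}^k 20\widetilde C (N-\widetilde C)^{-i} 2^{i-m} L$, and the geometric sum gives the stated bound immediately. The paper's approach is a bit cleaner in that no running constants $b_m^{(k)}$ are needed; yours has the mild advantage of reusing exactly the decomposition pattern of Lemma~\ref{lem:length-bound}. One small remark on your write-up: the claim $1/B_m \le C_1'/(N-C)^m$ with $C_1'$ independent of $C$ tacitly requires $C \ge C_0$, so you should impose that constraint first and then take $C = \max(C_0, 3C_1', 100)$ at the end to avoid the apparent circularity.
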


\begin{proof}
    Let $\widetilde C$ be the constant from Lemma~\ref{lem:length-bound}. Fix $k \geq 1$ and consider a self-avoiding connected path $P$ at level $k$ with length $L \geq 1$. For $1 \leq i \leq k$, let $a_i$ be the number of boxes from $\mathcal{S}_i$ in $P$.
    
    For fixed $1 \leq m \leq k$, we divide $P$ into sub-paths using the boxes from $\mathcal{S}_m$:
    $$ P_0, y_1, P_1, y_2, \ldots, y_M, P_M, $$
    where $y_1,\ldots,y_M \in \mathcal{S}_m$. For each $1 \leq j \leq M-1$, if $P_j$ contains no box from $\cup_{i=m+1}^k \mathcal{S}_i$, then $P_j$ is a path at level $(m-1)$. For such path $P_j$, Condition~\ref{condition:fractal-2} implies that ${\rm diam}(P_j) \geq \frac{1}{10} \varrho N^m - 20  \varrho  N^{m-1} - 2 \times 40  \varrho  \geq \frac{1}{20}  \varrho  N^m$. Using Lemma~\ref{lem:length-bound}, we deduce ${\rm len}(P_j) \geq \frac{1}{\widetilde C} ( 1 - \frac{\widetilde C}{N})^{m-1} \frac{1}{20} \varrho N^m \geq \frac{1}{20 \widetilde C} ( N - \widetilde C)^m$. In addition, the number of paths $P_j$ that may contain boxes from $\cup_{i=m+1}^k \mathcal{S}_i$ is at most $\sum_{i = m+1}^k a_i$. Combining these, $L \ge \sum_{j=1}^{M-1} {\rm len}(P_j) \ge (M-1-\sum_{i=1}^{m+1}a_i) \cdot \tfrac{1}{20 \widetilde C} ( N - \widetilde C)^m$, which implies the recursive inequality
    \[ a_m = M \leq 1 + 20 \widetilde C ( N - \widetilde C)^{-m} L + \sum_{i=m+1}^k a_i. \]
    By induction, we derive
    \[ a_m \leq 2^{k-m} + \sum_{i=m}^k 20 \widetilde C ( N - \widetilde C)^{-i} 2^{i-m} L \leq 2^{k-m} + \frac{C}{(N-C)^m} L. \qedhere \]
\end{proof}

Finally, we present the proof of Lemma~\ref{lem:path}.

\begin{proof}[Proof of Lemma~\ref{lem:path}]
    Claim~\ref{claim:lem-path-1} follows directly from Lemma~\ref{lem:length-bound}, and the second assertion of Claim~\ref{claim:lem-path-2} is an immediate consequence of Lemma~\ref{lem:box-bound}. It suffices to prove the first assertion of Claim~\ref{claim:lem-path-2}. Suppose that $\beta^{-1}$ and $N$ are sufficiently large (depending on $\varrho$) so that Lemma~\ref{lem:weight} holds. We claim that there exists a constant $c = c(\beta)>0$ such that every self-avoiding connected path $P$ at level $k$ with length $L \geq N^{k/2}$ satisfies
    \begin{equation}\label{eq:weight-exponential}
        w(P) \geq c^L.
    \end{equation}
    Let $c = \beta u$ for some $u \in (0,1)$. For $0 \leq j \leq k$, let $a_j$ be the number of boxes from $\mathcal{S}_j$ in $P$. In fact, by Lemma~\ref{lem:box-bound},
    $$
    (\beta u)^{-L} w(P) = (\beta u)^{-L} \prod_{j=0}^k (\beta N^{-8j})^{a_j}  \geq u^{-L} \prod_{j=1}^k (N^{-8j})^{2^{k-j} + \frac{C}{(N - C)^j} L}.
    $$
    Note that $\prod_{j=1}^\infty (N^{-8j})^{\frac{C}{(N - C)^j}}$ is lower-bounded by a constant that may depend on $\varrho$, and 
    $$
    \prod_{j=1}^k (N^{-8j})^{2^{k-j}} \geq \exp (-o_N(1) N^{k/2})),
    $$
    so~\eqref{eq:weight-exponential} holds if $u$ is sufficiently small. The conclusion then follows from Lemma~\ref{lem:weight} and~\eqref{eq:weight-exponential}.
\end{proof}

\section{Finding the Sierpi\'nski carpet: Proof of Proposition~\ref{prop:find-carpet}}\label{sec:carpet}

Suppose that the origin is $(m,0)$-good for all $m \geq 0$. We will find a topological Sierpi\'nski carpet in $\mathcal{K}$ similar to~\eqref{eq:fractal-carpet-1} in Proposition~\ref{prop:carpet-fractal}. Recall the setting in Section~\ref{subsec:disconnectivity}.

For $w \in V_n$, we say that $w$ is \textbf{$(\infty, n)$-bad} if there exists an integer $m\geq 0$ (which may depend on $w$) such that $w$ is $(m,n)$-bad. Otherwise, we call it \textbf{$(\infty, n)$-good}. This is similar to the condition $\infty$-good in Proposition~\ref{prop:carpet-fractal}. Also observe that every $(0,n)$-bad vertex is automatically $(\infty, n)$-bad, and the origin is $(\infty, 0)$-good. 

Let $J_n$ denote the set of $(\infty, n)$-bad vertices in $V_n$. We claim that for $n \geq 0$, if $x \in V_n$ is $(\infty, n)$-good, then the $(\infty, n+1)$-bad vertices in $B(x,\epsilon^n)$ do not percolate in the sense in Definition~\ref{def:(m,n)-good}. Specifically, 
\begin{equation}\label{eq:sec6-1}
    \mbox{all connected components in } \cup_{w \in B(x,\epsilon^n) \cap J_{n+1}} \overline{B(w, \epsilon^{n+1})} \mbox{ have } |\cdot|_\infty \mbox{-diameter at most } \tfrac{1}{4} \epsilon^n.
\end{equation}
In fact, for each $m \geq 0$, since $x$ is $(m,n)$-good, by Definition~\ref{def:(m,n)-good}, the boxes in $\{ \overline{B(w, \epsilon^{n+1})} \}_{w \in B(x,\epsilon^n) \cap V_{n+1}}$ for which $w$ is either $(m-1, n+1)$-bad or $(0, n+1)$-bad do not form connected components with $|\cdot|_\infty$-diameter greater than $\frac{1}{4} \epsilon^n$. Recall from Observation~\eqref{claim:remark-mn-1} of Remark~\ref{rmk:(m,n)-good} that $(m,n)$-good condition for $m \ge 1$ becomes sharper as $m$ increases. Taking $m$ to infinity yields~\eqref{eq:sec6-1}.

Next, we follow the strategy of Proposition~\ref{prop:carpet-fractal} to find a topological Sierpi\'nski carpet in $\mathcal{K}$; see Figure~\ref{fig:fractal}. Let $\mathcal{K}_0 = [-1/2,1/2]^2$. We then define $\mathcal{K}_1$ by removing from $\mathcal{K}_0$ a collection of boxes as follows. We first remove from $\mathcal{K}_0$ all boxes of the form $\overline{B(x, \epsilon)}$ for which $x \in \epsilon \mathbb{Z}^2 \cap (-3/4,3/4)^2$ and $x$ is $(\infty, 1)$-bad. When two $\epsilon$-boxes intersect exactly at one point, we further remove from $\mathcal{K}_0$ the other two $\epsilon^2$-boxes incident to that point. Finally, we remove all remaining connected components that are surrounded by removed boxes and let $\mathcal{K}_1$ be the closure of the resulting set. The sets $\{\mathcal{K}_n\}_{n \geq 1}$ are defined analogously. Observe that this is a decreasing sequence of closed sets.

Let $\mathcal{K}_\infty = \cap_{n \geq 1} \mathcal{K}_n$. We will show that 
\begin{equation}\label{eq:sec6-carpet}
    \mathcal{K}_\infty \mbox{ is a topological Sierpi\'nski carpet.}
\end{equation}
Let $\{Q_i\}_{i \geq 1}$ be the complementary connected components of $\mathcal{K}_\infty$. We first observe that for any $k \ge 1$ and a complementary component $\mathcal Q^k$ of $\mathcal{K}_k$, if we denote by $\mathcal Q^{k+m}$ the complementary component of $\mathcal{K}_{k+m}$ that contains $\mathcal Q^k$, then $\cup_{m \ge 0} \mathcal Q^{k+m}$ is a complementary component of $\mathcal{K}_\infty$. Conversely, for each complementary component $Q_i$ of $\mathcal{K}_\infty$, there exists a positive integer $k$ such that $Q_i$ contains a complementary component of $\mathcal{K}_k$. The crucial observation for proving~\eqref{eq:sec6-carpet} is that the evolutions of two complementary components of $\mathcal{K}_k$ can never touch each other. In fact, if $\mathcal Q_1^k$ and $\mathcal Q_2^k$ are two complementary components of $\mathcal{K}_k$, then $d(\mathcal Q_1^k,\mathcal Q_2^k) \ge \epsilon^k(1-2 \epsilon )$ (recall that $\epsilon$ is a dyadic constant so $1/2$ is an integer multiple of $\epsilon^k$). This is because before possibly removing some $\epsilon^{k+1}$-boxes at the corner, the $|\cdot|_\infty$-distance between two different components is at least $\epsilon^k$, and such removal operation reduces the distance by no more than $2 \epsilon^{k+1}$.
    For $m \ge 0$, let $\mathcal Q_1^{k+m}$ (resp.\ $\mathcal Q_2^{k+m}$) be the complementary connected components of $\mathcal{K}_{k+m}$ that contains $\mathcal Q_1^k$ (resp.\ $\mathcal Q_2^k$). By~\eqref{eq:sec6-1}, for each $\epsilon^k$-box that intersects $\mathcal{K}_k$, the connected components of boxes centered at $(\infty, k+1)$-bad vertices in it have $|\cdot|_\infty$-diameter at most $\frac{1}{4}\epsilon^k$. This in particular implies that all the connected components of boxes centered at $(\infty, k+1)$-bad vertices contained in $\mathcal{K}_k$ have $|\cdot|_\infty$-diameter at most $\frac{1}{4}\epsilon^k$.
    Therefore, we have
    \[ d(\mathcal Q_1^{k+1},\mathcal Q_2^{k+1}) \ge d(\mathcal Q_1^k,\mathcal Q_2^k)-2 \times \tfrac{1}{4} \epsilon^k - 2 \times \epsilon^{k+2} \ge \epsilon^k (1- 2 \epsilon - \tfrac{1}{2} - 2\epsilon^2), \]
    where the additional $2 \times \epsilon^{k+2}$ comes from the potential removal of some $\epsilon^{k+2}$-boxes at the corner in the construction of $\mathcal{K}_{k+1}$. In general, we can prove inductively that
    \[ d(\mathcal Q_1^{k+m},\mathcal Q_2^{k+m}) \ge \epsilon^k(1-2 \epsilon - (\tfrac{1}{2} + 2\epsilon^2) \sum_{i=0}^{m-1} \epsilon^i), \quad \mbox{for all } m \ge 1. \]
    This implies that $d(\mathcal Q_1^{k+m},\mathcal Q_2^{k+m}) \ge \epsilon^k/3$ for all $m \ge 0$, so $\cup_{m \ge 0} \mathcal Q_1^{k+m}$ and $\cup_{m \ge 0} \mathcal Q_2^{k+m}$ have disjoint closures.
    Therefore, for each $i\neq j$, we can find a positive integer $k$ such that $Q_i$ (resp.\ $Q_j$) contains a \emph{unique} complementary connected component $\mathcal Q_i^k$ (resp.\ $\mathcal Q_j^k$) of $\mathcal{K}_k$, and hence $Q_i=\cup_{m \ge 0} \mathcal Q_i^{k+m}$ and $Q_j=\cup_{m \ge 0} \mathcal Q_j^{k+m}$ have disjoint closures. A similar argument shows that each $Q_i$ is a simply connected domain whose boundary is a simple continuous curve; hence, each $Q_i$ is a Jordan domain.
    Moreover, since $\eta$ has Hausdorff dimension $1+\kappa/8$~\cite{Beffara-dimension}, the set $\mathcal{K}_\infty$ has no interior points. By Whyburn's criterion~\cite{Whyburn-carpet}, the set $\mathcal{K}_\infty$ is a topological Sierpi\'nski carpet. Since $\mathcal{K} \supset \mathcal{K}_\infty$ (Lemma~\ref{lem:flow-line-not-touch}), we conclude the proof. \qed
    
\section{Conformal welding problem for SLE}\label{sec:welding}

In this section, we explain how Theorem~\ref{thm:carpet} implies that the conformal welding problem for SLE almost surely does not have a unique solution (Theorem~\ref{thm:welding}).

We first give a precise definition of conformal welding. Let $\mathbb D_1$ be the unit disk and $\mathbb D_2 = \widehat{\mathbb C} \setminus \overline{\mathbb D}_1$. Given an oriented Jordan loop $\eta$ on $\widehat{\mathbb C}$, let $D_1$ and $D_2$ denote the left and right connected components of $\widehat{\mathbb C} \setminus \eta$, respectively. For $i=1,2$, let $\psi_i$ be a conformal map from $\mathbb D_i$ to $D_i$. By Carath\'eodory's theorem, $\psi_i$ extends continuously to the boundary of $\mathbb D_i$, hence $\phi:=\psi_2^{-1} \circ \psi_1:\partial \mathbb D_1 \to \partial \mathbb D_2$ defines an orientation-preserving homeomorphism from $\mathbb S^1$ to itself. We refer to $\phi$ as a \emph{conformal welding homeomorphism} for $D_1$ and $D_2$ with interface $\eta$. Note that $\phi$ is only identified up to pre- and post-composition with automorphism of $\mathbb D$. Moreover, the image of $\eta$ under any M\"obius transformation would yield the same welding homeomorphism as $\eta$ itself. Therefore, the solution to the conformal welding problem is characterized by the map
\[ \mathcal W: \mathcal L/{\sim_{\text{M\"ob}}} \to \mathrm{Homeo}^+(\mathbb S^1)/{\sim_{\mathrm{Aut}}}, \quad [\eta] \mapsto [\phi], \]
where $\mathcal L/{\sim_{\text{M\"ob}}}$ is the collection of oriented Jordan loops on $\widehat{\mathbb C}$ under the equivalence relation that $\eta_1 \sim_{\text{M\"ob}} \eta_2$ if $\eta_2=\omega(\eta_1)$ for some M\"obius transformation $\omega$, and $\mathrm{Homeo}^+(\mathbb S^1)/{\sim_{\mathrm{Aut}}}$ is the set of orientation-preserving homeomorphism from $\mathbb S^1$ to itself under the equivalence relation that $\phi_1 \sim_{\mathrm{Aut}} \phi_2$ if $\phi_1=\rho_1 \circ \phi_2 \circ \rho_2$ for $\rho_i \in \mathrm{Aut}(\mathbb D_i)$. Note that $\mathcal{W}$ is neither surjective nor injective; see, e.g.,~\cite{Bishop-welding}.

Conformal removability is related to the uniqueness of conformal welding, i.e., injectivity of $\mathcal W$. It is known that conformal removability of $\eta$ implies the uniqueness of conformal welding (see, e.g.,~\cite[Corollary 5.24]{Younsi-survey}). However, it remains an open question whether the converse is true, i.e., whether conformal non-removability guarantees the non-uniqueness of conformal welding. Indeed, if $\eta$ is conformally non-removable, then there exists a homeomorphism $f:\widehat{\mathbb C} \to \widehat{\mathbb C}$ which is conformal on $\widehat{\mathbb C} \setminus \eta$ but not on all of $\widehat{\mathbb C}$. Then, pushing forward $\psi_i$ by $f$ would yield a new solution to conformal welding with interface $f(\eta)$. However, it is still possible that $f(\eta)=\omega(\eta)$ for some M\"obius transformation $\omega$; see~\cite{younsi-counterexample}.

Pioneered by Sheffield's work~\cite{Sheffield-zipper}, it was shown in~\cite{AHS-loop, Baverez-welding} that SLE$_\kappa$ loop with $\kappa \in (0,4]$ arises from conformal welding. Specifically, if one endows $\mathbb{D}_1$ and $\mathbb{D}_2$ with $\gamma$-LQG ($\gamma=\sqrt{\kappa}$) quantum disks and defines the homeomorphism $\phi$ by identifying the LQG quantum boundary length on $\mathbb S^1$, then a conformal welding exists and the welding interface has the same law as SLE$_\kappa$ loop measure. On the other hand, using the conformal removability of SLE$_\kappa$~\cite{JS-removable, Rohde-Schramm-basic, KMS-removability-SLE4}, we have that the conformal welding is almost surely unique for these welding homeomorphisms, i.e., the pre-image of $\phi$ under $\mathcal W$ is almost surely unique. Later, it was shown in~\cite{DMS21-LQG-MRT} that SLE$_\kappa$ for $\kappa \in (4,8)$ also arises from conformal welding. Since SLE$_\kappa$ for $\kappa \in (4,8)$ is a random non-simple curve that touches itself, we need to first extend the definition of conformal welding to beaded domains as defined below.

We call a connected closed set $A \subset \widehat{\mathbb C}$ a \emph{beaded domain} if $\widehat{\mathbb C} \setminus A$ is simply connected and there exists a continuous non-self-crossing loop $\mathcal{L}$ such that $A$ is the union of $\mathcal{L}$ and the set of points enclosed by $\mathcal{L}$. Moreover, each connected component of the interior of $A$, together with its prime-end boundary, is homeomorphic to a closed disk. For two beaded domains $B_1$ and $B_2$, we parameterize their corresponding loops by $\mathbb S^1$ and define a homeomorphism $\phi: \mathbb{S}^1 \to \mathbb{S}^1$. A (non-simple) conformal welding with welding homeomorphism $\phi$ consists of a loop $\eta$ on $\widehat{\mathbb{C}}$ together with two homeomorphisms $\psi_i$ from the left and right connected components of $\widehat{\mathbb C} \setminus \eta$, together with the prime-end boundary, to $B_i$ such that $\psi_i$ is conformal on each connected component of its interior and, moreover, $\phi = \psi_2 \circ \psi_1^{-1}$.

It was shown in~\cite{ACSW-loop} based on~\cite{DMS21-LQG-MRT} that SLE$_\kappa$ loop for $\kappa \in (4,8)$ arises from conformal welding for beaded domains. Specifically, define two random beaded domains using the stable looptree~\cite{Nicolas-looptree} obtained by $\alpha$-stable L\'evy process with $\alpha = \kappa/4$ and replace the interior of each loop with conditionally independent $\gamma$-LQG ($\gamma = \sqrt{16/\kappa}$) quantum disks. If one defines a homeomorphism by identifying the generalized quantum boundary length on these two beaded domains, then a conformal welding exists and the welding interface has the same law as SLE$_\kappa$ loop measure. 

When $\eta$ is an SLE$_\kappa$-type curve for $\kappa \in (8 - \delta_0,8)$, it is enough to deduce from Theorem~\ref{thm:carpet} the non-uniqueness of conformal welding. This is based on an explicit construction of $f$ from~\cite{Ntalampekos-carpet}.

\begin{proof}[Proof of Theorem~\ref{thm:welding}]
    It suffices to show that for $\kappa \in (8 - \delta_0, 8)$ and an SLE$_\kappa$ loop $\eta$, there almost surely exists a homeomorphism $f:\widehat{\mathbb C} \to \widehat{\mathbb C}$ which is conformal on $\widehat{\mathbb C} \setminus \eta$ but not on all of $\widehat{\mathbb C}$, and $f(\eta) \neq \omega(\eta)$ for all M\"obius transformation $\omega$. By Theorem~\ref{thm:carpet}, $\eta$ almost surely contains a topological Sierpi\'nski carpet. Let $S$ be such a carpet contained in $\eta$, and let $S'$ be any topological Sierpi\'nski carpet in $[0,1]^2$ with positive Lebesgue measure\footnote{For instance, in the first step, we divide $[0,1]^2$ into 9 congruent squares and remove the central one. In the $k$-th step ($k \ge 1$), we divide each remaining square into $(2k+1)^2$ congruent squares and remove the central one. The final set is a topological Sierpi\'nski carpet with area $\prod_{k=1}^\infty (1-1/(2k+1)^2)=\pi/4$.}. By~\cite[Theorem 1.3]{Ntalampekos-carpet}, there exists a homeomorphism $f:\widehat{\mathbb{C}} \to \widehat{\mathbb{C}}$ such that $f(S) = S'$ and $f$ is conformal on $\widehat{\mathbb{C}} \setminus S$. Since $\eta \supset S$, we have $f(\eta) \supset f(S) = S'$ and $f$ is conformal on $\widehat{\mathbb C} \setminus \eta$. Since $\eta$ almost surely has Lebesgue measure zero and the image of a set of Lebesgue measure zero under M\"obius transformations still has Lebesgue measure zero, it follows that $f(\eta) \neq \omega(\eta)$ for all M\"obius transformation $\omega$. This also implies that $f$ is not a M\"obius transformation, and hence not conformal on all of $\widehat{\mathbb{C}}$.
\end{proof}

\section{Open problems}\label{sec:open}

In this section, we discuss some open problems.

\subsubsection*{Fractal percolation}

We begin with some problems in fractal percolation that we find fundamental but to the best of our knowledge are not well understood. Recall from Proposition~\ref{prop:super-sierpinski} that we showed that when $p > p_c$, the final retained set $A_\infty$ in fractal percolation contains a topological Sierpi\'nski carpet. We conjecture that this also holds at the critical value $p = p_c$.

\begin{prob}\label{prob:critical-TSC}
    Prove that when $p = p_c$, the final retained set in fractal percolation contains a topological Sierpi\'nski carpet with positive probability.
\end{prob}

A topological Sierpi\'nski carpet can be found if condition~\eqref{property:strong-disconnect} holds for $p \ge p_c$. In that case, removing the fillings of outermost removed box clusters yields the largest possible topological Sierpi\'nski carpet contained in $A_\infty$. In Proposition~\ref{prop:super-sierpinski}, we were only able to show that \eqref{property:strong-disconnect} holds for all $p \in (0,1)$ except for a countable set. We conjecture that it actually holds for all $p \in (0,1)$.

\begin{prob}\label{prob:fractal-strong-disconnect}
    Prove that for any $p \in (0,1)$, a.s., the outer boundaries of any two different removed box clusters do not intersect.
\end{prob}

If Problem~\ref{prob:fractal-strong-disconnect} can be solved at $p = p_c$, the argument in Proposition~\ref{prop:super-sierpinski} would yield a positive answer to Problem~\ref{prob:critical-TSC}.

The Brownian loop soup is an analogous model to fractal percolation and similar problems have been answered affirmatively~\cite{SW-CLE}, as we now recall. Sample a Poissonian random collection of Brownian loops in the unit disk $\mathbb D$ with intensity $c>0$. One can view Brownian loops as analogous to removed boxes in fractal percolation. Two Brownian loops are considered connected if they can be linked by a finite chain of intersecting loops, thereby forming Brownian loop soup clusters. It was shown in~\cite{SW-CLE} that when $c > 1$, there is a single Brownian loop soup cluster whose closure is $\overline{\mathbb D}$, while for $c \in (0,1]$, there are infinitely many Brownian loop coup clusters whose boundaries are disjoint. Moreover, if one considers the set of points not surrounded by any Brownian loop soup cluster---equivalently, the region obtained by removing from $\overline{\mathbb{D}}$ the fillings of all outermost Brownian loop soup clusters---then the resulting set has the law of a CLE$_\kappa$ carpet for $\kappa \in (8/3,4]$, where $\kappa$ is determined by the relation $c = (3 \kappa - 8)(6 - \kappa)/ 2 \kappa$. By~\cite{SW-CLE} and Whyburn's criterion~\cite{Whyburn-carpet}, CLE$_\kappa$ carpet is a topological Sierpi\'nski carpet.

The key properties of Brownian loop soup used in~\cite{SW-CLE} are the domain Markov property and conformal invariance. While the latter is absent in fractal percolation---so the arguments of~\cite{SW-CLE} do not directly apply---their results give support to Problems~\ref{prob:critical-TSC} and~\ref{prob:fractal-strong-disconnect}.

\subsubsection*{Conformal removability and bubble connectivity of SLE$_\kappa$}

As mentioned in Section~\ref{sec:intro}, conformal removability (or non-removability) is essential in the study of SLE$_\kappa$ via conformal welding.
    
\begin{prob}
    Prove Conjecture~\ref{conj:removability}.
\end{prob}

\begin{prob}
    Determine the value of $\kappa_c$ in Conjecture~\ref{conj:removability}.
\end{prob}

One might hope to adapt the argument in Proposition~\ref{prop:super-sierpinski} to prove Conjecture~\ref{conj:removability}. The main missing ingredient is the monotonicity of SLE$_\kappa$ with respect to $\kappa$.
We hope that the mating-of-trees theory~\cite{DMS21-LQG-MRT} may help establish such properties; see~\cite{GP20-connectivity} for its application in proving the connectivity of the adjacency graph of SLE$_\kappa$ for $\kappa \in (4,\kappa_0)$, where $\kappa_0 \approx 5.6158$. In addition, it would be interesting to determine whether SLE$_6$ is conformally removable, as it describes the scaling limit of Bernoulli percolation~\cite{Smirnov-per} and enjoys additional nice properties, for instance, locality. Let us also mention that analogous questions for Brownian motion are also open: Consider a planar Brownian motion with unit time duration. Werner~\cite[Open problem 4]{MP-brownian-motion-book} asks whether the adjacency graph of the complementary connected components is connected. It is also unknown whether the trace of Brownian motion is conformally removable.

Even if Conjecture~\ref{conj:removability} were proved, additional arguments would be needed to understand the critical case $\kappa = \kappa_c$. In light of the results for Brownian loop soup~\cite{SW-CLE}, we conjecture that the scenario~\eqref{conj:case2} in Conjecture~\ref{conj:removability} occurs for $\kappa = \kappa_c$; see also Footnote~\ref{footnote:gk-open}.

\begin{prob}
    Let $\kappa_c$ be as defined in Conjecture~\ref{conj:removability}. Prove that the adjacency graph of the complementary connected components of SLE$_{\kappa_c}$ is disconnected, and that the trace of SLE$_{\kappa_c}$ contains a topological Sierpi\'nski carpet.
\end{prob}

In the regime where the adjacency graph of the complementary connected components of SLE$_\kappa$ is disconnected, we may introduce a stronger notion analogous to Problem~\ref{prob:fractal-strong-disconnect}. We say the adjacency graph is \emph{strongly disconnected} if the closures of different connected components of bubbles are pairwise disjoint.

\begin{prob}\label{prob:sle-strong-disconnect}
    Let $\kappa_c$ be as defined in Conjecture~\ref{conj:removability}. Prove that the adjacency graph of the complementary connected components of SLE$_\kappa$ is strongly disconnected for $\kappa \in (\kappa_c, 8)$.
\end{prob}

If Problem~\ref{prob:sle-strong-disconnect} can be solved, then one can obtain the largest possible topological Sierpi\'nski carpet in the trace of SLE$_\kappa$ by removing from the plane the interiors of fillings of outermost connected components of bubbles. It would be interesting to study its fractal properties, for instance, its Hausdorff dimension.

\subsubsection*{Conformal removability and bubble connectivity of exotic SLE$_{\underline{\kappa}}(\rho)$}

For $\underline{\kappa} \in (0,4)$ and $\rho > -2 - \underline{\kappa}/2$, as with ordinary SLE$_{\underline{\kappa}}$, one can define the SLE$_{\underline{\kappa}}(\rho)$ process via the Loewner evolution, except that we keep track of an additional force point. By convention, the force point is always positioned to the right of the tip. As shown in~\cite{IG1, MS-light-cone, CLEpercolation}, SLE$_{\underline{\kappa}}(\rho)$ is almost surely a continuous curve connecting the two boundary marked points. However, it exhibits several different phases depending on the value of $\rho$, as illustrated in Figure~\ref{fig:rhokappachart} (left). Conformal welding theory for SLE$_{\underline{\kappa}}(\rho)$ was first established in the ordinary regime $\rho>-2$~\cite{Sheffield-zipper, DMS21-LQG-MRT} and has been further extended to the exotic regime $\rho<-2$~\cite{MSW22-simple-CLE-LQG,KM24-SLE-light-cone-LQG}. Let $\eta$ be an SLE$_{\underline{\kappa}}(\rho)$ from 0 to $\infty$ in $\mathbb{H}$ with force point at $0^+$. We will consider whether $\eta$ is conformally removable and whether the adjacency graph of the connected components of $\mathbb{H} \setminus \eta$ is connected.

\begin{figure}
    \centering
    \includegraphics[width=0.96\linewidth]{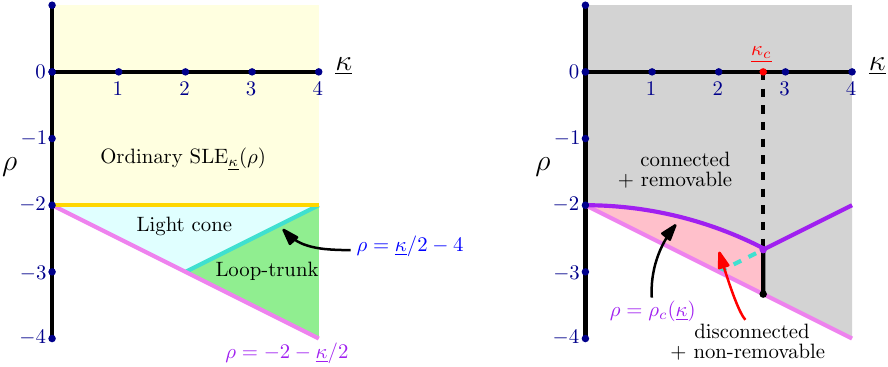}
    \caption{\textbf{Left:} Phase diagram of SLE$_{\underline{\kappa}}(\rho)$ for $\underline{\kappa} \in (0,4)$. The $\rho>-2$ regime corresponds to ordinary SLE$_{\underline{\kappa}}(\rho)$. The light cone and loop-trunk regimes are studied in~\cite{MS-light-cone} and~\cite{CLEpercolation}, respectively. \textbf{Right:} Conjectured phase diagram of SLE$_{\underline{\kappa}}(\rho)$ in terms of bubble connectivity and conformal removability. In the light cone regime, there exists a curve $\underline{\kappa} \mapsto \rho_c(\underline{\kappa})$, above which the adjacency graph of the complementary components of SLE$_{\underline{\kappa}}(\rho)$ is connected, and below which it is disconnected. In the loop-trunk regime, the connectivity property of SLE$_{\underline{\kappa}}(\rho)$ does not depend on $\rho$ and is equivalent to that of SLE$_{16/\underline{\kappa}}$.}
    \label{fig:rhokappachart}
\end{figure}

When $\rho>-2$, $\eta$ is a simple curve and is absolutely continuous with respect to ordinary SLE$_{\underline{\kappa}}$ away from the boundary, which implies that $\eta$ is conformally removable~\cite{DMS21-LQG-MRT} and that the adjacency graph is connected. When $\rho \in (-2 - \underline{\kappa}/2,-2) \cap [\underline{\kappa}/2-4,-2)$, SLE$_{\underline{\kappa}}(\rho)$ is a non-simple curve and admits a construction via GFF light cones~\cite{MS-light-cone}, with opening angle that increases with $\rho$. As a consequence, for fixed $\underline{\kappa}$, the connectivity of the adjacency graph of $\mathbb{H} \setminus \eta$ is decreasing in $\rho$, and there exists a (possibly trivial) phase transition. Moreover, using the argument in Proposition~\ref{prop:super-sierpinski}, we expect that in the disconnected phase, the trace contains a topological Sierpi\'nski carpet. When $\rho$ tends to $-2$ from below, the opening angle tends to 0, and it is likely that the adjacency graph of $\mathbb{H} \setminus \eta$ is connected. When $\rho$ tends to $-2-\underline{\kappa}/2$ from above for $\underline{\kappa} \in (0,2]$, the Hausdorff dimension of $\eta$ tends to 2~\cite{light-cone-dimension}, and we expect that the method in this paper can show that the adjacency graph is disconnected. In light of~\cite{KMS-removability-SLE4, KMS-removability-nonsimple, Ntalampekos-carpet}, we have the following conjecture. 

\begin{conjecture}\label{conj:exotic-1}
    For each $\underline{\kappa} \in (0,4)$, there exists $\rho_c = \rho_c(\kappa) \in (-2-\underline{\kappa}/2,-2) \cap [\underline{\kappa}/2-4,-2) $ such that the following holds. Let $\eta$ be an SLE$_{\underline{\kappa}}(\rho)$ curve from $0$ to $\infty$ on $\mathbb{H}$ with force point always located to the right of the tip.
    \begin{enumerate}
        \item For $\rho \in (\rho_c, -2)$, almost surely, the range of $\eta$ is conformally removable and the adjacency graph of $\mathbb{H} \setminus \eta$ is connected.

        \item For $\rho \in (-2-\underline{\kappa}/2,\rho_c) \cap [\underline{\kappa}/2-4,\rho_c)$, the range of $\eta$ is conformally non-removable, the adjacency graph of $\mathbb{H} \setminus \eta$ is disconnected, and the range of $\eta$ contains a topological Sierpi\'nski carpet.
    \end{enumerate}
\end{conjecture}

For $\underline{\kappa} \in (2,4)$ and $\rho = \underline{\kappa}/2 - 4$, the range of SLE$_{\underline{\kappa}}(\rho)$ has the same law as the range of SLE$_{\kappa}(\kappa/2-4)$ with force point at $0^-$, where $\kappa=16/\underline{\kappa}$~\cite{MS-light-cone}. Note that SLE$_{\kappa}(\kappa/2-4)$ is absolutely continuous with respect to SLE$_{\kappa}$ away from the boundary. By~\cite{GP20-connectivity}, for $\kappa \in (4, \kappa_0)$, where $\kappa_0 \approx 5.6158$, the adjacency graph of SLE$_\kappa$ is connected, which implies that $\rho_c(\underline{\kappa}) = \underline{\kappa}/2-4$ for $\underline{\kappa} \in (16/\kappa_0, 4)$. In light of this, we have the following conjecture for the behavior of $\rho_c$ and its relation to conformal removability and bubble connectivity of SLE$_\kappa$. In particular, taken together with Conjecture~\ref{conj:exotic-1}, it would imply Conjecture~\ref{conj:removability} with $\kappa_c = 16/\underline{\kappa_c}$.

\begin{conjecture}\label{conj:exotic-2}
    The function $\underline{\kappa} \mapsto \rho_c(\underline{\kappa})$ is continuous for $\underline{\kappa} \in (0,4)$. There exists $\underline{\kappa_c} \in (2,4)$ such that $\rho_c(\underline{\kappa}) > \underline{\kappa}/2 - 4$ for $\underline{\kappa} \in (2, \underline{\kappa_c})$ and $\rho_c(\underline{\kappa}) = \underline{\kappa}/2 - 4$ for $\underline{\kappa} \in (\underline{\kappa_c}, 4)$. Moreover, for $\kappa \in (4, 16/ \underline{\kappa_c})$, the SLE$_{\kappa}$ curve is almost surely conformally removable and the adjacency graph of its complementary is connected.
\end{conjecture}

Finally, we investigate the loop-trunk regime studied in~\cite{CLEpercolation} where $\underline{\kappa} \in (2,4)$ and $\rho \in (-\underline{\kappa}/2-2, \underline{\kappa}/2-4]$. As shown in Section 10.1.2 of~\cite{CLEpercolation}, the trace of SLE$_{\underline{\kappa}}(\rho)$ can be decomposed into a trunk and loops: the trunk has the same law as a SLE$_{\kappa}(-\kappa(\rho+2)/4-2)$ with force point at $0^-$, and loops that form on the left side of the trunk have the law of BCLE$_{\underline{\kappa}}(\rho + \underline{\kappa}/2)$ in each left complementary connected components. By construction, the adjacency graph of the complementary connected components of a BCLE$_{\underline{\kappa}}(\rho + \underline{\kappa}/2)$ is almost surely connected. Moreover, it follows from the argument in Section 3.5 of~\cite{DMS21-LQG-MRT} that BCLE$_{\underline{\kappa}}(\rho + \underline{\kappa}/2)$ is conformally removable. Therefore, we deduce the following proposition.

\begin{proposition}
    For $\underline{\kappa} \in (2,4)$ and $\rho \in (-2-\underline{\kappa}/2, \underline{\kappa}/2-4]$, SLE$_{\underline{\kappa}}(\rho)$ is conformally removable (resp.\ the adjacency graph is connected) if and only if SLE$_{16/\underline{\kappa}}$ is conformally removable (resp.\ the adjacency graph is connected).
\end{proposition}

In Figure~\ref{fig:rhokappachart} (right), we summarize the conjectured phase of SLE$_{\underline{\kappa}}(\rho)$. This conjectured phase diagram suggests that conformal removability is equivalent to the bubble connectivity property in most of the parameter space, except on critical lines.

\subsubsection*{Connectivity of other random fractals}

In a seminal work~\cite{Peres-intersection}, Peres showed that planar Brownian motion is intersection-equivalent to the final retained set of a certain fractal percolation process in which a $2^{-k}$-box is retained with probability $1/k$. This property allows one to show that the intersection of two independent planar Brownian motions is intersection-equivalent to the final retained set of a similar fractal percolation process, which in particular contains non-singleton connected components. Based on this, Peres~\cite{peres-brownian-intersection} posed the following question: For two independent planar Brownian motions run for unit time duration, does their intersection almost surely contain non-singleton connected components?

Motivated by this, we ask an analogous question for SLE$_\kappa$. 

\begin{prob}\label{prob:intersection-sle}
    For $\kappa \in (4,8)$ and an integer $n \geq 2$, let $\eta_1,\eta_2,\ldots, \eta_n$ be $n$ independent SLE$_\kappa$ curves from 0 to $\infty$ in $\mathbb{H}$. Does the intersection $\cap_{i=1}^n \eta_i$ contain non-singleton connected components? Here we view $\eta_i$ as the trace of SLE$_\kappa$, which is a closed subset of $\overline{\mathbb{H}}$.
\end{prob}

We note that when $\kappa \leq 4$, each curve is simple so their intersection is totally disconnected. When $\kappa \geq 8$, the SLE$_\kappa$ curves are space-filling, so their intersection has interior points. For the regime $\kappa \in (4,8)$, we first compute the Hausdorff dimension of $\cap_{i=1}^n \eta_i$. Since the Hausdorff dimension of a single SLE$_\kappa$ curve is $1 + \kappa/8$~\cite{Rohde-Schramm-basic, Beffara-dimension}, the intersection $\cap_{i=1}^n \eta_i$ is expected to have dimension $\max \{0, 2 - n(1 - \kappa/8) \}$. If the value is smaller than 1, the intersection is totally disconnected by~\cite[Proposition 3.5]{falconer-fractal-geometry}. This suggests that for $n \geq 3$ and $\kappa$ close to 4, the intersection of $n$ independent SLE$_\kappa$ curves should be totally disconnected. When $n = 2$ and $\kappa$ is close is 4, even though the Hausdorff dimension calculation is not sufficient, we still expect the intersection to be totally disconnected, since SLE$_\kappa$ converges to the simple curve SLE$_4$ as $\kappa \to 4$. Finally, let us comment that the fractional percolation strategy in Section~\ref{sec:main-proof} can be adapted to show that for any fixed $n \geq 2$ and $\kappa$ sufficiently close to 8, the intersection $\cap_{i=1}^n \eta_i$ contains non-singleton connected components (possibly even a topological Sierpi\'nski carpet). This reveals another phase transition in the behavior of SLE$_\kappa$ within the regime $\kappa \in (4,8)$.

The case $\kappa = 6$ may be of particular interest, as it describes the scaling limit of Bernoulli percolation~\cite{Smirnov-per}. The Hausdorff dimension calculation above suggests that the intersection of five or more independent SLE$_6$ curves does not contain non-singleton connected components.

\begin{prob}
    Does the intersection of two, three, or four independent SLE$_6$ curves contain non-singleton connected components?
\end{prob}

\subsubsection*{Intersection-equivalence for SLE}

Motivated by~\cite{Peres-intersection}, it is natural to ask whether SLE$_\kappa$ is intersection-equivalent to certain fractal percolation processes. This question is related to Problem 7.2 in~\cite{Schramm-ICM} which seeks a criterion for a closed set $A$ such that $\mathbb{P}[\eta \cap A \neq \emptyset] >0$, where $\eta$ is the trace of SLE.

\begin{prob}
    Does there exist a fractal percolation process intersection-equivalent to SLE$_\kappa$?
\end{prob}

\bibliographystyle{alpha}
\bibliography{ref}

\end{document}